\definecolor{darkred}{rgb}{0.8, 0.0, 0.0}
\colorlet{Color0}{Dandelion}
\colorlet{Color1}{red}
\colorlet{Color2}{Green}
\colorlet{Color3}{BlueViolet}
\definecolor{darkblue}{rgb}{0.0, 0.0, 0.8}
\definecolor{darkred}{rgb}{0.8, 0.0, 0.0}
\definecolor{darkgreen}{rgb}{0.3, 0.5, 0.15}
\newcommand{\R}{\mathbb{R}}
\newcommand{\K}{\mathbb{K}}
\newcommand{\M}{\mathbb{M}}
\newcommand{\vol}[1]{\mathrm{vol}(#1)}
\newcommand{\ellK}[1]{\ell_{#1}}
\newcommand{\tK}[1]{t_{#1}}
\newtheorem{theorem}{Theorem}[section]
\newtheorem{corollary}[theorem]{Corollary}
\newtheorem{proposition}[theorem]{Proposition}
\newtheorem{lemma}[theorem]{Lemma}
\theoremstyle{definition}
\newtheorem{definition}[theorem]{Definition}
\newtheorem{remark}[theorem]{Remark}
\newtheorem{example}[theorem]{Example}
\newtheorem{convention}[theorem]{Convention}
\newtheorem{notation}[theorem]{Notation}
\title{Limit Theorems for Verbose Persistence Diagrams}
\author[1]{Jeong-hwi Joe}
\author[2]{Woojin Kim}
\author[3]{Cheolwoo Park}
\affil[1,2,3]{Department of Mathematical Sciences, KAIST, South Korea\thanks{\texttt{jhjoe@kaist.ac.kr},  
\texttt{woojin.kim@kaist.ac.kr}, 
\texttt{parkcw2021@kaist.ac.kr}}}
\date{\today}
\begin{document}


\allowdisplaybreaks

\maketitle

\begin{abstract}
    The persistence diagram is a central object in the study of persistent homology and has also been investigated in the context of random topology. The more recent notion of the \emph{verbose diagram} (a.k.a. verbose barcode) is a refinement of the persistence diagram. 
    Whereas the persistence diagram is a complete invariant of persistent homology, the verbose persistence diagram is a complete invariant of the one-level higher object---a filtered chain complex. It therefore strictly contains the persistence diagram, both in form and in the amount of information it encodes. Concretely, the verbose diagram incorporates ephemeral features that arise in a filtered topological space, representing them as additional points along the diagonal. 

In this work, we initiate the study of \emph{random} verbose diagrams. We establish a strong law of large numbers for verbose diagrams as a random point cloud grows in size---that is, we prove the existence of a limiting verbose diagram, viewed as a measure on the half-plane on and above the diagonal. Also, we characterize its support and compute its total mass.  Along the way, we extend the notion of the persistent Betti number,    reveal the relation between this extended notion and the verbose diagram (which is an extension of the fundamental lemma of persistent homology), and establish results on the asymptotic behavior of the extended persistent Betti numbers.
    
    This work extends the main results of the work by Hiraoka, Shirai, and Trinh and its sequel by Shirai and Suzaki to the setting of verbose diagrams.
\end{abstract}
\bigskip
\noindent\textbf{2020 Mathematics Subject Classification.}  60K35, 60B10, 55N31.

\section{Introduction}

This work bridges two lines of research, which are reviewed below.

\paragraph{Random topology and persistence diagrams.}

The first line of research stems from \emph{random topology}, which studies the topological features of randomly generated spaces.
A common choice for such spaces is a simplicial complex, with various models of generation (see \cite{bobrowskietal2022}).
Among them, random geometric complexes~\cite{kahle2011} extend the idea of random geometric graphs~\cite{penrose2003} through the following typical construction: fix a threshold $r \geq 0$ and start with a random subset $\Xi$ of a Euclidean space. Then, consider the simplicial complex consisting of subsets $\sigma \subseteq \Xi$ whose ``size'' is at most $r$, where ``size'' may be defined in different ways, for example, as the diameter of $\sigma$ (giving the \emph{Vietoris--Rips complex}) or as the radius of its smallest enclosing ball (giving the \emph{\v{C}ech complex}).

Now, recall that \emph{homology} from algebraic topology captures ``holes" of different dimensions in a simplicial complex, such as connected components (0-dim.), loops (1-dim.), cavities (2-dim.), and their high-dimensional analogues.
Accordingly, the topology of a random geometric complex can be studied via its homology, and the number of $q$-dimensional holes---known as the $q$th Betti number $\beta_q$---becomes a random variable.
Among other directions, analyzing the asymptotic behavior of Betti numbers as the 
cardinality of $\Xi$ grows has been an active area of research \cite{bobrowski2017, bobrowski2022, owada2017, yogeshwaran2017}.

When homology is applied to the analysis of discrete data, \emph{persistent homology} arises; it is a central tool in topological data analysis (TDA), with numerous applications~\cite{carlsson2009topology, carlsson2021topological, dey2022computational}.
In persistent homology, one typically starts with data given as a \emph{point cloud} $\Xi$ (i.e. a finite set of points in a Euclidean space) and builds a family $\K(\Xi)=\{K_t\}_{t\in [0,\infty)}$ of simplicial complexes such that for $r\leq s$ in $[0,\infty)$, we have $K_r\subset K_s$. This $\K(\Xi)$ is called a \emph{(simplicial) filtration}. For each $q=0,1,\ldots$, the $q$-th homological features of $\K(\Xi)$ is summarized into the $q$-th \emph{persistence diagram}, which records the births and deaths of $q$-dimensional holes in $\K(\Xi)$ (see Figure~\ref{fig: filtration-diagram}).  
Also, 
the $q$-th Betti number $\beta_q$ is refined to the \emph{persistent Betti number} $\beta_q^{r, s}$ for $r\leq s$ $[0,\infty)$, 
which counts the number of $q$-dimensional holes that persist from $t = r$ to $t = s$ and can be read from the persistence diagram (see Figure~\ref{subfig: PBN}).

\begin{figure}
    \centering
    \includegraphics[width=0.6\linewidth]{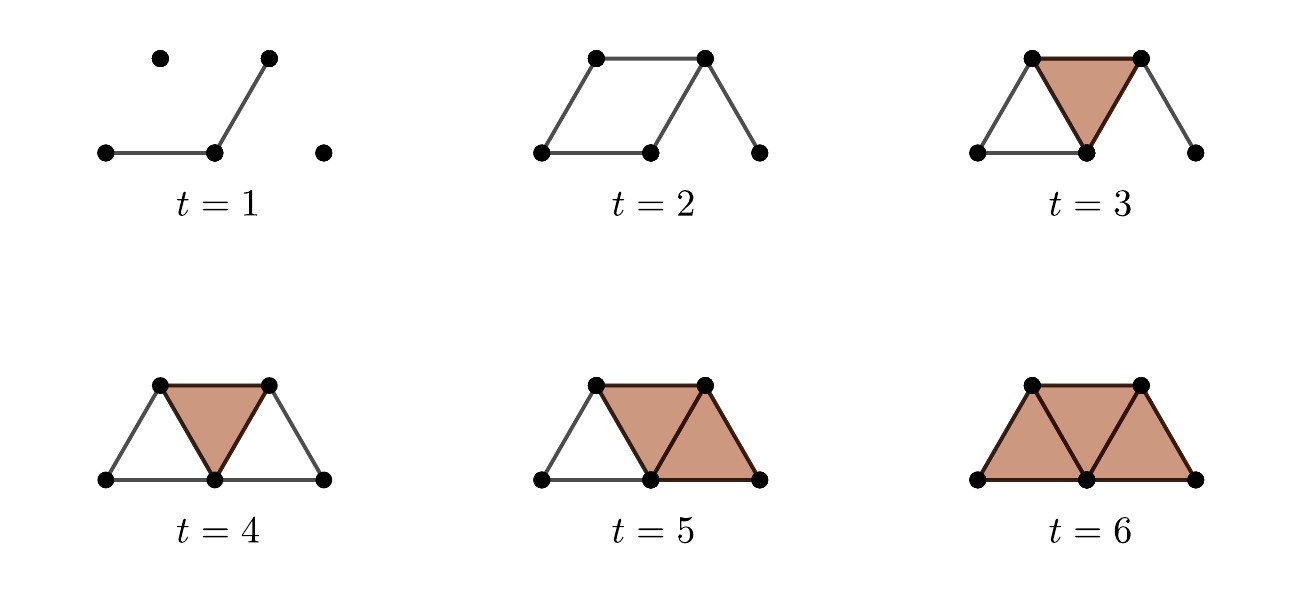}
    \includegraphics[width=0.3\linewidth]{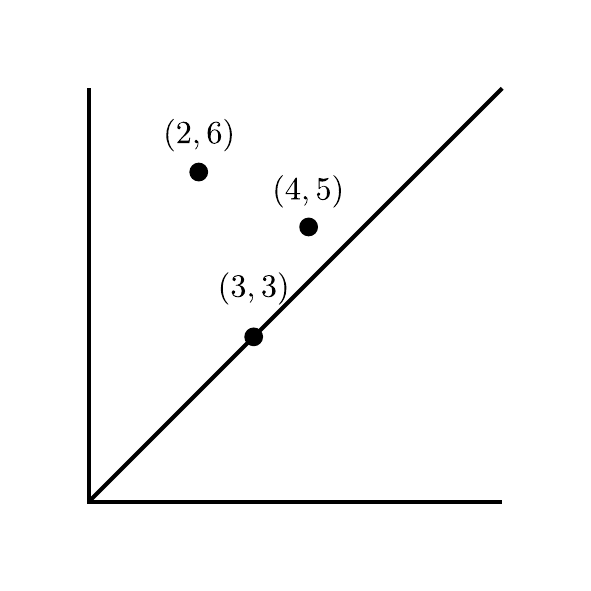}
    \caption{
 Assume that the simplicial filtration $\K = \K(\Xi)$ is as depicted on the left above, 
being constant on the intervals $[n, n+1)$ for $n = 1, \ldots, 5$.
        A loop is created at \(t = 2\) and persists until it is filled in at \(t = 6\), and another loop is created at \(t = 4\) and is filled in at \(t = 5\). Thus, the points \((2, 6)\) and \((4, 5)\) constitute the degree-\(1\) persistence diagram of $\K$ depicted on the right. The degree-1 verbose diagram of $\K$ is obtained by adding the point $(3,3)$ to the persistence diagram, reflecting the fact that a cycle is born and dies simultaneously at $t = 3$.}
    \label{fig: filtration-diagram}
\end{figure}

Note that, given a \emph{random} subset $\Xi$ of a Euclidean space, the simplicial filtration $\K(\Xi)$ is also random. For example, such a random $\Xi$ may arise from a point process in Euclidean space, modeling an atomic configuration of glass materials in disordered states \cite{hiraoka2016hierarchical}. In this context, understanding of the persistence diagram of  $\K(\Xi)$ can be useful for characterizing the geometry and topology of glass materials. Motivated in part by this,
the asymptotic behavior of persistence diagrams and persistent Betti numbers
arising from such a random $\Xi$
was analyzed by Hiraoka, Shirai, and Trinh~\cite{hiraokaetal2018}.
It has since inspired many subsequent studies.
Key results include limit theorems for persistence diagrams \cite{owada2020, shirai2022, owada2022, kanazawa2024large}, for persistent Betti numbers \cite{krebs2025, krebs2021}, and for multiparameter persistent Betti numbers \cite{botnan2024}. 
We also remark that \emph{universality} in random persistent homology has recently emerged as an active area of research~\cite{bobrowski2023, bobrowski2024universality}.

\paragraph{Verbose diagrams.}
The second line of research concerns the \emph{verbose diagram} (a.k.a. verbose barcode), introduced by Usher and Zhang. 
It is a refinement of the persistence diagram that is obtained by incorporating ephemeral persistence features as extra points along the diagonal (see Figure~\ref{fig: filtration-diagram}). More precisely, the verbose diagram is an invariant of a filtered chain complex (as opposed to the homology level) and completely determines its chain isomorphism type \cite[Theorem A]{usherzhang2016}. They also introduced the notion of \emph{concise diagram}, which turns out to be equivalent to that of the persistence diagram in a suitable sense (see Section~\ref{subsec: verbose}). The verbose and concise diagrams were employed in the study of Hamiltonian Floer theory~\cite[Section 12]{usherzhang2016}.

Belton et al. and Fasy et al. employed the verbose diagram in the study of the persistent homology transform~\cite{turner2014}, showing that verbose diagrams can describe a shape faithfully using fewer filtration directions than persistence diagrams
\cite{fasy2024, belton2020}.\footnote{We remark that verbose diagrams are sometimes called augmented persistence diagrams in the literature.}
M\'{e}moli and Zhou 
showed that the verbose diagram for the Vietoris-Rips filtration of a metric space is more discriminative than
the persistence diagram \cite{mémolizhou2024}. 
{Chacholski et al. 
studied invariants of tame parametrized chain complexes, a generalization of filtered chain complexes allowing non-injective maps \cite{chacholski2021invariants}. In the finite-dimensional case, their \emph{Betti} and \emph{minimal Betti} diagrams reduce to the verbose and concise barcodes in \cite{usherzhang2016}. Later, an algorithm to decompose filtered chain complexes into indecomposables was introduced \cite{chacholski2023decomposing}.} We remark that when a filtered chain complex arises from a simplicial filtration, its verbose barcode can also
be obtained through the usual matrix reduction procedure applied to the boundary matrix of
the underlying simplicial filtration \cite[Ch. VII]{edelsbrunner2010}.

When it comes to interpreting persistence diagrams, a prevalent perspective has been to regard points far from the diagonal as features arising from significant topological structures in the input data, whereas points near the diagonal are often attributed to noise.
However, this view is no longer universally accepted. For example, \cite{bubenik2020persistent} shows that the ensemble of points near the diagonal captures information about the curvature of the underlying manifold from which the point cloud is sampled. Moreover, the aforementioned enhanced discriminative power of the verbose diagram suggests that 
useful geometric information may also be encoded in its diagonal points.
 
In sum, the verbose diagram is a subject worth further investigation, and this work advances the understanding of its asymptotic behavior as the input point cloud data grows.

\paragraph{Known results.} We unify the two threads of research discussed above and initiate the study of \emph{random} verbose diagrams. In order to describe our contributions, we first summarize the main results of \cite{hiraokaetal2018} and \cite{shirai2022}, which we will extend to the setting of verbose diagrams.

\begin{itemize}
    \item (Strong law of large numbers for persistence diagrams)
    To model the growth of a random point cloud over time, let us consider a stationary point process on \(\R^N\),
    restricted to a growing observation window.
    The persistence diagram of the restricted point cloud, viewed as a random measure in the form of a sum of Dirac measures on $\Delta$ (Figure~\ref{subfig: delta}), 
    converges to a limiting measure (up to normalization)---which is deterministic---as the window grows 
    (Theorem~\ref{thm: vagueconv}).
    
    \item (Support of the limiting persistence diagram)
    The support of the limiting measure is equal to the closure of the set of \emph{realizable points}---those that can appear in the persistence diagram of some point cloud---under mild conditions on the point process and the filtration construction 
    (Theorem~\ref{thm: measuresupport}).

    \item (Strong law of large numbers for the persistent Betti number)
    The persistent Betti numbers of the filtration built on the restricted point cloud are asymptotically proportional to the volume of the window (Theorem~\ref{thm: persbettiSLLN}). 

    \item (Central limit theorem for the persistent Betti number)  
  Persistent Betti numbers for a homogeneous Poisson point process are asymptotically normal (Theorem~\ref{thm: persbettiCLT}).
    \footnote{
    This result extends the central limit theorem for Betti numbers~\cite{yogeshwaran2017} to the persistent setting.}    
\end{itemize}

In particular, Theorems~\ref{thm: vagueconv}~and~\ref{thm: persbettiSLLN} are extended to the marked point process setting (Remark~\ref{rmk: shirai}).

\paragraph{Our contributions.}

We extend the full set of main theorems from \cite{hiraokaetal2018} and \cite{shirai2022} summarized above, to the setting of verbose diagrams, as summarized
in Table~\ref{tab: summaryofresults}. In particular, our Theorem~\ref{thm: vagueconvverbose} implies that the extra points added to the persistence diagram along the diagonal exhibit asymptotic regularity just like the off-diagonal points, as established in earlier works. We also extend the definition of persistent Betti number \(\beta_q^{r, s}\) to allow the case \(r > s\), which can be read from the verbose diagram (see Figure~\ref{subfig: PBNextended}).
For a more detailed interpretation of the extended persistent Betti number, see Remark~\ref{rmk: PBNextended}.
\begin{table}
    \centering
        \begin{tabular}{l|l|l|}
            \hhline{~|-|-|}
             & \cellcolor[HTML]{C0C0C0}PD \& PBN & \multicolumn{1}{|l|}{\cellcolor[HTML]{C0C0C0}\begin{tabular}[c]{@{}l@{}}VD \& EPBN\\ \textbf{(our contributions)}\end{tabular}}\\ \hline
            \multicolumn{1}{|l|}{\cellcolor[HTML]{C0C0C0}\begin{tabular}[c]{@{}l@{}}Strong law of large numbers for the PD and VPD\\ (i.e. existence of limiting PD and VD) 
            \end{tabular}} & \begin{tabular}[c]{@{}l@{}}Theorem~\ref{thm: vagueconv} \\ (Remark~\ref{rmk: shirai})\end{tabular}  & Theorem~\ref{thm: vagueconvverbose} \\ \hline
            \multicolumn{1}{|l|}{\cellcolor[HTML]{C0C0C0}\begin{tabular}[c]{@{}l@{}}Total mass of the limiting PD and VD 
            \end{tabular}} &  & Theorem~\ref{thm: limitmeasuretotalmass} \\ \hline
            \multicolumn{1}{|l|}{\cellcolor[HTML]{C0C0C0}\begin{tabular}[c]{@{}l@{}}Support of the limiting PD and VD 
            \end{tabular}} & Theorem~\ref{thm: measuresupport}  & Theorem~\ref{thm: measuresupportverbose} \\ \hline
            \multicolumn{1}{|l|}{\cellcolor[HTML]{C0C0C0}\begin{tabular}[c]{@{}l@{}}Strong law of large numbers for the PBN and EPBN
            \end{tabular}} & \begin{tabular}[c]{@{}l@{}}Theorem~\ref{thm: persbettiSLLN} \\ (Remark~\ref{rmk: shirai})\end{tabular}  & Theorem~\ref{thm: persbettiSLLNextended} \\ \hline
            \multicolumn{1}{|l|}{\cellcolor[HTML]{C0C0C0}\begin{tabular}[c]{@{}l@{}}Central limit theorem for the PBN and EPBN
            \end{tabular}} & Theorem~\ref{thm: persbettiCLT}  & Theorem~\ref{thm: persbettiCLTextended} \\ \hline
        \end{tabular}
    \caption{
        In this work, we extend the limit theorems for persistence diagrams (PDs) and persistent Betti numbers (PBNs) presented in \cite{hiraokaetal2018,shirai2022} to verbose diagrams (VDs) and extended persistent Betti numbers (EPBNs).
    }
    \label{tab: summaryofresults}
\end{table}

\begin{figure}
    \centering
    \begin{subfigure}{0.27\linewidth}
        \includegraphics[width=\textwidth]{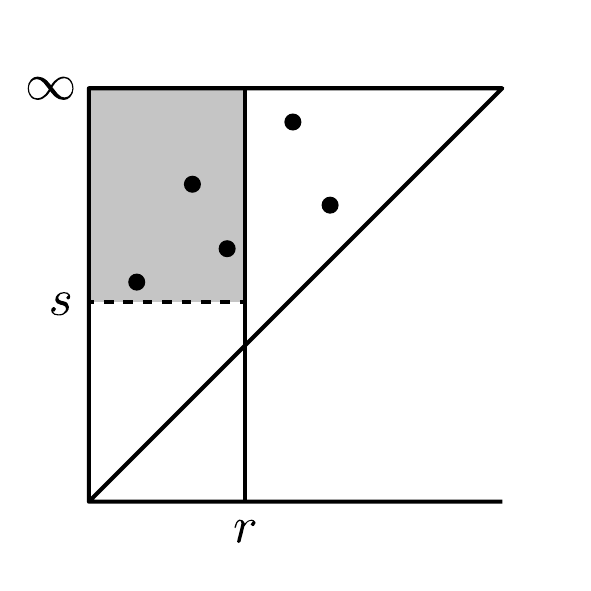}
        \caption{}
        \label{subfig: PBN}
    \end{subfigure}
    \begin{subfigure}{0.27\linewidth}
        \includegraphics[width=\textwidth]{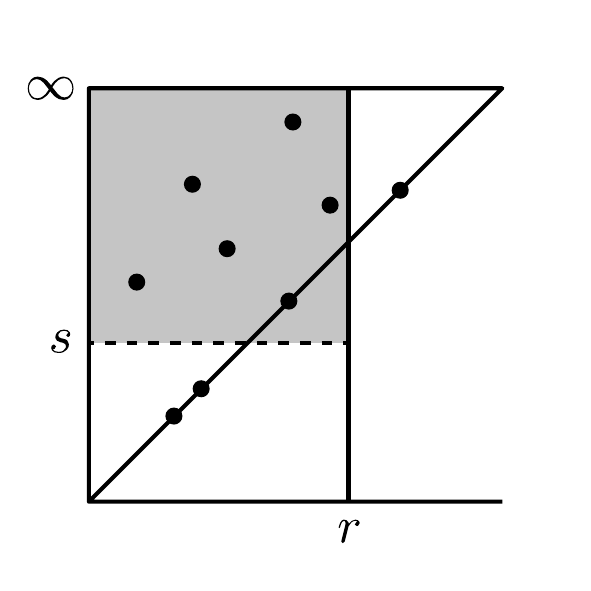}
        \caption{}
        \label{subfig: PBNextended}
    \end{subfigure}
    \caption{
        \subref{subfig: PBN} 
        For $r\leq s$ in $\R_{\geq 0}$, the persistent Betti number \(\beta_q^{r, s}\) (defined in Equation~\eqref{eq: persbetti}) equals the number of points in the persistence diagram that fall into the shaded region $[0,r]\times (s,\infty]$. This equality is known as the \emph{fundamental lemma of persistent homology} \cite[Chapter VII]{edelsbrunner2010}. 
        \\ \subref{subfig: PBNextended} 
        In this work, we extend \(\beta_q^{r, s}\) in a natural way so that it is well defined even for \(r > s\), and establish the fundamental lemma of persistent homology for the \emph{extended} persistent Betti number and the verbose diagram. Namely, we show that \(\beta_q^{r, s}\) equals the number of points in the verbose diagram that fall into the region $[0,r]\times (s,\infty]$ as before. We also show that this extended persistent Betti number satisfies a central limit theorem (see Definition~\ref{def: extendedPBN}, Proposition \ref{prop: persbettimeasure}, and Theorem~\ref{thm: persbettiCLTextended}).
    }
    \label{fig: persistent-betti-number-measure}
\end{figure}

\paragraph{Main ideas behind the proofs.}
To prove Theorem~\ref{thm: vagueconvverbose}, we utilize the idea we call “shifting the diagram.”
Namely, we modify the construction of a filtration on a point cloud, so that the resulting verbose diagram is obtained by shifting the original verbose diagram upward along the $y$-axis.
This makes every point of the new diagram lie outside the diagonal, enabling us to see the resulting verbose diagram as the ordinary persistence diagram.
We then apply the prior theorem (Theorem~\ref{thm: vagueconv}) and argue that the limiting measure for the ordinary persistence diagram of the modified filtration is a translation of the limiting measure of the original verbose diagram.
Due to the presence of the infinity line \([0, \infty) \times \{\infty\}\) on the extended plane, the notion of translation becomes technically subtle, and much of the proof is devoted to handling this subtlety.  
Theorems \ref{thm: persbettiSLLNextended} and \ref{thm: persbettiCLTextended} are proved using a similar idea of shifting the diagram, combined with our Proposition~\ref{prop: cycleboundary}, which relates diagram points to basis elements of the cycle and boundary groups.

In Theorem~\ref{thm: limitmeasuretotalmass}, we identify the total mass of the limiting measure from Theorem~\ref{thm: vagueconvverbose}.
For this, we utilize a lemma
that, under a mild assumption, the number of points in the verbose diagram is determined by the number of points in the underlying point cloud (Lemma~\ref{lem:verbose-cardinality}).  
We also use Jensen’s inequality, continuity of measures, basic linear algebra, and Proposition~\ref{prop: cycleboundary} mentioned above.

Finally, our proof of Theorem~\ref{thm: measuresupportverbose} parallels the proof of Theorem~\ref{thm: measuresupport} from \cite{hiraokaetal2018}. The main ingredient in establishing Theorem~\ref{thm: measuresupportverbose} is Lemma~\ref{lem: matchingdistancecontinuity}, which shows that if the input point cloud is perturbed slightly, then each point in the resulting verbose diagram moves by a correspondingly small amount. 

\paragraph{Organization.} 

Section~\ref{sec: prelim} provides preliminaries necessary for the remainder of this paper.
Section~\ref{sec: prevresults} summarizes the main results from \cite{hiraokaetal2018} and \cite{shirai2022}.
Section~\ref{sec: mainresults} presents our main results, which extend the results from the previous section.

\paragraph{Acknowledgement.}
JJ and WK thank 
Khanh Duy Trinh for helpful correspondence. This work was partially supported by the National Research Foundation of Korea (NRF) grant funded by the Korea government (MSIT) (RS-2025-00515946 and 2021R1A2C1092925).

\section{Preliminaries} \label{sec: prelim}

Section~\ref{subsec: measure} reviews the basics of random measures and some key concepts of (marked) point processes.
Section~\ref{subsec: pershom} reviews the basics of persistent homology.
Section~\ref{subsec: verbose} reviews the notion of the verbose diagram.

\subsection{Random Measures and Point Processes} \label{subsec: measure}

We review the basics of random measures and (marked) point processes.
See \cite{kallenberg2017} for a detailed introduction.

Suppose that \(S\) is a second-countable locally compact Hausdorff space.  
Then it is well known that \(S\) is Polish, i.e., \(S\) is a separable and completely metrizable space \cite[Theorem~5.3]{kechris2012classical}.
A measure \(m\) on the Borel \(\sigma\)-algebra of \(S\) is called a \emph{Radon measure} (or \emph{locally finite measure}) if \(m(F) < \infty\) for every compact subset \(F\) of \(S\).  
Let \(\mathscr{R}(S)\) denote the set of all Radon measures on \(S\).  
A sequence \(\{\mu_n\}_n\) in \(\mathscr{R}(S)\) is said to \emph{converge vaguely} to \(\mu \in \mathscr{R}(S)\), and we write \(\mu_n \xrightarrow{v} \mu\), if
\[
    \int_S f \, d\mu_n \to \int_S f \, d\mu \quad \text{as } n \to \infty,
\]
for every continuous function \(f: S \to \R\) with compact support.
For any subset \(B \subset S\), let \(B^\circ\) and \(\overline{B}\) denote the interior and closure of \(B\), respectively.
Recall that $B$ is called \emph{relatively compact} if its closure \(\overline{B}\) is compact.

Vague convergence can be characterized as follows.

\begin{lemma}[{\cite[Lemma~4.1]{kallenberg2017}}] \label{lem: vagueconvequiv}
    Let \(\mu, \mu_1, \mu_2, \ldots \in \mathscr{R}(S)\). 
    Then the following are equivalent:
    \begin{enumerate}[(i)]
        \item \(\mu_n \xrightarrow{v} \mu\);
        \item For every relatively compact Borel set \(B \subset S\),
        \[
            \mu(B^\circ) \leq \liminf_{n \to \infty} \mu_n(B) \leq \limsup_{n \to \infty} \mu_n(B) \leq \mu\left(\overline{B}\right).
        \]
    \end{enumerate}
\end{lemma}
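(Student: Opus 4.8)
The plan is to prove the two implications separately, exploiting throughout that $S$, being second-countable locally compact Hausdorff, is Polish: we fix a compatible metric $d$, and we use that every Radon measure on $S$ is outer regular by open sets and inner regular by compact sets on sets of finite measure, together with Urysohn's lemma in the locally compact Hausdorff setting (which, given a compact $F$ and an open $U\supseteq F$ with $F$ possessing a relatively compact neighborhood inside $U$, produces $g\in C_c(S)$ with $\mathbbm{1}_F\le g\le\mathbbm{1}_U$).

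For (i)$\Rightarrow$(ii), fix a relatively compact Borel set $B$. For the upper estimate, note $\overline B$ is compact, so by local compactness the neighborhoods $U_\varepsilon=\{x:d(x,\overline B)<\varepsilon\}$ are relatively compact for all small $\varepsilon>0$; Urysohn gives $f_\varepsilon\in C_c(S)$ with $\mathbbm{1}_{\overline B}\le f_\varepsilon\le\mathbbm{1}_{U_\varepsilon}$, whence $\limsup_n\mu_n(B)\le\limsup_n\int f_\varepsilon\,d\mu_n=\int f_\varepsilon\,d\mu\le\mu(\overline{U_\varepsilon})$. All sets here sit inside a fixed compact set, hence have finite $\mu$-measure, so letting $\varepsilon\downarrow 0$ and using continuity of $\mu$ from above gives $\limsup_n\mu_n(B)\le\mu(\overline B)$. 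Symmetrically, for any compact $F\subseteq B^\circ$ choose $g\in C_c(S)$ with $\mathbbm{1}_F\le g\le\mathbbm{1}_{B^\circ}$, so that $\liminf_n\mu_n(B)\ge\int g\,d\mu\ge\mu(F)$; taking the supremum over such $F$ and invoking inner regularity of $\mu$ on the finite-measure open set $B^\circ$ yields $\liminf_n\mu_n(B)\ge\mu(B^\circ)$.

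For (ii)$\Rightarrow$(i), fix $f\in C_c(S)$; writing $f=f^+-f^-$ we may assume $f\ge 0$, and put $K=\operatorname{supp} f$. Applying (ii) to the relatively compact set $K$ (with $\overline K=K$) gives $\sup_n\mu_n(K)<\infty$. By the layer-cake formula $\int f\,d\mu_n=\int_0^{\|f\|_\infty}\mu_n(\{f>t\})\,dt$, where each $\{f>t\}$ is open, relatively compact (being $\subseteq K$), and satisfies $\partial\{f>t\}\subseteq\{f=t\}$. Since the sets $\{f=t\}$, $t>0$, are pairwise disjoint subsets of the finite-measure set $K$, we have $\mu(\{f=t\})>0$ for at most countably many $t$; hence for Lebesgue-a.e.\ $t$ the set $\{f>t\}$ is a $\mu$-continuity set, and then (ii) forces $\mu_n(\{f>t\})\to\mu(\{f>t\})$. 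As $0\le\mu_n(\{f>t\})\le\sup_n\mu_n(K)<\infty$ uniformly in $n$ and $t$, dominated convergence gives $\int f\,d\mu_n\to\int_0^{\|f\|_\infty}\mu(\{f>t\})\,dt=\int f\,d\mu$, which is (i).

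The main obstacle is not conceptual but careful bookkeeping: in (i)$\Rightarrow$(ii) one must ensure the approximating neighborhoods are genuinely relatively compact so that every measure in play is finite and continuity of measure applies, and in (ii)$\Rightarrow$(i) one must secure the uniform bound $\sup_n\mu_n(K)<\infty$ that legitimizes dominated convergence — this is precisely the point where applying hypothesis (ii) to $K$ itself is indispensable.
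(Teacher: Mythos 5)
The paper does not prove this lemma at all---it is quoted directly from Kallenberg \cite[Lemma~4.1]{kallenberg2017}---so there is no in-paper argument to compare against. Your proof is correct and is the standard Portmanteau-type argument (Urysohn sandwiching of indicators of $\overline{B}$ and of compact subsets of $B^\circ$ for (i)$\Rightarrow$(ii), and the layer-cake formula together with the fact that $\{f>t\}$ is a $\mu$-continuity set for all but countably many $t$ for (ii)$\Rightarrow$(i)), with the delicate points (relative compactness of the $\varepsilon$-neighborhoods of $\overline{B}$ via local compactness, and the uniform bound $\sup_n \mu_n(K)<\infty$ obtained by applying (ii) to $K$ itself) handled correctly.
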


In this paper, we work with locally finite random point sets.  
We identify them with sums of Dirac measures, which allows us to utilize random measure theory.
In fact, such measures are precisely the integer-valued Radon measures:

\begin{proposition} \label{prop: intvaluedirac}
    Any integer-valued \(\mu \in \mathscr{R}(S)\) equals a finite or countably infinite sum of Dirac measures \(\delta_{x}\) at points \(x \in S\).
\end{proposition}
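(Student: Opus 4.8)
The plan is to argue that an integer-valued Radon measure $\mu$ on $S$ is supported on a countable (indeed, locally finite) set of atoms, each carrying a positive integer mass, and then to assemble $\mu$ as the corresponding sum of Dirac measures. First I would fix a countable cover of $S$ by relatively compact open sets $\{U_k\}_{k\in\mathbb{N}}$; this is possible because $S$ is second-countable, locally compact Hausdorff, hence has a countable base of relatively compact open sets. Since $\mu$ is Radon, $\mu(\overline{U_k})<\infty$ for each $k$, so it suffices to understand $\mu$ restricted to each $\overline{U_k}$, where it is a finite integer-valued measure; the global statement then follows by a routine patching argument over the countable family (being careful that atoms in overlaps are not double-counted, e.g. by passing to the disjointification $V_k = U_k \setminus \bigcup_{j<k} U_j$).

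The heart of the matter is the finite case: if $m$ is a finite integer-valued Borel measure on a compact metrizable space $F$, then $m$ is a finite sum of Dirac measures. Here I would show the set $A = \{x \in F : m(\{x\}) > 0\}$ of atoms is finite — it has at most $m(F)$ elements since distinct atoms contribute disjoint positive-integer masses — and then prove $\mu' := m - \sum_{x \in A} m(\{x\})\,\delta_x$, which is a nonnegative integer-valued measure with no atoms, must be the zero measure. This last point is the key step and the main obstacle: I must rule out a nonzero atomless integer-valued measure. The idea is that an atomless finite Borel measure on a metrizable space can be "split" — for any Borel set $B$ and any $0 \le c \le \mu'(B)$ there is a Borel $B' \subseteq B$ with $\mu'(B') = c$ (a standard consequence of atomlessness, e.g. via Lyapunov's theorem or a direct bisection argument using the metric). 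If $\mu' \ne 0$, pick $B$ with $\mu'(B) = n \ge 1$; choosing $c = 1/2$ yields a Borel set of measure $1/2$, contradicting integer-valuedness. Hence $\mu' = 0$ and $m = \sum_{x\in A} m(\{x\})\,\delta_x$.

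Combining the two parts: on each $V_k$ the measure $\mu|_{V_k}$ is finite and integer-valued, hence a finite sum of Dirac measures; summing over $k$ expresses $\mu = \sum_k \mu|_{V_k}$ as a finite or countably infinite sum of Dirac measures (with multiplicities), and local finiteness of the atom set follows from $\mu(\overline{U_k}) < \infty$. I expect the splitting lemma for atomless measures to be the only non-routine ingredient; everything else is bookkeeping with the countable cover and the observation that positive integer masses at distinct points cannot accumulate within a set of finite measure.
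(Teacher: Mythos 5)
Your proposal is correct, but it takes a genuinely different route from the paper. The paper invokes Kallenberg's atomic decomposition to write \(\mu = \alpha + \sum_i \beta_i \delta_{\sigma_i}\) with \(\alpha\) diffuse, notes the \(\beta_i\) are integers, and then rules out a nonzero diffuse integer-valued Radon measure by a bespoke nested-ball argument: using a complete metric and a countable dense set it produces shrinking balls of measure \(\geq 1\) whose centers form a Cauchy sequence, and continuity from above forces a point of mass \(\geq 1\), contradicting diffuseness. You instead partition \(S\) into countably many relatively compact Borel pieces, extract the (at most \(m(F)\)-many, since each has mass \(\geq 1\)) point atoms by hand on each piece, and kill the residual point-atomless finite integer-valued measure via the Sierpi\'nski/Lyapunov splitting property, getting a set of non-integer measure (or, equivalently, the impossibility of splitting a set of measure \(1\) into two pieces of positive integer measure). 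The one point you should make explicit is that the splitting theorem applies to measures with no measure-theoretic atoms, whereas you have only removed point atoms; on a second-countable (Polish) space these notions coincide, and your fallback ``direct bisection argument using the metric'' is exactly what closes this gap---indeed it is essentially the same nested-ball argument the paper uses, with completeness (or compactness of \(\overline{U_k}\)) needed to ensure the limit point exists. In exchange, your argument is self-contained modulo that classical splitting fact and does not rely on Kallenberg's decomposition lemma, while the paper's proof is shorter by taking that lemma as a black box; the localization to relatively compact pieces and the disjointification \(V_k = U_k \setminus \bigcup_{j<k} U_j\) are routine and correct.
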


Although this proposition is well-known, we include its proof in Appendix~\ref{apdx: pfofintvaluedirac} for completeness.

Let \(\mathscr{N}(S)\) denote the subset of \(\mathscr{R}(S)\) consisting of all integer-valued measures.  
By the preceding proposition, any element of \(\mathscr{N}(S)\) can be expressed as
\(\sum_{i \in I} \beta_i \delta_{\sigma_i}\),  $\beta_i\in \mathbb{Z}_{>0},\ \sigma_i\in S$ for some index set $I$. This sum is identified with  
the multiset of points \(\sigma_i\) in \(S\), where each \(\sigma_i\) has multiplicity \(\beta_i\).

Let us equip \(\mathscr{R}(S)\) with the \emph{vague topology}, which is the initial topology generated by the maps \(\pi_f: \mu \mapsto \int_S f\,d\mu\), for all continuous functions \(f: S \to \R\) with compact support.
The vague convergence is equivalent to the convergence in vague topology \cite[Lemmas~4.3~and~4.6]{kallenberg2017}.
With the Borel \(\sigma\)-algebra induced by the vague topology, both \(\mathscr{R}(S)\) and its subset \(\mathscr{N}(S)\) become measurable spaces.
A \(\mathscr{R}(S)\)-valued random variable is called a \emph{random measure} on \(S\).  
In particular, a \(\mathscr{N}(S)\)-valued random variable is called a \emph{point process} on \(S\).
Given a random measure \(\xi\) on \(S\), the \emph{mean measure} (or \emph{intensity measure}) \(\mathbb{E}[\xi]\) is defined for all Borel sets \(A \subset S\) by
\begin{equation}\label{eq:expectation}
    \mathbb{E}[\xi](A) := \mathbb{E}[\xi(A)].
\end{equation}
A point process \(\xi\) on \(S\) is called \emph{simple} if
\[
    \mathbb{P}(\xi(\{x\}) \leq 1 \text{ for all } x \in S) = 1.
\]
A point process \(\Phi\) on \(\R^N\) is said to \emph{have all finite moments} if \(\mathbb{E}[\Phi(A)^k] < \infty\) for every bounded Borel set \(A\) and every \(k\in \mathbb{Z}_{>0}\).

The notion of a point process is generalized so that each point has a \emph{mark}.  
Examples of such marks include the temperature or altitude at a point (viewed as a location), and the color of a point.
Let \(\M\) be a second-countable locally compact Hausdorff space, representing the set of marks.
A point process \(\Phi\) on \(S \times \M\) is called a \emph{marked point process} on \(S\) (with marks in \(\M\)) if its marginal process \(\Phi_g\) on \(S\) (called the \emph{ground process}) given by \(A \mapsto \Phi(A \times \M)\) is simple.
\begin{framed}
     Throughout this paper,
    \begin{itemize}
        \item \(\M\) denotes a second-countable locally compact Hausdorff space, representing the set of marks;
        \item point processes on $\R^N$ are assumed to be simple and have all finite moments, as in \cite{hiraokaetal2018,shirai2022};
        \item in the same vein, whenever marked point processes on \(\R^N\) are considered, we assume that their ground processes have all finite moments, as in \cite{shirai2022}.
    \end{itemize}    
\end{framed}

For any measure \(\mu\) on \(\R^N \times \M\) and any vector \(v \in \R^N\), denote by \(\mu^{(v)}\) the measure defined by \(\mu^{(v)}(A) := \mu(A + v)\), where \(A + v := \{(x + v, m): (x, m) \in A\}\).
A marked point process \(\Phi\) on \(\R^N\) is called \emph{stationary} if it is translation-invariant, i.e., \(\Phi\) and \(\Phi^{(v)}\) have the same distribution for all \(v \in \R^N\).
The \emph{intensity} of a stationary marked point process \(\Phi\) on \(\R^N\) with marks in \(\M\) is the expectation \(\mathbb{E}[\Phi([0, 1)^N \times \M)] \in [0, \infty]\).

For \(E \subset \mathscr{N}(\R^N \times \M)\), define \(E^{(v)} := \{\mu^{(v)} : \mu \in E\}\).  
Given a stationary marked point process \(\Phi\), let \(\mathcal{I}\) be the \(\sigma\)-algebra on \(\mathscr{N}(\R^N \times \M)\) consisting of all sets \(I \subset \mathscr{N}(\R^N \times \M)\) for which
\[
    \mathbb{P}(\Phi \in I \triangle I^{(v)}) = 0
\]
for all \(v \in \R^N\), where \(\triangle\) denotes the symmetric difference.
We say that \(\Phi\) is \emph{ergodic} if \(\mathbb{P}(\Phi \in I) \in \{0, 1\}\) for all \(I \in \mathcal{I}\).

A simple point process on $S$ can be viewed as a special case of a marked point process on $S$ with marks in $\M=\{\ast\}$.
Therefore, the notions defined for marked point processes, such as the ground process, stationarity, intensity and ergodicity, are also defined for simple point processes.

\begin{example}
    An important example of an unmarked point process that is both stationary and ergodic is the homogeneous Poisson point process (see e.g. \cite{last2018lectures}).
    A point process \(\Phi\) on \(\R^N\) is called a \emph{homogeneous Poisson point process with intensity \(\lambda \in (0, \infty)\)} if it satisfies the following two properties:
    \begin{enumerate}[(i)]
        \item For disjoint Borel sets \(B_1, \dots, B_k\), the random variables \(\Phi(B_1), \dots, \Phi(B_k)\) are independent;
        \item For any bounded Borel set \(B\), the random variable \(\Phi(B)\) has a Poisson distribution with mean \(\lambda \cdot \vol{B}\),
    \end{enumerate}
    where \(\vol{A}\) denotes the Lebesgue measure of \(A \subset \R^N\).
    We remark that this notion of intensity is consistent with the definition above:  
    \(\mathbb{E}[\Phi([0, 1)^N)] = \lambda \cdot \vol{[0, 1)^N} = \lambda\).
\end{example}

\subsection{Persistent Homology} \label{subsec: pershom}

We refer the reader to \cite{hatcher2002algebraic} for an overview of the notions of \emph{simplicial complex} and \emph{simplicial homology}.

A \emph{filtration} \(\K = {\{K_t\}_{t\in [0,\infty)}}\) of a simplicial complex \(K\) is an increasing and right-continuous family of complexes indexed by \(\R_{\geq 0}\); that is, \(K_r \subset K_s\subset K\) whenever \(r \leq s\) in \(\R_{\geq 0}\), and for all \(r \in \R_{\geq 0}\), we have \(K_r = \bigcap_{s > r} K_s\).
From the perspective of category theory, a filtration can be viewed as a functor from \(\R_{\geq 0}\) to the category of simplicial complexes and simplicial maps, where \(\R_{\geq 0}\) is viewed as a category whose objects are its elements and in which there exists a unique arrow $r\rightarrow s$ iff $r\leq s$. 

Let us fix a field \(\mathbb{F}\) and a homology degree \(q\). Let \(\mathbf{Vec}_{\mathbb{F}}\) be the category of vector spaces and linear maps over \(\mathbb{F}\).
By applying the homology functor \(H_q(-; \mathbb{F})\) to the filtration $\K$, we obtain the functor
\(    H_q(\K; \mathbb{F}): \R_{\geq 0} \to \mathbf{Vec}_{\mathbb{F}}.
\)
In general, a functor from \(\R_{\geq 0}\) to \(\mathbf{Vec}_{\mathbb{F}}\) is called a \emph{persistence module}. The direct sum of two persistence modules is defined pointwise.
The basic building blocks of a persistence module are the \emph{interval modules}: For any interval $J\subset \R_{\geq 0}$, the interval module $\mathbb{I}_J$ is defined as follows:
\[
    \mathbb{I}_J(r) =
    \begin{cases}
        \mathbb{F}, & \text{if } r \in J, \\
        0, & \text{otherwise},
    \end{cases}
    \quad \text{and} \quad
    \mathbb{I}_J(r \leq s) =
    \begin{cases}
        \mathrm{Id}_{\mathbb{F}}, & \text{if } r\leq s \in J, \\
        0, & \text{otherwise}.
    \end{cases}
\]
Two persistence modules \(P\) and \(Q\) are called \emph{isomorphic}, written \(P \cong Q\), if there exists a natural isomorphism between them.

Given a persistence module \(P\), if $\dim P(r)<\infty$ for all $r\in \R_{\geq 0}$, \(P\) is decomposed into a direct sum of interval modules, which is unique up to isomorphism \cite[Theorem 1.1]{crawley-boevey2015}.
This implies that, for any filtration $\K$ of a finite simplicial complex, there exists a unique multiset of points \((b_i, d_i)\)
 in  
\begin{equation}\label{eq:Delta}
\Delta := \{(x, y) \in \overline{\R_{\geq 0}}^2 : 0 \leq x < y \leq \infty\} \mbox{\ (see Figure \ref{subfig: delta})}
\end{equation} 
such that 
\[
    H_q(\K,\mathbb{F}) \cong \bigoplus_i \mathbb{I}_{[b_i, d_i)},
\]
 where \(\overline{\R_{\geq 0}} = \R_{\geq 0}\cup\{\infty\}= [0, \infty]\).
The above multiset 
is called the degree-\(q\) \emph{persistence diagram} of \(\K\), and is denoted by \(D_q(\K)\), which summarizes the evolution of $q$-dimensional homological features 
(i.e., \(q\)-dimensional holes)
in $\K$ over the indexing set $\R_{\geq 0}$. 
Specifically, if \((b, d)\in D_q(\K)\), then, in $\K$, there exists a $q$-cycle formed at time $b$ that becomes a $q$-boundary at time $d$.
(see Figure~\ref{fig: filtration-diagram}).

\begin{figure}
    \centering
    \begin{subfigure}{0.3\linewidth}
        \includegraphics[width=\linewidth]{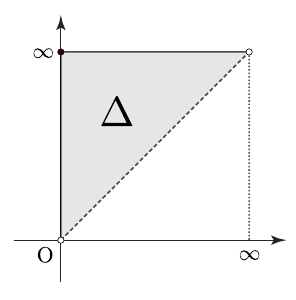}
        \caption{}
        \label{subfig: delta}
    \end{subfigure}
    \begin{subfigure}{0.3\linewidth}
        \includegraphics[width=\linewidth]{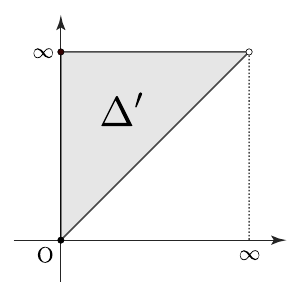}
        \caption{}
        \label{subfig: delta'}
    \end{subfigure}
    \caption{
        Illustration of $\Delta$ and $\Delta'$. The set \(\Delta'\) is defined by adding the half-open diagonal  \(\{(t, t): 0 \leq t < \infty\}\) to \(\Delta\).
        Note that the point \((\infty, \infty)\) does not belong to \(\Delta'\), which makes  \(\Delta'\) noncompact.
    }
    \label{fig: delta-and-delta'}
\end{figure}

We now introduce another key concept of persistent homology. 
For any pair \(r \leq s\) in \(\R_{\geq 0}\), the degree-\(q\) \emph{persistent Betti number} \(\beta_q^{r, s}(\K)\) is defined as the rank of the linear map \(H_q(\iota^{r, s}; \mathbb{F})\), where \(\iota^{r, s}: K_r \hookrightarrow K_s\) is the inclusion, and can also be described as follows:
Given a simplicial complex $K$, let \(\mathcal{Z}_q(K)\) and \(\mathcal{B}_q(K)\) be the subgroups of \(q\)-cycles and \(q\)-boundaries respectively of the simplicial \(q\)-chain group of $K$ with coefficients in $\mathbb{F}$. The image of the map \(H_q(\iota^{r, s}; \mathbb{F})\) is isomorphic to the space $\frac{\mathcal{Z}_q(K_r)}{\mathcal{Z}_q(K_r) \cap \mathcal{B}_q(K_s)}$ of homology classes of $K_r$ that are still alive at $K_s$. Therefore, we have 
\begin{equation}
    \beta_q^{r, s}(\K) = \dim \frac{\mathcal{Z}_q(K_r)}{\mathcal{Z}_q(K_r) \cap \mathcal{B}_q(K_s)}. \label{eq: persbetti}
\end{equation}

\subsection{Verbose and Concise Diagrams} \label{subsec: verbose}

The persistence diagram is enriched into the \emph{verbose diagram} \cite{usherzhang2016}, which we briefly review in this section.
The verbose diagram has a rather abstract and technical definition, although its interpretation is as simple as shown in Figure~\ref{fig: filtration-diagram}. 
We remark that this technicality is 
exploited when establishing our main theorems.
We mostly follow the terminology of \cite{mémolizhou2024}, which is relatively simpler than that of \cite{usherzhang2016}.

\paragraph{Filtered chain complexes and their verbose and concise diagrams.} Let us fix a field \(\mathbb{F}\).
A \emph{non-Archimedean normed vector space} is a pair \((C, \ell)\), where \(C\) is an \(\mathbb{F}\)-vector space and \(\ell: C \to [-\infty, \infty)\) is a \emph{vector filtration function}, meaning that it satisfies the following properties:
\begin{itemize}
    \item \(\ell(x) = -\infty\) if and only if \(x = 0\);
    \item For any \(0 \neq \lambda \in \mathbb{F}\) and \(x \in C\), we have \(\ell(\lambda x) = \ell(x)\);
    \item For any \(x, y \in C\), we have \(\ell(x + y) \leq \max\{\ell(x), \ell(y)\}\).
\end{itemize}
A finite set \(\{x_1, \dots, x_r\} \subset C\) is said to be \emph{orthogonal} if, for any \(\lambda_1, \dots, \lambda_r \in \mathbb{F}\),
\[
    \ell\left(\sum_{i = 1}^r \lambda_i x_i\right) = \max \{\ell(x_i) : \lambda_i \neq 0\},
\]
where we adopt the convention that the maximum of the empty set is \(-\infty\).
A non-Archimedean normed vector space is called \emph{orthogonalizable} if it admits an orthogonal basis.

Suppose that \((C, \ell_C)\) and \((D, \ell_D)\) are two finite-dimensional orthogonalizable vector spaces.  
Let \(F: C \to D\) be a linear map with rank \(r\). A \emph{singular value decomposition} of \(F\) is a pair\\ \(((y_1, \dots, y_n), (x_1, \dots, x_m))\), where \((y_1, \dots, y_n)\) is an ordered orthogonal basis for \(C\), and \((x_1, \dots, x_m)\) is an ordered orthogonal basis for \(D\), satisfying the following conditions:
\begin{itemize}
    \item \((y_{r+1}, \dots, y_n)\) is an ordered orthogonal basis for \(\mathrm{Ker}\, F\);
    \item \((x_1, \dots, x_r)\) is an ordered orthogonal basis for \(\mathrm{Im}\, F\);
    \item \(F(y_i) = x_i\) for all \(i = 1, \dots, r\).
\end{itemize}
Every linear map between two finite-dimensional orthogonalizable spaces admits a singular value decomposition \cite[Theorem~3.4]{usherzhang2016}.

A \emph{filtered chain complex (FCC)} is a triple \(\mathcal{C} = (C_*, \partial, \ell)\) that satisfies the following conditions:
\begin{itemize}
    \item \((C_*, \partial)\) is a chain complex such that \(\ell \circ \partial \leq \ell\);
    \item Let \(C_* = \bigoplus_{q \in \mathbb{Z}} C_q\); then \(\sum_{q \in \mathbb{Z}} \dim C_q < \infty\), and each \((C_q, \ell|_{C_q})\) is an orthogonalizable vector space.
\end{itemize}
For an FCC \(\mathcal{C} = (C_*, \partial, \ell)\) and each \(q \in \mathbb{Z}\), define \(\partial_q: C_q \to \mathrm{Ker}\, \partial_{q-1}\) as the restriction of \(\partial\).  
Consider a singular value decomposition \(((y_1, \dots, y_n), (x_1, \dots, x_m))\) of the linear map \(\partial_{q+1}\).
If the rank of \(\partial_{q+1}\) is \(r\), then the degree-\(q\) \emph{verbose diagram} of \(\mathcal{C}\) is the multiset \(D_{\mathrm{Ver}, q}(\mathcal{C})\) consisting of:
\begin{itemize}
    \item a pair \((\ell(x_i), \ell(y_i))\) for each \(i = 1, \dots, r\); and
    \item a pair \((\ell(x_i), \infty)\) for each \(i = r+1, \dots, m\).
\end{itemize}
The \emph{lifetime} of a point \((b, d) \in D_{\mathrm{Ver}, q}(\mathcal{C})\) is defined as the value \(d - b \in [0, \infty]\).  
The \emph{concise} diagram of \(\mathcal{C}\) is defined as the submultiset of the verbose diagram consisting of the points with strictly positive lifetime.

For the above notions to be well defined, the verbose diagram must be independent of the choice of singular value decomposition; this is indeed guaranteed by
\cite[Theorem~7.1]{usherzhang2016}.
It is noteworthy that the verbose diagram determines the filtered chain isomorphism type of an FCC whereas the concise diagram determines only the filtered chain homotopy type of it  \cite[Theorems A and B]{usherzhang2016}.

The concise diagram can be identified with the standard persistence diagram defined in Section~\ref{subsec: pershom}, provided the following mild assumptions hold.  
Let \(\K = {\{K_t\}_{t\in [0,\infty)}}\)
be a filtration of a simplicial complex \(K\).  
We assume that:  
(i) \(K\) is finite, and  
(ii) every simplex of \(K\) appears in the filtration, i.e.
\begin{equation} \label{eq: filtrationassumption(ii)}
    K = \bigcup_{t \geq 0} K_t.
\end{equation}
Note that any filtration can be modified to satisfy assumption~(ii) by redefining \(K\) as the right-hand side of \eqref{eq: filtrationassumption(ii)}.
Assumption~(i) ensures that all homology groups are finite-dimensional, so that the persistence diagrams are well-defined. 
Assumption (ii) is necessary for inducing an FCC from the filtration $\K$, as explained below.
 
Let \((C_*(K), \partial)\) be the simplicial chain complex of \(K\) over the coefficient field \(\mathbb{F}\).  
Define a function \(\ellK{\K}: C_*(K) \to [-\infty, \infty)\) by
\begin{equation} \label{eq: vectfilt}
    \ellK{\K} \left( \sum_i \lambda_i \sigma_i \right) = \max_i \{\tK{\K}(\sigma_i): \lambda_i \neq 0\},
\end{equation}
where the \(\sigma_i\) are simplices of \(K\), and the \emph{birth-time function}
\(\tK{\K}\) is defined by, for any simplex \(\sigma \subset K\),
\begin{equation} \label{eq: birthtime}
   \tK{\K}(\sigma) = \inf \{t \geq 0: \sigma \in K_t\}.
\end{equation}

\begin{remark}\label{rmk:meaning of ell}
For any chain \(x = \sum_i \lambda_i \sigma_i\), \(\ellK{\K}(x)\) is the least parameter value at which each \(\sigma_i\) with \(\lambda_i \neq 0\) exists in the filtration.
In other words, \(\ellK{\K}(x)\) is the birth-time of the chain \(x\).
\end{remark}
Note that \(\tK{\K}(\sigma)<\infty\) for every $\sigma\in K$ due to assumption~(ii), which ensures that the image of $\ellK{\K}$ is contained in $[-\infty,\infty)$.
Since the set of \(q\)-simplices form an orthogonal basis for the \(q\)-chain group for every degree \(q \geq 0\), we have that
\begin{equation} \label{eq: filtrationFCC}
    \mathcal{C}(\K) := (C_*(K), \partial, \ellK{\K})
\end{equation}
forms an FCC.

As a result, the verbose diagram \(D_{\mathrm{Ver}, q}(\mathcal{C}(\K))\) is well-defined.
Furthermore, the degree-\(q\) concise diagram of \(\mathcal{C}(\K)\) coincides with the degree-\(q\) persistence diagram \(D_q(\K)\), and thus \(D_q(\K)\) can be viewed as a subset of \(D_{\mathrm{Ver}, q}(\mathcal{C}(\K))\) \cite[Theorem~6.2 and Definition 6.3]{usherzhang2016}.
The points of \(D_{\mathrm{Ver}, q}(\mathcal{C}(\K))\) that do not belong to \(D_q(\K)\) lie on the diagonal line $y=x$ in the plane (e.g. the point $(3,3)$ in Figure \ref{fig: filtration-diagram}).
Given a singular value decomposition, each of such points correspond to two chains \({y} \in C_{q+1}\) and \({x} \in \mathrm{Ker}\, \partial_q\) such that \(\partial_{q+1}({y}) = {x}\)
and \(\ellK{\K}(x) = \ellK{\K}(y)\).
By Remark~\ref{rmk:meaning of ell}, each of the points on the diagonal in the verbose diagram represents a \(q\)-cycle (\(x\)) that becomes boundary (\(\partial_{q+1}(y) = x\)) as soon as it is born (\(\ellK{\K}(x) = \ellK{\K}(y)\)).

\paragraph{Metrics for FCCs and verbose diagrams.} As a counterpart of the bottleneck distance for persistence diagrams, the \emph{matching distance} between verbose diagrams was introduced in \cite{mémolizhou2024}, and we review it below.

We first introduce a metric \(d_\infty\) on \(\Delta'\), induced by the \(l_\infty\) norm.
For \((p, q), (p', q') \in \Delta'\), define
\begin{align*}
    d_\infty((p, q), (p', q')) &:= \|(p, q) - (p', q')\|_\infty \\
    &=
    \begin{cases}
        \max \{|p - p'|, |q - q'|\}, &\quad \text{if } q, q' < \infty, \\
        |p - p'|, &\quad \text{if } q = q' = \infty, \\
        \infty, &\quad \text{otherwise}.
    \end{cases}
\end{align*}

Next, we review the notions of multisets and maps between them.
A \emph{multiset} \(A\) is an ordered pair \((U, m)\), where \(U\) is a set called the \emph{underlying set}, and \(m: U \to \mathbb{Z}_{\geq 0}\) is a function called the \emph{multiplicity function}.  
The \emph{support} of \(A\) is defined as \(\mathrm{supp}(A) := \{a \in U : m(a) > 0\}\), and the \emph{cardinality} \(|A|\) of \(A\) is given by the sum of multiplicities over the support.
A multiset \(A\) can also be represented as the \emph{labeled set}
\[
    \tilde{A} := \{(a, i) : a \in \mathrm{supp}(A),\, 1 \leq i \leq m(a)\}.
\]
A \emph{map} \(\phi\) between two multisets \(A\) and \(B\) is defined as a function \(\phi: \tilde{A} \to \tilde{B}\) between their labeled sets.  
We denote this map by \(\phi: A \to B\) with slight abuse of notation.  
Such a map is called a \emph{bijection} if it is a bijection between the labeled sets.

\begin{definition} \label{def: matchingdist}
    Let \(A\) and \(B\) be multisets supported on \(\Delta'\).
    The \emph{matching distance} between \(A\) and \(B\) is defined by
    \[
        d_M(A, B) = \inf \left\{ \sup_{a \in A} \|a - \phi(a)\|_\infty : A \xrightarrow{\phi} B \text{ is a bijection} \right\},
    \]
    where \(d_M(A, B) = \infty\) if \(|A| \neq |B|\).
\end{definition}

Different notions of \emph{interleaving distance} for FCCs have been introduced in \cite{mémolizhou2024, usherzhang2016}; here, we review the one from \cite{mémolizhou2024}.

For an FCC \((C_*, \partial_C, \ell_C)\) and \(\lambda \in \R\), let
\[
    C_*^\lambda := \ell_C^{-1}([-\infty, \lambda]) \subset C_*
\]
be the subspace of \(C_*\) consisting of those \(x \in C_*\) for which \(\ell_C(x) \leq \lambda\).

\begin{definition}[{\cite[Definition~4.1~and~Remark~4.2]{mémolizhou2024}}] \label{def: interleavingdist}
    For \(\delta \geq 0\), a \emph{\(\delta\)-interleaving} between two FCCs \((C_*, \partial_C, \ell_C)\) and \((D_*, \partial_D, \ell_D)\) is a pair \((\mathcal{F}_*, \mathcal{G}_*)\) of chain maps \(\mathcal{F}_*: C_* \to D_*\) and \(\mathcal{G}_*: D_* \to C_*\) such that:
    \begin{itemize}
        \item \(\ell_D \circ \mathcal{F}_* \leq \ell_C + \delta\);
        \item \(\ell_C \circ \mathcal{G}_* \leq \ell_D + \delta\);
        \item For each \(\lambda \in \R\), the compositions
        \[
            C_*^\lambda \xrightarrow{\mathcal{F}_*^\lambda} D_*^{\lambda + \delta} \xrightarrow{\mathcal{G}_*^{\lambda + \delta}} C_*^{\lambda + 2\delta}
            \quad \text{and} \quad
            D_*^\lambda \xrightarrow{\mathcal{G}_*^\lambda} C_*^{\lambda + \delta} \xrightarrow{\mathcal{F}_*^{\lambda + \delta}} D_*^{\lambda + 2\delta}
        \]
        are equal to the respective inclusion maps, where \(\mathcal{F}_*^a = \mathcal{F}_*|_{C_*^a}\) and \(\mathcal{G}_*^b = \mathcal{G}_*|_{D_*^b}\) for each \(a, b \in \R\).
    \end{itemize}
    We say that two FCCs are \emph{\(\delta\)-interleaved} if there exists a \(\delta\)-interleaving between them.
    For two FCCs \(\mathcal{C}\) and \(\mathcal{D}\), the \emph{interleaving distance} between \(\mathcal{C}\) and \(\mathcal{D}\) is defined by
    \[
        d_I(\mathcal{C}, \mathcal{D}) := \inf \{ \delta \geq 0 : \mathcal{C} \text{ and } \mathcal{D} \text{ are } \delta\text{-interleaved} \}.
    \]
    Here we follow the convention that \(\inf \varnothing = \infty\).
\end{definition}

The following isometry theorem establishes a tight connection between the matching distance of verbose diagrams and the interleaving distance of FCCs:

\begin{theorem}[{\cite[Theorem~2]{mémolizhou2024}}] \label{thm: isometry}
    For any FCCs \(\mathcal{C}\) and \(\mathcal{D}\),
    \[
        \sup_{q \geq 0} d_M(D_{\mathrm{Ver}, q}(\mathcal{C}), D_{\mathrm{Ver}, q}(\mathcal{D})) = d_I(\mathcal{C}, \mathcal{D}).
    \]
\end{theorem}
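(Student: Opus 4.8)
\section*{Proof proposal}

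The plan is to prove the two inequalities
$\sup_{q\ge 0} d_M\big(D_{\mathrm{Ver},q}(\mathcal{C}),D_{\mathrm{Ver},q}(\mathcal{D})\big)\le d_I(\mathcal{C},\mathcal{D})$ (\emph{stability}) and
$d_I(\mathcal{C},\mathcal{D})\le \sup_{q\ge 0} d_M\big(D_{\mathrm{Ver},q}(\mathcal{C}),D_{\mathrm{Ver},q}(\mathcal{D})\big)$ (\emph{converse stability}) separately. Both arguments rest on the structure theorem of Usher--Zhang \cite{usherzhang2016}: over the field $\mathbb{F}$, every FCC is, up to filtered chain isomorphism, a finite direct sum of \emph{elementary} FCCs, one summand per point of its verbose diagram --- a one-dimensional complex concentrated in degree $q$ generated by $x$ with $\partial x=0$ and $\ell(x)=b$ for each infinite bar $(b,\infty)\in D_{\mathrm{Ver},q}$, and a two-dimensional complex $\mathbb{F}\langle y\rangle \xrightarrow{\partial}\mathbb{F}\langle x\rangle$ in degrees $q+1,q$ with $\ell(y)=d$, $\ell(x)=b$ for each finite bar $(b,d)\in D_{\mathrm{Ver},q}$. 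Since filtered chain isomorphic FCCs have equal verbose diagrams and interleaving distance $0$, we may assume throughout that $\mathcal{C}$ and $\mathcal{D}$ are in this normal form; the elementary pieces then provide orthogonal bases of all the chain groups, which is what makes the dimension bookkeeping below work.

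For \emph{converse stability}, fix $\delta>\sup_{q}d_M\big(D_{\mathrm{Ver},q}(\mathcal{C}),D_{\mathrm{Ver},q}(\mathcal{D})\big)$, so for each $q$ there is a bijection $\phi_q\colon D_{\mathrm{Ver},q}(\mathcal{C})\to D_{\mathrm{Ver},q}(\mathcal{D})$ with $d_\infty(a,\phi_q(a))\le\delta$ for all $a$. By the definition of $d_\infty$ on $\Delta'$, finiteness of these distances forces each finite bar to be matched with a finite bar and each infinite bar with an infinite bar, with both coordinates differing by at most $\delta$. It therefore suffices to (a) construct, for any two elementary FCCs of the same degree and type whose defining parameters differ by at most $\delta$ in each coordinate, an explicit $\delta$-interleaving, which is a direct check against Definition~\ref{def: interleavingdist}; and (b) note that a direct sum of $\delta$-interleavings is a $\delta$-interleaving, which is immediate because every clause of Definition~\ref{def: interleavingdist} is verified summand by summand. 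Assembling the summand interleavings along $\bigsqcup_q\phi_q$ yields a $\delta$-interleaving between $\mathcal{C}$ and $\mathcal{D}$, so $d_I(\mathcal{C},\mathcal{D})\le\delta$; letting $\delta$ decrease gives the inequality.

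For \emph{stability}, fix a $\delta$-interleaving $(\mathcal{F}_*,\mathcal{G}_*)$. The key is a chain-level invariant: for each $q$ and each pair $\lambda,\mu\in\R$, set
\[
    N_q(\mathcal{C};\lambda,\mu):=\dim\big(\mathrm{Ker}\,\partial_q\cap C_q^\lambda\big)-\dim\big(\mathrm{Ker}\,\partial_q\cap C_q^\lambda\cap\partial(C_{q+1}^\mu)\big),
\]
and analogously for $\mathcal{D}$. Reading off the verbose diagram from a singular value decomposition of $\partial_{q+1}$ (equivalently, evaluating on the elementary decomposition) shows that $N_q(\mathcal{C};\lambda,\mu)$ equals the number of points $(b,d)\in D_{\mathrm{Ver},q}(\mathcal{C})$ with $b\le\lambda$ and $d>\mu$ --- the verbose analogue of the fundamental lemma, valid for all $\lambda,\mu$, not only $\lambda\le\mu$. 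Feeding the interleaving maps into this count --- using $\ell_D\circ\mathcal{F}_*\le\ell_C+\delta$ and $\ell_C\circ\mathcal{G}_*\le\ell_D+\delta$ to control filtration levels, and the composition-equals-inclusion conditions of Definition~\ref{def: interleavingdist} to guarantee the relevant restrictions are injective --- yields the sandwich
$N_q(\mathcal{C};\lambda-\delta,\mu+\delta)\le N_q(\mathcal{D};\lambda,\mu)\le N_q(\mathcal{C};\lambda+\delta,\mu-\delta)$ for all $\lambda,\mu$. From this system the standard combinatorial argument used to prove bottleneck stability of persistence diagrams (via induced matchings / box counting) produces, for each $q$, a bijection $D_{\mathrm{Ver},q}(\mathcal{C})\to D_{\mathrm{Ver},q}(\mathcal{D})$ displacing every point by at most $\delta$ in $d_\infty$; the points on the infinity line are handled by letting $\mu\to\infty$, since $N_q(\mathcal{C};\lambda,\mu)$ stabilizes there and counts infinite bars. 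Taking the supremum over $q$ gives $\sup_q d_M\big(D_{\mathrm{Ver},q}(\mathcal{C}),D_{\mathrm{Ver},q}(\mathcal{D})\big)\le\delta$ for every $\delta>d_I(\mathcal{C},\mathcal{D})$.

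The main obstacle is the stability direction. The off-diagonal part of the argument parallels the isometry theorem for persistence modules, but the extra diagonal points of the verbose diagram are genuinely chain-level data, so one must establish the dimension formula for $N_q$ and the sandwich inequality using the chain-level --- not merely homology-level --- clauses of Definition~\ref{def: interleavingdist}, and one must track the bar at infinity and the degreewise supremum carefully. The hypothesis $\sum_q\dim C_q<\infty$ is what legitimizes both the box-counting step and the normal-form reduction.
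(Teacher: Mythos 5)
First, note that the paper does not prove this statement at all: Theorem~\ref{thm: isometry} is imported verbatim from \cite{mémolizhou2024}, so there is no internal proof to compare yours against; I can only assess your argument on its own merits. Your converse-stability direction is fine: reducing to the Usher--Zhang normal form via \cite[Theorem~A]{usherzhang2016}, matching finite bars to finite bars and infinite bars to infinite bars, writing down the obvious generator-to-generator \(\delta\)-interleaving between elementary summands, and observing that direct sums of \(\delta\)-interleavings are \(\delta\)-interleavings (using \(\ell(x+x')=\max\{\ell(x),\ell(x')\}\) to identify \((C\oplus C')^\lambda_*=C^\lambda_*\oplus C'^\lambda_*\)) is a sound and essentially standard route to \(d_I\le\sup_q d_M\). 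Your sandwich inequality in the stability direction is also correct: the factorization argument through \(\mathcal{G}_*\circ\mathcal{F}_*=\mathrm{incl}\) does give \(N_q(\mathcal{C};\lambda-\delta,\mu+\delta)\le N_q(\mathcal{D};\lambda,\mu)\le N_q(\mathcal{C};\lambda+\delta,\mu-\delta)\) for all \(\lambda,\mu\), and \(N_q\) does count verbose points with \(b\le\lambda\), \(d>\mu\) (this is the paper's Proposition~\ref{prop: persbettimeasure}).

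The genuine gap is the final step of the stability direction, where you claim that ``the standard combinatorial argument (induced matchings / box counting)'' extracts a \(\delta\)-bijection from the sandwich. No such argument exists, because the implication is false: the sandwich inequalities are strictly weaker information than a \(\delta\)-matching. Concretely, take \(A=\{(0,10),(1,20)\}\) and \(B=\{(0,20),(1.5,10)\}\), both realizable as degree-\(q\) verbose diagrams of direct sums of elementary two-term FCCs. A direct check over all \((\lambda,\mu)\) shows \(N_A(\lambda-1,\mu+1)\le N_B(\lambda,\mu)\le N_A(\lambda+1,\mu-1)\), i.e.\ the sandwich holds with \(\delta=1\); yet the two possible bijections have costs \(10\) and \(\max\{1.5,1\}=1.5\), so \(d_M(A,B)=1.5>1\). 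This is precisely why the Cohen-Steiner--Edelsbrunner--Harer proof of bottleneck stability needs an interpolation argument on top of box counting, and why Bauer--Lesnick induced matchings work with the interleaving \emph{morphisms} rather than with rank data. Neither tool applies off the shelf here: induced matchings operate on homology-level persistence modules and cannot see the diagonal points of the verbose diagram, and an interpolation lemma producing a family of FCCs interpolating between \(\mathcal{C}\) and \(\mathcal{D}\) under the strict (composition-equals-inclusion) interleaving of Definition~\ref{def: interleavingdist} is neither provided nor obvious. So the hard direction must use the interleaving chain maps (or compatible singular value decompositions) directly, which is the actual content of the proof in \cite{mémolizhou2024}; as written, your argument does not establish \(\sup_q d_M\le d_I\).
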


\section{Previous Results} \label{sec: prevresults}

This section reviews the main results of \cite{hiraokaetal2018} and \cite{shirai2022}.
We begin by (1) interpreting persistence diagrams and verbose diagrams as measures,
(2) introducing a notion of filtration function, 
and (3) outlining a method for generating random point clouds.  

\paragraph{Persistence diagrams as measures.}

Recall the set $\Delta$ in Equation \eqref{eq:Delta}. By adjoining the half-open diagonal with $\Delta$, we obtain \[
    \Delta' := \{(x, y) \in \overline{\R_{\geq 0}}^2: 0 \leq x \leq y \leq \infty\} \setminus \{(\infty, \infty)\},
\]
as illustrated in Figure \ref{subfig: delta'}. We equip \(\overline{\R_{\geq 0}} = [0, \infty]\) with the order topology, and topologize each of \(\Delta\) and \(\Delta'\) with the subspace topology inherited from \(\overline{\R_{\geq 0}}^2\).

\begin{convention}\label{conv:PD as a measure}
Given a filtration $\K$ of a finite simplicial complex, we identify the persistence diagram \(D_q(\K)\) with the discrete measure
\(
    \xi_q(\K) = \sum_{p \in D_q(\K)} \delta_p
\)
on $\Delta$ so that \(\xi_q(\K) \in \mathscr{N}(\Delta)\).

Similarly, given any FCC \(\mathcal{C}\), we regard the verbose diagram \(D_{\mathrm{Ver}, q}(\mathcal{C})\) as the discrete measure
\(
    \xi_{\mathrm{Ver}, q}(\mathcal{C}) = \sum_{p \in D_{\mathrm{Ver}, q}(\mathcal{C})} \delta_p
\)
on $\Delta'$, so that \(\xi_{\mathrm{Ver}, q}(\mathcal{C}) \in \mathscr{N}(\Delta')\).
\end{convention}

We remark that for $r\leq s\in \R_{\geq0}$, \begin{equation} \label{eq: persbettimeasure}
    \beta_q^{r, s}(\K) = \xi_q(\K)([0, r] \times (s, \infty]), 
\end{equation}
as depicted in Figure~\ref{fig: persistent-betti-number-measure}. This equality is called the \emph{Fundamental Lemma of Persistent Homology} \cite[Chapter VII]{edelsbrunner2010}. We will extend this lemma to the setting of verbose diagrams in a later section.

\paragraph{\v{C}ech and Vietoris--Rips filtrations and their Mark-specific variations.} 

A filtration \(\K = {\{K_t\}_{t\in [0,\infty)}}\) of a simplicial complex \(K\) (satisfying \eqref{eq: filtrationassumption(ii)}) can be described by means of the {birth-time} function \(t_{\K}: K \to [0, \infty)\) defined in Equation~\eqref{eq: birthtime}.
For example, in the case of the \v{C}ech or Vietoris--Rips filtrations, 
the vertex set of \(K\) is a finite subset \(V\) of \(\R^N\), and their {birth-time} functions are
\begin{align*}
    t_C(\sigma) &= \inf_{w \in \R^N} \max_{x \in \sigma} \|x - w\|,\ \forall \sigma(\neq \emptyset)\subset V,\ \mbox{and} \\
    t_R(\sigma) &= \mathrm{diam}(\sigma) = \max_{x_1, x_2 \in \sigma} \|x_1 - x_2\|,\ \forall \sigma(\neq \emptyset)\subset V
\end{align*}
respectively, where \(\|\cdot\|\) denotes the Euclidean norm.

These filtrations can be modified when each point is equipped with 
a mark (cf. Section~\ref{subsec: measure}).
For example, given an atomic configuration, suppose that we regard the center of each atom as a vertex in \(\R^3\) and build a similar filtration on these points \cite{hiraoka2016hierarchical}.
It is natural to take the radius of each atom as its mark.
More concretely, let \(R_0\) be the maximum among the radii of all atoms.  
Then the vertex set is a finite subset \(V\) of \(\R^3 \times [0, R_0]\).
Let \(K'\) be the full simplicial complex on \(V\), i.e., the set of all nonempty subsets of \(V\).
In this case, the functions \(t_C', t_R': K' \to [0, \infty)\) defined by
\begin{equation} \label{eq: cechvrvariant}
    \begin{split}
        t_C'(\sigma) &= \inf_{w \in \R^3} \max_{(x, r) \in \sigma} (\|x - w\| - r)^+, \\
        t_R'(\sigma) &= \max_{(x_1, r_1), (x_2, r_2) \in \sigma} (\|x_1 - x_2\| - r_1 - r_2)^+,
    \end{split}
\end{equation}
where \(a^+ = \max\{a, 0\}\) for \(a \in \R\), yield modified versions of the \v{C}ech and Vietoris--Rips filtrations of the given atomic configuration, respectively.
Let \(\overline{B}(x, r)\) denote the closed ball in \(\R^3\) of radius \(r\) centered at \(x\).  
Then \(t_C'(\sigma)\) computes the radius of the smallest closed ball that intersects every \(\overline{B}(x, r)\) with \((x, r) \in \sigma\),  
while \(t_R'(\sigma)\) computes the largest pairwise distance between these closed balls.
We remark that there are other variations of \v{C}ech and Vietoris--Rips filtrations with marks: see e.g. \cite[Examples~ 2.1-2.3]{shirai2022}.

Notice that the values of 
\(t_C'(\sigma)\) and \(t_R'(\sigma)\) depend 
only on the vertices of the input simplex \(\sigma\), not on the other points in the underlying vertex set.
Also, the right-hand sides of Equation~\eqref{eq: cechvrvariant} are defined for any simplex \(\sigma\) whose vertices lie in \(\R^3 \times [0, R_0]\).  
Thus, the domains of \(t_C'\) and \(t_R'\) can be extended to the collection of all nonempty finite subsets $\sigma$ of \(\R^3 \times [0, R_0]\).
Moreover, the ambient space does not need to be three-dimensional, and \(R_0 \geq 0\) can be arbitrary.

For any set \(S\), let \(\mathscr{F}(S)\) denote the set of all nonempty finite subsets of \(S\). 
The functions \(\kappa_C', \kappa_R': \mathscr{F}(\R^N \times [0, R_0]) \to [0, \infty)\) defined by
\begin{equation}\label{eq:kC and kR}
\begin{aligned}
    {\kappa_C'}(\sigma) &= \inf_{w \in \R^N} \max_{(x, r) \in \sigma} (\|x - w\| - r)^+, \\
    {\kappa_R'}(\sigma) &= \max_{(x_1, r_1), (x_2, r_2) \in \sigma} (\|x_1 - x_2\| - r_1 - r_2)^+,
\end{aligned}
\end{equation}
yield modified \v{C}ech and Vietoris--Rips filtrations 
on any finite set $V\subset\R^N \times [0, R_0]$. 
If \(R_0 = 0\), then $\kappa_C'$ and $\kappa_R'$ respectively reduce to the functions that induce the standard \v{C}ech and Vietoris--Rips filtrations on \(V \subset \R^N \times \{0\} \simeq \R^N\).
\begin{definition}[Filtration functions for Standard \v{C}ech and Vietoris--Rips filtrations]
\label{def:VR and Cech}
We denote these special cases for \(\kappa_C'\) and \(\kappa_R'\) respectively by $\kappa_C$ and $\kappa_R$, which are functions $\mathscr{F}(\R^N) \to [0, \infty)$.
\end{definition}

\paragraph{Simplicial filtrations induced from marked point sets in $\R^N$.}
 
A function \(f\) on \(\mathscr{F}(\R^N)\) (resp. \(\mathscr{F}(\R^N \times \M)\)) is called \emph{measurable} if, for each \(k \geq 1\), there exists a measurable function \(\tilde{f}_k\) on \((\R^N)^k\) (resp. \((\R^N \times \M)^k\)) such that
\[
    f(\{x_1, \dots, x_k\}) = \tilde{f}_k(x_1, \dots, x_k),
\]
where \(\tilde{f}_k\) is invariant under permutations of its arguments.

\begin{definition}\label{def:filtration function}
    A measurable function \(\kappa: \mathscr{F}(\R^N ) \to [0, \infty]\) is called a \emph{filtration function (for unmarked point sets)} 
    if it satisfies the following:
    \begin{itemize}
        \item[(K1)] (Monotonicity) \(\kappa(\sigma) \leq \kappa(\tau)\) whenever \(\sigma \subset \tau\);\label{item:filtration function-monotone}
        \item[(K2)] (Translation-invariance) \(\kappa(\sigma) = \kappa(\sigma + x)\) for all \(x \in \R^N\), where \(\sigma + x := \{y + x: y \in \sigma\}\);\label{item:filtration function-translation}
        \item[(K3)] (Regularity) There exists an increasing function \(\rho: [0, \infty] \to [0, \infty]\) with \(\rho(t) < \infty\) for all finite \(t\), such that
        \[
            \|x - y\| \leq \rho(\kappa(\{x,y\}))
        \]
        for all \(x, y \in \R^N \).\label{item:filtration function-regularity}
    \end{itemize}
    
    Similarly, a measurable function \(\kappa: \mathscr{F}(\R^N \times \M) \to [0, \infty]\) is also called a \emph{filtration function (for marked point sets)} 
    if it satisfies (K1) and
    \begin{itemize}
        \item[(K2')] \(\kappa\) is invariant under translations acting on $\R^N$, i.e., \(\kappa(\sigma) = \kappa(\sigma + x)\) for all \(x \in \R^N\), where \(\sigma + x := \{(y + x, m): (y, m) \in \sigma\}\);
        \item[(K3')] There exists an increasing function \(\rho: [0, \infty] \to [0, \infty]\) with \(\rho(t) < \infty\) for all finite \(t\), such that
        \[
            \|x - y\| \leq \rho(\kappa(\{(x, m), (y, n)\}))
        \]
        for all \((x, m), (y, n) \in \R^N \times \M\).
    \end{itemize}
    
\end{definition}
We remark that condition (K3) (resp. (K3')) guarantees that any simplex containing two vertices $x$ and $y$ (resp. $(x,m)$ and $(y,n)$) that are far from each other appears sufficiently late in the filtration. 

\begin{example} \label{ex: modifiedcechvr}
    \(\kappa_C'\) and \(\kappa_R'\) in \eqref{eq:kC and kR} are filtration functions: First, they clearly satisfy conditions (K1) and (K2').
    Condition (K3') also holds by taking \(\rho(t) = 2t + 2R_0\) for \(\kappa_C'\), and \(\rho(t) = t + 2R_0\) for \(\kappa_R'\).
\end{example}
Let \(\pi: \R^N \times \M \to \R^N\) denote the projection onto the first component. We call \(X \in \mathscr{F}(\R^N \times \M)\)  a \emph{simple marked point set} if for each \(x \in \R^N\), the cardinality of \(X \cap \pi^{-1}\{x\}\) is at most one.

Any filtration function $\kappa$ for marked point sets induces a simplicial filtration on any simple marked point set $X$  defined as
\(
    \K^\kappa(X) := \{K^\kappa(X, t)\}_{t \geq 0},
\)
where
\begin{equation} \label{eq: sublevel}
    K^\kappa(X, t) := \{\sigma \subset X : \kappa(\sigma) \leq t\}.
\end{equation}
Similarly, any filtration function $\kappa$ for unmarked point sets induces a simplicial filtration on any $X\in \mathscr{F}(\R^N)$, denoted also by $\K^\kappa(X)$.
We remark that, for any simplex \(\sigma\) that appears in \(\K^{\kappa}(X)\), 
    \begin{equation} \label{eq: filtfuncandbirthtime}
        \kappa(\sigma) = \tK{\K^{\kappa}(X)}(\sigma) \quad  \mbox{(cf. Equation~\eqref{eq: birthtime}).}
    \end{equation}

\begin{definition} \label{def: kappafilt}
   In either the marked or the unmarked setting, we refer to \(\K^\kappa(X)\), defined above, as the \emph{\(\kappa\)-filtration} on \(X\).
\end{definition}

We remark that simplices with infinite \(\kappa\)-value 
do not belong to 
the final complex \(\bigcup_{t\geq 0}K^\kappa(X,t)\) and thus the final complex may not be the full simplicial complex on \(X\). 

The following definition will be useful later on.

\begin{definition} \label{def: realizable}
      Let \(\kappa\) be a filtration function for unmarked point sets (resp. marked point sets) and \(q \geq 0\) an integer.
    A point \((b, d) \in \Delta\) is said to be \emph{\((\kappa, q)\)-realizable by} \(\Xi \in \mathscr{F}(\R^N)\) (resp. a simple marked point set \(\Xi\)
    ) if 
    \(
        (b, d) \in D_q(\K^\kappa(\Xi)),
    \)
    or equivalently, if
    \[
        \xi_q(\K^\kappa(\Xi))(\{(b, d)\}) \geq 1\  \mbox{(cf. Convention~\ref{conv:PD as a measure})}.
    \]
    A point that is \((\kappa, q)\)-realizable by some \(\Xi\) is simply called \emph{\((\kappa, q)\)-realizable}.
    We omit \((\kappa, q)\) and simply write “realizable” when \(\kappa\) and \(q\) are clear from the context.
    We denote by \(R_q = R_q(\kappa) \subset \Delta\) the set of all \((\kappa, q)\)-realizable points.
    
\end{definition}

For example, it is known from \cite[Example~1.8]{hiraokaetal2018} that \(R_q(\kappa_C)\) and \(R_q(\kappa_R)\) (see Definition~\ref{def:VR and Cech}) are cones in \(\Delta\) with apex at the origin, and 
\begin{equation} \label{eq: cechrealizable}
    R_q(\kappa_C) =
    \begin{cases}
        \{0\} \times (0, \infty], &\quad \text{if } q = 0, \\
        \{(b, d) \in \Delta : 0 < b < d < \infty\}, &\quad \text{if } q = 1, 2, \dots, N-1, \\
        \varnothing, &\quad \text{if } q \geq N.
    \end{cases}
\end{equation}

\paragraph{Generation of random point clouds.}

To specify a growing observation window for a point process in $\R^N$, we define a \emph{convex averaging sequence} in \(\mathbb{R}^N\) as a sequence \(\mathcal{L} = \{\Lambda_n\}_n\) of bounded Borel sets in \(\mathbb{R}^N\) satisfying the following conditions:
\begin{itemize}
    \item \(\Lambda_n\) is convex for each $n$;
    \item \(\Lambda_n \subset \Lambda_m\) whenever \(n \leq m\);
    \item \(\displaystyle \sup_n r(\Lambda_n) = \infty\), where \(r(\Lambda)\) denotes the supremum of the radii of open balls contained in \(\Lambda\).
\end{itemize}

The \emph{restriction} of a marked point process \(\Phi\) on \(\R^N\) with marks in \(\M\) to a bounded Borel set \(\Lambda \subset \R^N\) is defined as  
\[
    \Phi_\Lambda(\cdot) := \Phi(\cdot \cap (\Lambda \times \M)).
\]
By the definition of a marked point process, the restricted process \(\Phi_\Lambda\) almost surely produces 
a finite sum $\sum\delta_{(x_i,m_i)}$ of Dirac measures where $\{(x_i,m_i)\in \Lambda\times \M\}$ is a simple marked point set. 
Therefore, via the identification \(\sum\delta_{(x_i,m_i)} \leftrightarrow \{(x_i,m_i)\in \Lambda\times \M\}, \) the process \(\Phi_\Lambda\) is almost surely 
identified with 
a simple marked point set. 
More specifically,  \(\Phi_\Lambda\) is identified with a simple marked point set on the event that
the ground process \((\Phi_\Lambda)_g\) produces no repeated points.

\begin{notation}Given a filtration function \(\kappa\) for marked point sets 
and an integer \(q \geq 0\), let \(\xi_{q,\Lambda}^\kappa\) and \(\xi_{\mathrm{Ver}, q, \Lambda}^\kappa\) respectively denote the degree-\(q\) persistence  and verbose diagrams of the \(\kappa\)-filtration on \(\Phi_\Lambda\), i.e., 
\[
    \xi_{q,\Lambda}^\kappa := \xi_q(\K^\kappa(\Phi_\Lambda))\mbox{ and } \xi_{\mathrm{Ver}, q, \Lambda}^\kappa := \xi_{\mathrm{Ver}, q}(\mathcal{C}(\K^\kappa(\Phi_\Lambda))). 
\]
Also, for any $r\leq s\in \R$, let $\beta_{q, \Lambda}^{\kappa, r, s}$ denote the degree-$q$ persistent Betti number of the \(\kappa\)-filtration on \(\Phi_\Lambda\), i.e.
\[
    \beta_{q, \Lambda}^{\kappa, r, s} := \beta_q^{r, s}(\K^\kappa(\Phi_\Lambda)).
\]
We use \(\xi_{q,\Lambda}^\kappa\), $\xi_{\mathrm{Ver}, q, \Lambda}^\kappa$,  and \(\beta_{q, \Lambda}^{\kappa, r, s}\) even when  \(\kappa\)  is a filtration function for unmarked point sets and \(\Phi\) is an unmarked point process on \(\R^N\).
\end{notation}
\begin{remark}\label{rmk:point processes}
Conditioned on  \((\Phi_\Lambda)_g\) producing no repeated points, \(\xi_{q,\Lambda}^\kappa\) and \(\xi_{\mathrm{Ver}, q, \Lambda}^\kappa\) are point processes on \(\Delta\) and $\Delta'$ respectively (see Figure~\ref{fig: delta-and-delta'}), and \(\beta_{q, \Lambda}^{\kappa, r, s}\) is an integer-valued random variable. The proof of these claims is rather lengthy, and thus we omit it.
\end{remark}

\paragraph{Main results of \cite{hiraokaetal2018} and \cite{shirai2022}.}

We first present the main results of \cite{hiraokaetal2018}, which concern only unmarked point processes on \(\R^N\).

The first theorem asserts that, for $\Lambda_n = [-n/2, n/2)^N$, the asymptotic behavior of \(\xi_{q,\Lambda_n}^\kappa\) is  deterministic.  
More precisely, the normalized version of \(\xi_{q,\Lambda_n}^\kappa\) converges vaguely to a Radon measure on \(\Delta\) as \(n \to \infty\).

\begin{theorem}[{\cite[Theorem~1.5]{hiraokaetal2018}}] \label{thm: vagueconv}
    Assume that \(\Phi\) is a stationary point process on \(\R^N\)
    and let \(\kappa\) be a filtration function.
    For \(n \geq 1\), let \(\Lambda_n = [-n/2, n/2)^N\).
    Then, for each \(q \geq 0\), there exists a unique Radon measure \(\nu_q = \nu_q(\kappa,\Phi)\) on \(\Delta\) such that,
    \[
        \frac{1}{n^N}\,\mathbb{E}[\xi_{q,\Lambda_n}^\kappa] \xrightarrow{v} \nu_q
        \quad\text{as } n \to \infty.
    \]
    Moreover, if \(\Phi\) is ergodic, then, almost surely,
    \[
        \frac{1}{n^N}\,\xi_{q,\Lambda_n}^\kappa \xrightarrow{v} \nu_q
        \quad\text{as } n \to \infty.
    \]
\end{theorem}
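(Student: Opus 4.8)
The plan is to reduce the vague convergence of the random measures $\tfrac{1}{n^N}\xi^\kappa_{q,\Lambda_n}$ to a strong law of large numbers for the persistent Betti numbers $\beta^{\kappa,r,s}_{q,\Lambda_n}$, $0\le r\le s<\infty$, and then to reassemble the deterministic limits of the latter into the Radon measure $\nu_q$. By the Fundamental Lemma~\eqref{eq: persbettimeasure} one has $\xi^\kappa_{q,\Lambda_n}([0,r]\times(s,\infty])=\beta^{\kappa,r,s}_{q,\Lambda_n}$, and the half-open rectangles $R=(r_1,r_2]\times(s_1,s_2]$ with $r_1<r_2\le s_1<s_2\le\infty$ both cover $\Delta$ and form a $\pi$-system generating its Borel $\sigma$-algebra; moreover $\xi^\kappa_{q,\Lambda_n}(R)$ is, by inclusion--exclusion, a fixed $\mathbb{Z}$-linear combination of four quantities $\xi^\kappa_{q,\Lambda_n}([0,r]\times(s,\infty])=\beta^{\kappa,r,s}_{q,\Lambda_n}$ with $r\le s$. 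In view of Lemma~\ref{lem: vagueconvequiv} it therefore suffices to establish: (i) for each fixed $r\le s$, $\tfrac{1}{n^N}\beta^{\kappa,r,s}_{q,\Lambda_n}$ converges in $L^1$ (and, under ergodicity, almost surely) to a deterministic constant $\hat\beta^{r,s}_q$; (ii) the two-parameter array $(\hat\beta^{r,s}_q)_{r\le s}$ extends to a unique Radon measure $\nu_q$ on $\Delta$; and (iii) an inner/outer approximation by rectangles upgrades (i) to the inequalities of Lemma~\ref{lem: vagueconvequiv}(ii) for every relatively compact Borel set, on a single almost sure event.

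Step (i) is the technical heart, and it rests on a \emph{local stability estimate}: inserting one simplex into the $\kappa$-filtration perturbs the ranks of the relevant boundary maps by at most $1$ and hence changes every $\beta^{r,s}_q$ by at most a universal constant; by monotonicity (K1) and regularity (K3), a $q$- or $(q+1)$-simplex with $\kappa$-value $\le s$ has all its vertices within $\rho(s)$ of any one of them, so this perturbation is controlled by a local function of the point configuration. Since a filtration function depends only on the simplex (not on the ambient set), the within-cube part of $\K^\kappa(\Phi_{\Lambda_n})$ for a grid of cubes $Q$ of side $L$ is exactly $\bigsqcup_Q\K^\kappa(\Phi_Q)$, and passing to the full filtration only adds cross-cube simplices, those of interest having a vertex within $\rho(s)$ of a grid wall. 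Thus
\[
\Bigl|\beta^{r,s}_q(\K^\kappa(\Phi_{\Lambda_n}))-\sum_Q\beta^{r,s}_q(\K^\kappa(\Phi_Q))\Bigr|\ \le\ C\!\!\sum_{v\in\Phi_{\Lambda_n}\cap W_L}\binom{\Phi(B(v,\rho(s)))}{q},
\]
where $W_L$ is the $\rho(s)$-neighbourhood of the grid walls, of relative volume $O(\rho(s)/L)$; by the finite-moments assumption the expectation of the right side is $O(\rho(s)L^{-1})\cdot\vol{\Lambda_n}$. Combined with the crude bound $0\le\beta^{r,s}_q(\K^\kappa(\Phi_{\Lambda_n}))\le C(q,s)\sum_{v\in\Phi_{\Lambda_n}}\binom{\Phi(B(v,\rho(s)))}{q}$ (so that $\mathbb{E}[\beta^{\kappa,r,s}_{q,\Lambda_n}]=O(\vol{\Lambda_n})$), near-superadditivity plus a Fekete-type argument gives convergence of $\tfrac{1}{n^N}\mathbb{E}[\beta^{\kappa,r,s}_{q,\Lambda_n}]$ to a limit $\hat\beta^{r,s}_q<\infty$.

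For the almost sure statement under ergodicity, I would run a two-stage argument: for fixed $L$ the field $Q\mapsto\beta^{r,s}_q(\K^\kappa(\Phi_Q))$ is stationary and integrable in $Q$, so the multidimensional ergodic theorem yields $\tfrac{1}{\vol{\Lambda_n}}\sum_Q\beta^{r,s}_q(\K^\kappa(\Phi_Q))\to L^{-N}\mathbb{E}[\beta^{r,s}_q(\K^\kappa(\Phi_{[0,L)^N}))]$ a.s.; letting $L\to\infty$ along a subsequence and using the $O(\rho(s)/L)$ boundary correction to bound the gap (with a diagonal argument to keep the exceptional null set fixed and independent of $L$, of the countable family of rectangles, and of $q$) produces the common limit $\hat\beta^{r,s}_q$ a.s. Step (ii) is then bookkeeping: for a fixed diagram the inclusion--exclusion combination $\beta^{r_2,s_1}_q-\beta^{r_1,s_1}_q-\beta^{r_2,s_2}_q+\beta^{r_1,s_2}_q$ literally counts the diagram points in $(r_1,r_2]\times(s_1,s_2]$ and so is $\ge0$, hence the same combination of the $\hat\beta$'s is $\ge0$; this non-negative, finitely additive rectangle function is finite on every such rectangle and $\sigma$-finite (as $\Delta=\bigcup_k\{(x,y):x\le k,\ y>1/k\}$ with each piece of finite mass by the moment bound), so it extends by Carath\'eodory to a unique Radon measure $\nu_q$. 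Finally, for step (iii), pick a countable dense $D\subset[0,\infty]$ avoiding the at-most-countable atoms of the marginals of $\nu_q$, so that $\nu_q(\partial R)=0$ for rectangles $R$ with corners in $D$; intersecting the a.s. events of (i) over all such $R$ gives $\tfrac{1}{n^N}\xi^\kappa_{q,\Lambda_n}(R)\to\nu_q(R)$ simultaneously, and sandwiching an arbitrary relatively compact $B$ between finite unions of such $R$'s yields Lemma~\ref{lem: vagueconvequiv}(ii), hence $\tfrac{1}{n^N}\xi^\kappa_{q,\Lambda_n}\xrightarrow{v}\nu_q$ a.s.; repeating the sandwich at the level of expectations, with the $L^1$-convergence of (i) in place of the a.s. one, gives $\tfrac{1}{n^N}\mathbb{E}[\xi^\kappa_{q,\Lambda_n}]\xrightarrow{v}\nu_q$ without the ergodicity hypothesis.

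The step I expect to be the main obstacle is the passage from convergence of expectations to the almost sure statement in (i): the functional $\Lambda\mapsto\beta^{r,s}_q(\K^\kappa(\Phi_\Lambda))$ is neither additive nor monotone in a way that feeds directly into a subadditive ergodic theorem, so one must interleave the multidimensional ergodic theorem on a coarse grid with the $O(\rho(s)/L)$ boundary correction and a limit $L\to\infty$, while being careful that all the exceptional null sets can be chosen uniformly. The measure-theoretic repackaging in (ii)--(iii) is routine but needs care near the diagonal and near the line $\{y=\infty\}$, where ``relatively compact'' must be made explicit as bounds $x\le M$ and $y-x\ge\varepsilon$.
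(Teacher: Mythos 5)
You should note at the outset that the paper does not prove Theorem~\ref{thm: vagueconv} at all: it is imported verbatim from \cite[Theorem~1.5]{hiraokaetal2018}, so there is no in-paper argument to compare against. Measured against the original proof in that reference, your proposal follows essentially the same route: convert rectangle masses of the diagram into persistent Betti numbers via the fundamental lemma, prove a strong law for \(\beta^{\kappa,r,s}_{q,\Lambda_n}\) by a nearly-additive decomposition over a grid of cubes (with the boundary/cross-cube error controlled through (K1), (K3) and the all-finite-moments assumption), and then reassemble the limits \(\widehat\beta^{r,s}_q\) into a Radon measure and upgrade to vague convergence through continuity rectangles and a sandwich argument. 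The outline, the stability estimate for adding simplices, the \(O(\rho(s)/L)\) wall correction, and the identification of relatively compact sets in \(\Delta\) as \(\{x\le M,\ y-x\ge\varepsilon\}\) are all consistent with how the cited proof is organized.

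Two points deserve care. First, the step ``the field \(Q\mapsto\beta^{r,s}_q(\K^\kappa(\Phi_Q))\) is stationary, so the multidimensional ergodic theorem yields convergence to \(L^{-N}\mathbb{E}[\beta^{r,s}_q(\K^\kappa(\Phi_{[0,L)^N}))]\) a.s.''\ does not follow as stated: ergodicity of \(\Phi\) under the full group of \(\R^N\)-translations does not imply ergodicity of the induced \(L\mathbb{Z}^N\)-lattice action (a randomly shifted lattice is the standard counterexample), so the lattice ergodic theorem only gives the conditional expectation with respect to the \(L\mathbb{Z}^N\)-invariant \(\sigma\)-field. The usual repair is to show, using your boundary estimate, that the eventual a.s.\ limit of \(n^{-N}\beta^{\kappa,r,s}_{q,\Lambda_n}\) is invariant under all continuous translations of \(\Phi\) and is therefore a.s.\ constant by \(\R^N\)-ergodicity; this is exactly the kind of argument you flag as ``the main obstacle'' but do not actually supply. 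Second, in step (ii) a nonnegative, finitely additive rectangle function does not extend by Carath\'eodory without \(\sigma\)-additivity on the semiring; you need either a right-continuous (Stieltjes-type) modification of \((r,s)\mapsto\widehat\beta^{r,s}_q\), or, more in the spirit of the original proof, to extract a vague subsequential limit of the normalized expected diagrams (possible by your uniform bounds on compacta) and identify it on continuity rectangles. Both issues are repairable with standard arguments, but as written they are the two places where the sketch does not yet close.
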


For $x\in \R^N$ and $r\in \R_{\geq0}$, let $B(x,r)$ denote the open ball of radius $r$ centered at $x$. For compact subsets \(A\) and \(B\) of \(\R^N\), the \emph{Hausdorff distance} between \(A\) and \(B\) \cite{burago2001} is defined as
\[
    d_H(A, B) = \inf\{r\geq 0: A^r\supset B \mbox{ and } B^r\supset A \}
    ,
\]
where $A^r=\bigcup_{a\in A}B(a,r)$.
Given any point process \(\Phi\) on a second-countable, locally compact Hausdorff space \(S\), by \(\Theta_{\Phi}\), let us denote the probability distribution of \(\Phi\) on \(\mathscr{N}(S)\).
Also, given any Borel set \(\Lambda \subset \R^N\),
let \(r_\Lambda: \mathscr{N}(\R^N) \to \mathscr{N}(\Lambda)\) denote the function \(\mu \mapsto r_\Lambda(\mu)\), where, for any Borel set \(A\) of \(\Lambda\), \[r_\Lambda(\mu)(A) := \mu(A).\]

Under suitable conditions on a point process \(\Phi\) on $\R^N$ and a filtration function \(\kappa\), the support of the limiting measure \(\nu_q\) in the previous theorem is characterized in terms of the set \(R_q(\kappa)\) of $(\kappa,q)$-realizable points as follows.

\begin{theorem}[{\cite[Theorem~1.9]{hiraokaetal2018}}] \label{thm: measuresupport}\textcolor{white}{.}
    \begin{enumerate}[leftmargin=0em]
        \item[] \label{enumitem: unmarkedassumption}
        (Assumption) Let \(\Phi\) be a stationary point process on \(\R^N\).
        Assume that, for every compact set \(\Lambda \subset \R^N\), the distribution \(\Theta_{r_\Lambda(\Phi)}\) is absolutely continuous with respect to \(\Theta_{r_\Lambda(\Psi)}\), where \(\Psi\) is any  homogeneous Poisson point process on \(\R^N\) with intensity $\lambda\in (0,\infty)$, and that the Radon--Nikodym derivative \(d\Theta_{r_\Lambda(\Phi)} / d\Theta_{r_\Lambda(\Psi)}\) is strictly positive \(\Theta_{r_\Lambda(\Psi)}\)-almost surely.
        
        Further assume that the filtration function \(\kappa: \mathscr{F}(\R^N) \to [0, \infty)\) is $c_\kappa$-Lipschitz continuous with respect to the Hausdorff distance for some $c_\kappa>0$; that is, 
        for all \(\sigma, \sigma' \in \mathscr{F}(\R^N )\),
        \[
            |\kappa(\sigma) - \kappa(\sigma')|
            \leq c_\kappa d_H(\sigma, \sigma').
        \]
       
        \item[] \label{enumitem: conciseconclusion}
        (Conclusion) Then, for every \(q \geq 0\),
        \[
            \mathrm{supp}(\nu_q) = \overline{R_q(\kappa)},
        \]
        where the closure is taken in \(\Delta\).
    \end{enumerate}
\end{theorem}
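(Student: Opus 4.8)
The plan is to prove the two inclusions $\mathrm{supp}(\nu_q)\subseteq\overline{R_q(\kappa)}$ and $\overline{R_q(\kappa)}\subseteq\mathrm{supp}(\nu_q)$ separately, using in each direction the characterization of vague convergence in Lemma~\ref{lem: vagueconvequiv} together with Theorem~\ref{thm: vagueconv}. For the first inclusion, suppose $p=(b,d)\notin\overline{R_q(\kappa)}$; then there is an open relatively compact box $B\ni p$ in $\Delta$ with $\overline{B}\cap R_q(\kappa)=\varnothing$. Since $\xi_{q,\Lambda_n}^\kappa(\overline{B})$ counts points of the persistence diagram $D_q(\K^\kappa(\Phi_{\Lambda_n}))$ lying in $\overline{B}$, and no such point can be $(\kappa,q)$-realizable by a point cloud (as any point of any such diagram is realizable by restricting to the relevant finite vertex set — here one uses that $\kappa(\sigma)$ depends only on the vertices of $\sigma$), we get $\xi_{q,\Lambda_n}^\kappa(\overline{B})=0$ almost surely for every $n$. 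Hence $\mathbb{E}[\xi_{q,\Lambda_n}^\kappa](\overline{B})=0$, and by Lemma~\ref{lem: vagueconvequiv}(ii) applied to $n^{-N}\mathbb{E}[\xi_{q,\Lambda_n}^\kappa]\xrightarrow{v}\nu_q$, we obtain $\nu_q(B)\le\nu_q(B^\circ)\le\liminf_n n^{-N}\mathbb{E}[\xi_{q,\Lambda_n}^\kappa](\overline{B})=0$, so $p\notin\mathrm{supp}(\nu_q)$. This direction does not use the absolute-continuity hypothesis.

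For the reverse inclusion, fix $p=(b,d)\in R_q(\kappa)$; it suffices to show $\nu_q(U)>0$ for every open neighborhood $U$ of $p$, and then take closures (using that $\mathrm{supp}(\nu_q)$ is closed). Choose a finite point cloud $\Xi_0\subset\R^N$, say with $k$ points, that realizes $p$, i.e.\ $p\in D_q(\K^\kappa(\Xi_0))$. The key step is a \emph{stability plus positive-probability} argument: by stability of persistence diagrams under the bottleneck distance and the $c_\kappa$-Lipschitz hypothesis on $\kappa$ (which makes $\K^\kappa(\cdot)$ continuous with respect to the Hausdorff distance on the vertex set), there is an $\varepsilon>0$ such that whenever a point configuration $\Xi$ has exactly $k$ points, each within Hausdorff-distance $\varepsilon$ of the corresponding point of $\Xi_0$, and $\Xi$ contains no further points inside a large enough ball $B(0,\mathrm{R})$ (with $\mathrm{R}$ chosen via the regularity function $\rho$ so that the extra points of $\Phi$ outside $B(0,\mathrm R)$ cannot interfere with the simplices carrying the class realizing $p$), the diagram $D_q(\K^\kappa(\Xi))$ still has a point in $U$. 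Now translate $\Xi_0$ so that it lies well inside $\Lambda_n$ for $n$ large; the event $A_n$ that $\Phi_{\Lambda_n}$ has exactly $k$ points in the prescribed $\varepsilon$-balls and none in the rest of $B(0,\mathrm R)$ has, for a homogeneous Poisson process $\Psi$, strictly positive probability $c>0$ independent of $n$; by the absolute-continuity hypothesis with strictly positive Radon--Nikodym derivative, $\mathbb{P}(\Phi\in A_n)>0$ as well. On $A_n$ we have $\xi_{q,\Lambda_n}^\kappa(U)\ge 1$, so $\mathbb{E}[\xi_{q,\Lambda_n}^\kappa](U)\ge\mathbb{P}(\Phi\in A_n)$. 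Moreover, by translation-invariance (K2) of $\kappa$ and stationarity of $\Phi$, we can fit on the order of $c'n^N$ disjoint translated copies of the configuration inside $\Lambda_n$, and the expected number of them that ``succeed'' is $\ge c''n^N$; by linearity of expectation and the fact that each success contributes a point of the diagram in $U$ (different copies contribute distinct points), $\mathbb{E}[\xi_{q,\Lambda_n}^\kappa](U)\ge c'' n^N$ for large $n$. Hence by Lemma~\ref{lem: vagueconvequiv}, choosing a slightly smaller relatively compact box $B$ with $\overline{B}\subset U$, $\nu_q(\overline B)\ge\limsup_n n^{-N}\mathbb{E}[\xi_{q,\Lambda_n}^\kappa](B)$ — wait, one must be careful and instead use $\nu_q(U)\ge\nu_q((\overline B)^\circ)\ge\limsup$ is the wrong direction; the correct move is: pick $B$ relatively compact open with $p\in B$, $\overline B\subset U$, then $\nu_q(U)\ge\nu_q(B)\ge\nu_q(\text{open }B')\ge\liminf_n n^{-N}\mathbb E[\xi^\kappa_{q,\Lambda_n}](B'')$ for suitable nested boxes, and arrange the copies to land in $B''$; this gives $\nu_q(U)>0$.

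The main obstacle is the second inclusion, specifically making the ``stability plus positive probability'' argument rigorous: one must (a) control the fact that $\Phi$ contributes extra points \emph{everywhere} in $\Lambda_n$, not just a clean copy of $\Xi_0$ — this is handled by the regularity axiom (K3), which guarantees simplices spanning far-apart vertices are born late, so distant points of $\Phi$ cannot destroy or perturb the feature realizing $p$ within the relevant parameter range; (b) transfer positivity of a Poisson event to the general stationary $\Phi$ via the Radon--Nikodym hypothesis, which is exactly what that hypothesis is designed for; and (c) get the $\Omega(n^N)$ lower bound on $\mathbb E[\xi^\kappa_{q,\Lambda_n}](U)$ rather than a mere positivity, which requires packing disjointly-supported translated copies and arguing their contributions to the diagram are distinct and all lie in $U$ — here the disjointness of supports plus (K3) ensures the diagrams essentially decompose as a disjoint union over the copies. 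A secondary technical point is the careful bookkeeping with Lemma~\ref{lem: vagueconvequiv}: since vague convergence only pins down $\nu_q$ between $\nu_q(B^\circ)$ and $\nu_q(\overline B)$, one should work with an open box strictly inside $U$ and pass to its interior/closure as needed. Once these pieces are in place, combining both inclusions and taking closures yields $\mathrm{supp}(\nu_q)=\overline{R_q(\kappa)}$.
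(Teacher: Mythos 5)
Two remarks before the substance: this theorem is quoted from \cite{hiraokaetal2018} and is not proved in the paper itself; the closest proof the paper contains is that of its verbose analogue, Theorem~\ref{thm: measuresupportverbose}, built on Lemma~\ref{lem: matchingdistancecontinuity} (stability), Lemma~\ref{lem: marker} (the ``marker'' formalization of your isolation step via (K3)), Lemma~\ref{lem: subsetofsupport}, and Remark~\ref{rmk: PPPdistribution}. Your architecture matches that proof (and the original one in \cite{hiraokaetal2018}): the easy inclusion \(\mathrm{supp}(\nu_q)\subset\overline{R_q}\) from Lemma~\ref{lem: vagueconvequiv} plus the fact that every diagram point of \(\K^\kappa(\Phi_{\Lambda_n})\) is realizable; the hard inclusion from Lipschitz/bottleneck stability, isolation of the realizing configuration through the regularity function \(\rho\), transfer of Poisson positivity to \(\Phi\) via the Radon--Nikodym hypothesis, and a stationarity/packing argument giving \(\mathbb{E}[\xi^\kappa_{q,\Lambda_n}](B)\geq c\,n^N\) --- you are right that mere positivity of one event is not enough, and the quantifier ``for any \(X\)'' in the marker definition is exactly what makes your ``distinct copies contribute distinct points'' induction go through, so that part of your sketch is sound though only gestured at.

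The genuine gap is the case \(d=\infty\). The claim is for every \(q\geq0\), and \(R_0\) generically contains essential points \((b,\infty)\) (e.g.\ \((0,\infty)\in R_0(\kappa_C)\) by Equation~\eqref{eq: cechrealizable}). Your reverse inclusion is tied to a finite ``relevant parameter range'': the isolation radius \(\mathrm{R}\) is chosen through \(\rho\) at a level at least the death time of the realizing class, which does not exist when \(d=\infty\); worse, once the copy is surrounded by the rest of \(\Phi_{\Lambda_n}\) the essential class acquires a finite death, so no diagram point ever sits literally at \((b,\infty)\), and the argument as written proves nothing about such points. The fix uses the topology of \(\Delta\) at the infinity line: a neighborhood \(U\) of \((b,\infty)\) contains \(\{|x-b|<\varepsilon\}\times(M,\infty]\), and one chooses the isolation radius via \(\rho(M)\) so the perturbed, surrounded copy still produces a point in \(U\) (the paper's verbose proof instead replaces \((b,\infty)\) by a nearby realizable point with finite death inside \(U\) and reduces to the finite-death case; either route works, but some such step must be added). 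Two smaller repairs: your final chain of inequalities should be \(\nu_q(U)\geq\nu_q(\overline B)\geq\limsup_n n^{-N}\mathbb{E}[\xi^\kappa_{q,\Lambda_n}](B)\) for an open relatively compact \(B\) with \(\overline B\subset U\) (the interior/\(\liminf\) half of Lemma~\ref{lem: vagueconvequiv} is the one used in the first inclusion, not here), and the decomposition of the diagram over copies holds only for diagram points with death below the isolation scale, which is exactly what the marker lemma's Claims 2--3 establish; both are fixable along the lines you already half-note.
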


We remark that the Lipschitz continuity assumption in Theorem~\ref{thm: measuresupport} is not a strong requirement. For example, both \(\kappa_C\) and \(\kappa_R\) satisfy this condition.

The next two theorems concern persistent Betti numbers.
The first establishes a strong law of large numbers, while the second provides a central limit theorem for the case of homogeneous Poisson point processes.

\begin{theorem}[{\cite[Theorem~1.11]{hiraokaetal2018}}] \label{thm: persbettiSLLN}
    Assume that \(\Phi\) is a stationary point process on \(\R^N\)
    and let \(\kappa\) be a filtration function.
    For \(n \geq 1\), let \(\Lambda_n = [-n/2, n/2)^N\).
    Then,
    for any \(0 \leq r \leq s < \infty\) and \(q \geq 0\), there exists a constant \(\widehat{\beta}^{\kappa, r, s}_q\) such that
    \[
        \frac{\mathbb{E}[\beta_{q, \Lambda_n}^{\kappa, r, s}]}{n^N} \to \widehat{\beta}^{\kappa, r,s}_q \quad \text{as } n \to \infty.
    \]
    In addition, if \(\Phi\) is ergodic, then
    \[
        \frac{\beta_{q, \Lambda_n}^{\kappa, r, s}}{n^N} \to \widehat{\beta}^{\kappa, r, s}_q \quad \text{almost surely as } n \to \infty.
    \]
\end{theorem}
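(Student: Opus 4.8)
\emph{Strategy.} Fix $q\ge 0$ and $0\le r\le s<\infty$, and write $Q_m:=[-m/2,m/2)^N$. The plan is to show that $\Lambda\mapsto\beta_q^{r,s}\big(\K^\kappa(\Phi_\Lambda)\big)$ behaves like an \emph{almost additive} functional of the point configuration, and then to invoke a multiparameter pointwise ergodic theorem. (One cannot simply read the statement off Theorem~\ref{thm: vagueconv} via Equation~\eqref{eq: persbettimeasure}: the relevant set $[0,r]\times(s,\infty]$ fails to be relatively compact in $\Delta$ when $r=s$, and even for $r<s$ there is no a priori control of the $\nu_q$-mass on its boundary.) The decisive structural input is the locality built into axiom~(K3): if $\kappa(\sigma)\le s$, then by~(K1) every pair of vertices $x,y$ of $\sigma$ satisfies $\kappa(\{x,y\})\le s$, whence $\|x-y\|\le\rho(s)$; thus $\beta_q^{r,s}$ depends on the configuration only at scales $\le\rho(s)$, and $\rho(s)<\infty$ precisely because $s<\infty$.

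\emph{An additivity estimate.} First I would record a single-simplex stability fact, read off from the description $\beta_q^{r,s}=\dim\mathcal{Z}_q(K_r)-\dim\big(\mathcal{Z}_q(K_r)\cap\mathcal{B}_q(K_s)\big)$ in Equation~\eqref{eq: persbetti}: inserting one simplex into a filtered complex leaves $\beta_q^{r,s}$ unchanged unless that simplex has dimension $q$ or $q+1$, in which case $\beta_q^{r,s}$ changes by at most~$1$. Next, for disjoint bounded Borel sets $\Lambda_1,\Lambda_2\subset\R^N$: since $\kappa(\sigma)$ depends only on $\sigma$ (Equation~\eqref{eq: filtfuncandbirthtime}), the filtration $\K^\kappa(\Phi_{\Lambda_1})\cup\K^\kappa(\Phi_{\Lambda_2})$ is obtained from $\K^\kappa(\Phi_{\Lambda_1\cup\Lambda_2})$ by deleting exactly the \emph{crossing} simplices---those with a vertex in $\Lambda_1$ and a vertex in $\Lambda_2$---and this family is closed under taking cofaces, so the deletion proceeds one simplex at a time through honest subcomplexes, while $\beta_q^{r,s}$ is exactly additive over the disjoint union (in unreduced homology, so no correction term appears). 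Combining these,
\[
    \big|\beta_q^{r,s}(\Phi_{\Lambda_1\cup\Lambda_2})-\beta_q^{r,s}(\Phi_{\Lambda_1})-\beta_q^{r,s}(\Phi_{\Lambda_2})\big|\ \le\ N_s(\Lambda_1,\Lambda_2),
\]
where $N_s(\Lambda_1,\Lambda_2)$ counts the crossing $q$- and $(q+1)$-simplices with birth time $\le s$; by~(K3) each of these has diameter $\le\rho(s)$, hence all of its vertices lie within distance $\rho(s)$ of $\overline{\Lambda_1}\cap\overline{\Lambda_2}$.

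\emph{Passing to the limit.} Fix $m\ge 1$ and, for $z\in\mathbb{Z}^N$, set $Q_m^z:=mz+Q_m$. For each $n$ the cubes $Q_m^z$ contained in $\Lambda_n$ number $\asymp(n/m)^N$ and cover $\Lambda_n$ up to a slab of volume $O(n^{N-1}m)$. Iterating the additivity estimate over these cubes (and treating the slab by the same single-simplex stability), stationarity of $\Phi$ gives
\[
    \Big|\,\mathbb{E}\big[\beta_{q,\Lambda_n}^{\kappa,r,s}\big]-\#\{z\in\mathbb{Z}^N:Q_m^z\subset\Lambda_n\}\cdot\mathbb{E}\big[\beta_q^{r,s}(\Phi_{Q_m})\big]\,\Big|\ \le\ \mathbb{E}\big[\text{total crossing count}\big]+O(n^{N-1}m).
\]
Every crossing simplex counted has diameter $\le\rho(s)$ and a vertex in the $\rho(s)$-neighborhood of the union of the cube boundaries, a region of volume $O(n^N/m)$; covering it by $O(n^N/m)$ unit cells and bounding the number of diameter-$\le\rho(s)$ simplices meeting a cell by a fixed power of the number of points in a bounded enlargement of that cell, the all-finite-moments hypothesis and stationarity give $\mathbb{E}[\text{total crossing count}]\le C(s)\,n^N/m$. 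Hence, with $a_n:=\mathbb{E}[\beta_{q,\Lambda_n}^{\kappa,r,s}]/n^N$ and $g(m):=\mathbb{E}[\beta_q^{r,s}(\Phi_{Q_m})]/m^N$, we obtain $\limsup_n|a_n-g(m)|\le C(s)/m$ for every $m$, so $(a_n)$ is Cauchy; let $\widehat{\beta}^{\kappa,r,s}_q$ be its limit (one then checks $g(m)\to\widehat{\beta}^{\kappa,r,s}_q$ as well). For the almost sure statement, assume $\Phi$ ergodic: for each fixed $m$ the $\mathbb{Z}^N$-indexed fields $z\mapsto\beta_q^{r,s}(\Phi_{Q_m^z})$ and $z\mapsto(\text{crossing count attached to }Q_m^z)$ are stationary, integrable and ergodic, being obtained by applying a fixed measurable functional to the shifts of the ergodic process $\Phi$. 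The multiparameter pointwise ergodic theorem, applied pathwise together with the displayed estimate, yields $g(m)-C(s)/m\le\liminf_n n^{-N}\beta_{q,\Lambda_n}^{\kappa,r,s}\le\limsup_n n^{-N}\beta_{q,\Lambda_n}^{\kappa,r,s}\le g(m)+C(s)/m$ almost surely; intersecting over $m\in\mathbb{N}$ and letting $m\to\infty$ identifies the almost sure limit as $\widehat{\beta}^{\kappa,r,s}_q$.

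\emph{Main obstacle.} I expect the crux to be the combinatorial/moment bookkeeping in the last step---extracting the bound $\mathbb{E}[\text{total crossing count}]\le C(s)\,n^N/m$, uniformly in $n$, from the all-finite-moments hypothesis and~(K3)---together with verifying rigorously the single-simplex stability claim and the identity that $\K^\kappa(\Phi_{\Lambda_1})\cup\K^\kappa(\Phi_{\Lambda_2})$ is $\K^\kappa(\Phi_{\Lambda_1\cup\Lambda_2})$ with a coface-closed family of simplices removed. The ergodic-theorem input is routine once those fields are recognized as factors of $\Phi$.
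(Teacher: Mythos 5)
This statement is not proved in the paper at all: it is quoted verbatim from \cite{hiraokaetal2018} (their Theorem~1.11), so there is no in-paper argument to compare against, and what you have written is in effect a reconstruction of the original proof. Your route is indeed essentially the one taken there: persistent Betti numbers are nearly additive (single-simplex stability plus exact additivity over disjoint unions), the error is carried by ``crossing'' $q$- and $(q{+}1)$-simplices born by time $s$, which by (K1) and (K3) have diameter at most $\rho(s)$ and are therefore confined to an $O(n^N/m)$-volume neighborhood of the grid boundaries, the all-finite-moments hypothesis controls their expected number, and a spatial ergodic theorem finishes the almost sure statement. Your preliminary remark is also correct and important: one cannot deduce this theorem from Theorem~\ref{thm: vagueconv} via Equation~\eqref{eq: persbettimeasure} (in \cite{hiraokaetal2018} the logical order is in fact the reverse: convergence of persistent Betti numbers is an ingredient in constructing $\nu_q$).

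There is, however, one genuine gap, and it sits exactly where you declare the argument ``routine.'' You justify ergodicity of the $\mathbb{Z}^N$-indexed fields $z \mapsto \beta_q^{r,s}(\Phi_{Q_m^z})$ by saying they are factors of the ergodic process $\Phi$. Factors preserve ergodicity of a \emph{fixed} group action, but here you are restricting the acting group from $\R^N$ to the lattice $m\mathbb{Z}^N$, and ergodicity under all $\R^N$-translations (the hypothesis of the theorem, as defined in Section~\ref{subsec: measure}) does \emph{not} pass to a subgroup: a randomly shifted lattice $U+\mathbb{Z}^N$ is $\R^N$-stationary and ergodic, yet its integer-shift system is trivial. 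Consequently the multiparameter pointwise ergodic theorem only yields convergence of the lattice averages to a conditional expectation given the $m\mathbb{Z}^N$-invariant $\sigma$-field, which may be random; the same issue affects your pathwise bound on the crossing-count field, since a bound on the \emph{mean} of that conditional expectation is not an almost sure bound. The standard repairs are either to run the averaging through the continuous $\R^N$-action (a Wiener-type ergodic theorem applied to a localized functional, which is what avoids the subgroup problem in \cite{hiraokaetal2018}), or to observe that the almost sure $\limsup$ and $\liminf$ of $n^{-N}\beta_{q,\Lambda_n}^{\kappa,r,s}$ are invariant under all $\R^N$-translations of $\Phi$ (shifting the window changes $\beta$ by lower-order boundary terms, again via (K3) and the moment bounds) and hence are a.s.\ constant by $\R^N$-ergodicity. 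With one of these fixes, and with the single-simplex stability and disjoint-union additivity written out (both of which are correct as you state them), your sketch does become a proof along the lines of the cited one.
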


Let \(\mathcal{N}(\mu, \sigma^2)\) denote the normal distribution with mean \(\mu\) and variance \(\sigma^2\), and let \(\xrightarrow{d}\) denote convergence in distribution of random variables \cite[p.~116]{durrett2019probability}.

\begin{theorem}[{\cite[Theorem~1.12]{hiraokaetal2018}}] \label{thm: persbettiCLT}
    Let \(\Phi\) be a homogeneous Poisson point process on \(\R^N\) with unit intensity, and let \(\kappa\) be a filtration function.
    For \(n \geq 1\), let \(\Lambda_n = [-n/2, n/2)^N\).
    Then, for any \(0 \leq r \leq s < \infty\) and \(q \geq 0\), there exists a constant \(\sigma_{r, s}^2 = \sigma_{r, s}^2(\kappa, q)\) such that
    \[
        \frac{\beta_{q, \Lambda_n}^{\kappa, r, s} - \mathbb{E}[\beta_{q, \Lambda_n}^{\kappa, r, s}]}{n^{N/2}} \xrightarrow{d} \mathcal{N}(0, \sigma_{r, s}^2) \quad \text{as } n \to \infty.
    \]
\end{theorem}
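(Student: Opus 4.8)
The plan is to treat $F_n := \beta_{q,\Lambda_n}^{\kappa,r,s}$ as a functional of the unit-intensity Poisson process $\Phi$ restricted to $\Lambda_n$ and to verify the hypotheses of a general central limit theorem for geometric functionals of a Poisson point process with short-range dependence (the strategy parallels the one used for Betti numbers, e.g. \cite{yogeshwaran2017}). Two structural properties are required. \textbf{(P1)} A \emph{bounded, all-moments add-one cost}: for $p\in\Lambda_n$ and finite $X\subset\Lambda_n$, the increment $D_pF_n(X):=\beta_q^{r,s}(\K^\kappa(X\cup\{p\}))-\beta_q^{r,s}(\K^\kappa(X))$ is dominated by a local random variable with finite moments of every order, uniformly in $n$ and $p$. \textbf{(P2)} A \emph{finite range of second-order influence}: the iterated difference $D^2_{p,p'}F_n(X)$ vanishes whenever $\|p-p'\|$ exceeds a fixed constant depending only on $\kappa$ and $s$. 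Granting (P1)--(P2), one shows as in Theorem~\ref{thm: persbettiSLLN} that $n^{-N}\mathrm{Var}(F_n)$ converges to some $\sigma^2_{r,s}\in[0,\infty)$ (translation covariance of the local contributions together with summability of covariances supplied by (P2); points within a bounded distance of $\partial\Lambda_n$ affect mean and variance only at order $n^{N-1}$ and drop out after normalization). If $\sigma^2_{r,s}>0$, the general CLT yields $(F_n-\mathbb{E}[F_n])/n^{N/2}\xrightarrow{d}\mathcal{N}(0,\sigma^2_{r,s})$; if $\sigma^2_{r,s}=0$, then $\mathrm{Var}\big((F_n-\mathbb{E}[F_n])/n^{N/2}\big)\to0$, so the left-hand side tends to $0=\mathcal{N}(0,0)$ by Chebyshev's inequality.

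To obtain (P1), I would first record the elementary fact that appending a single simplex to a filtered simplicial complex changes $\beta_q^{r,s}$ by at most $1$: reading $\beta_q^{r,s}$ off the barcode via the Fundamental Lemma~\eqref{eq: persbettimeasure}, each simplex is either positive or negative in the persistence algorithm and its insertion alters the diagram by the birth or death of a single bar, which shifts $\xi_q(\K)([0,r]\times(s,\infty])$ by at most one. Adding $p$ to $X$ amounts to inserting all simplices containing $p$; processing these in order of increasing dimension (faces before cofaces) shows that $|D_pF_n(X)|$ is at most the number of simplices of $\K^\kappa(X\cup\{p\})$ that contain $p$ and have $\kappa$-value $\le s$. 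By monotonicity (K1) and the regularity axiom (K3)/(K3'), every vertex of such a simplex lies within distance $\rho(s)<\infty$ of $p$, so this count is at most $2^{\,|X\cap \overline{B}(p,\rho(s))|}$. Since $\Phi$ is Poisson with unit intensity, $|X\cap\overline{B}(p,\rho(s))|$ is stochastically dominated by a Poisson variable of mean $\vol{\overline{B}(0,\rho(s))}$, and $\mathbb{E}\big[z^{\mathrm{Poisson}(\mu)}\big]=e^{\mu(z-1)}<\infty$ for every $z>0$; hence $D_pF_n(X)$ has finite moments of all orders, uniformly in $n$ and $p$.

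For (P2), suppose $\|p-p'\|>2\rho(s)$, so that $\overline{B}(p,\rho(s))$ and $\overline{B}(p',\rho(s))$ are disjoint. By (K3)/(K3') no simplex of $\K^\kappa(X\cup\{p,p'\})$ with $\kappa$-value $\le s$ has a vertex in each ball; consequently, in $\K^\kappa(X\cup\{p,p'\})$ the simplices introduced by $p$ and those introduced by $p'$ span disjoint regions, the boundary operators of the pair $K_r\hookrightarrow K_s$ acquire a block-triangular form, and the persistence pairing (hence the contribution to $\beta_q^{r,s}$) of the simplices created near $p$ is unaffected by those created near $p'$. A decomposition/incremental argument then gives that $\beta_q^{r,s}$ of the joint configuration minus that of $X$ equals the sum of the two separate increments, i.e. $D^2_{p,p'}F_n(X)=0$. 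Combined with (P1)---which also bounds all moments of $|D^2_{p,p'}F_n|$ and confines it to pairs at distance $\le 2\rho(s)$---this places $F_n$ within the scope of the general Poisson CLT, completing the argument.

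\textbf{Main obstacle.} The delicate step is (P2). Persistent Betti numbers are genuinely global: a homology class alive from time $r$ to time $s$ may be representable only by cycles that span the entire window $\Lambda_n$, so ``additivity of the influence of far-apart points'' cannot be deduced from any naive locality of $\beta_q^{r,s}$; it has to be extracted from the block structure of the boundary operators, and carrying this out for the \emph{pair} $(K_r\hookrightarrow K_s)$ rather than for a single complex is where essentially all the work lies. Secondary, routine obstacles are the window-boundary bookkeeping (the $O(n^{N-1})$ edge effects) and the separate treatment of the degenerate case $\sigma^2_{r,s}=0$, in which the limiting Gaussian collapses to a point mass.
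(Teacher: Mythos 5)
This statement is quoted from \cite{hiraokaetal2018} (their Theorem~1.12); the paper gives no proof of it, so your attempt has to be measured against the known argument. Your step (P1) is fine: bounding the add-one cost by the number of simplices containing $p$ with $\kappa$-value at most $s$, localizing via (K1) and (K3) to the ball $\overline{B}(p,\rho(s))$, and using Poisson moment bounds is essentially the moment condition verified in the literature. The fatal problem is (P2). The second-order difference $D^2_{p,p'}\beta_q^{r,s}$ does \emph{not} vanish once $\|p-p'\|>2\rho(s)$, and no block-triangular decomposition of boundary operators can make it vanish, because homology couples distant insertions through the ambient configuration $X$. Concrete counterexample (Vietoris--Rips, $r=s$): for $q=0$, let $X$ be two long disjoint chains of points, with $p$ bridging the two chains at one end and $p'$ bridging them at the far end; then adding either point alone drops $\beta_0$ by one, while adding both drops it by one in total, so $D^2_{p,p'}\beta_0 = 1$ although $p$ and $p'$ are arbitrarily far apart. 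For $q=1$, take $X$ along a huge circle with two gaps at antipodal positions: neither single insertion creates a loop, but inserting both does, again giving $D^2_{p,p'}\beta_1=1$ at arbitrary range. So the ``finite range of second-order influence'' hypothesis is simply false for (persistent) Betti numbers, which is exactly the non-locality you flag as the ``main obstacle''---but it is not an obstacle to be overcome within your scheme; it invalidates the scheme.

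The actual route in \cite{hiraokaetal2018} (following the treatment of ordinary Betti numbers in \cite{yogeshwaran2017}) avoids any finite-range assumption: one proves that the add-one cost at the origin is \emph{weakly stabilizing}, i.e.\ $D_0\beta_q^{r,s}(\K^\kappa(\Phi_{\Lambda_n}\cup\{0\}))-D$-type increments converge almost surely as the window grows (this uses quantitative bounds comparing persistent Betti numbers of nested complexes by counts of simplices in the difference), and then invokes Penrose's martingale central limit theorem for weakly stabilizing Poisson functionals with uniformly bounded moments of the add-one cost; the variance asymptotics $n^{-N}\mathrm{Var}(F_n)\to\sigma^2_{r,s}$ come out of that theorem rather than from a covariance-summability argument, which your outline also leaves unsubstantiated (the SLLN of Theorem~\ref{thm: persbettiSLLN} gives no variance control). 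If you want to salvage a second-difference approach you would have to replace exact vanishing by moment and decorrelation bounds on $D^2_{p,p'}F_n$ (Malliavin--Stein second-order Poincar\'e inequalities), but establishing the required decay for persistent Betti numbers is a substantial task and is not what the cited proof does.
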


\begin{remark} \label{rmk: shirai}In \cite{shirai2022}, Theorems \ref{thm: vagueconv} and \ref{thm: persbettiSLLN} above are extended to the marked point process setting. Namely, \cite[Theorem~2.6]{shirai2022} extends Theorems \ref{thm: vagueconv} as follows.
 Let \(\Phi\) be a stationary \emph{marked} point process on \(\R^N\) with marks in \(\M\)
 and let \(\kappa: \mathscr{F}(\R^N \times \M) \to [0, \infty]\) be a filtration function.
    Also, let \(\mathcal{L} = \{\Lambda_n\}_n\) be \emph{any} convex averaging sequence in $\R^N$. Then, the same conclusion as Theorem~\ref{thm: vagueconv} holds.
        Furthermore, the limiting measure \(\nu_q\) in the conclusion is independent of the choice of \(\mathcal{L}\).
                
        \cite[Theorem~2.7]{shirai2022} extends Theorem~\ref{thm: persbettiSLLN} in a similar way.
\end{remark}

\section{Limit Theorems for Verbose Diagrams and Extended Persistent Betti Numbers} \label{sec: mainresults}

In this section, we extend the results in Section~\ref{sec: prevresults} to the setting of the verbose diagram (Theorems \ref{thm: vagueconvverbose}, \ref{thm: measuresupportverbose}, \ref{thm: persbettiSLLNextended}, and \ref{thm: persbettiCLTextended}). In addition, 
we also present extra new results regarding the total mass of the limiting measure and introduce the extended persistent Betti numbers (Theorem \ref{thm: limitmeasuretotalmass} and Definition~\ref{def: extendedPBN}). While the statements of the aforementioned theorems are presented in Section \ref{subsec:our results}, their proofs are deferred to Section \ref{subsec: pf}. 

\subsection{Our Results}\label{subsec:our results}

We extend Theorem~\ref{thm: vagueconv} and its marked analogue (Remark~\ref{rmk: shirai}) to the setting of verbose diagrams.

\begin{theorem}[Strong law of large numbers for verbose diagrams] \label{thm: vagueconvverbose}
    Assume that \(\Phi\) is a stationary marked point process on \(\R^N\) with marks in \(\M\),
    and let \(\kappa\) be a filtration function.
    Then, for each \(q \geq 0\), there exists a unique Radon measure \(\nu_q' = \nu_q'(\kappa,\Phi)\) on \(\Delta'\) such that, for any convex averaging sequence \(\mathcal{L} = \{\Lambda_n\}_n\) in \(\R^N\),
    \[
        \frac{1}{\vol{\Lambda_n}}\,\mathbb{E}[\xi_{\mathrm{Ver}, q, \Lambda_n}^\kappa] \xrightarrow{v} \nu_q'
        \quad\text{as } n \to \infty.
    \]
    Moreover, if \(\Phi\) is ergodic, then almost surely,
    \[
        \frac{1}{\vol{\Lambda_n}}\,\xi_{\mathrm{Ver}, q, \Lambda_n}^\kappa \xrightarrow{v} \nu_q'
        \quad\text{as } n \to \infty.
    \]
\end{theorem}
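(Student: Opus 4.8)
The plan is to reduce Theorem~\ref{thm: vagueconvverbose} to the already-established Theorem~\ref{thm: vagueconv} (and its marked analogue in Remark~\ref{rmk: shirai}) via the ``shifting the diagram'' device described in the introduction. Concretely, given a filtration function $\kappa$, I would construct a modified filtration function $\kappa^{\uparrow}$ (for instance, by adjoining an extra coordinate to each mark and using it to force every $(q{+}1)$-simplex to appear strictly later than any of the $q$-cycles it might kill, or more simply by a ``grading shift'' on the chain level) so that the degree-$q$ verbose diagram of $\K^{\kappa}(X)$ becomes the degree-$q$ \emph{ordinary} persistence diagram of $\K^{\kappa^{\uparrow}}(X)$, \emph{translated} by a fixed vector along the $y$-axis. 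The key point is that after the shift, every diagonal point $(t,t)$ of the verbose diagram is moved off the diagonal to $(t, t{+}c)$ for some uniform $c>0$ (or, more robustly, to a point whose death coordinate is bounded away from its birth coordinate), so the shifted verbose diagram is a genuine persistence diagram to which Theorem~\ref{thm: vagueconv}/Remark~\ref{rmk: shirai} applies, yielding a limiting Radon measure $\widetilde{\nu}_q$ on $\Delta$.

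Once this is set up, I would push $\widetilde{\nu}_q$ back by the inverse translation to define $\nu_q'$ on $\Delta'$, and verify that the vague convergence $\tfrac{1}{\vol{\Lambda_n}}\mathbb{E}[\xi_{\mathrm{Ver},q,\Lambda_n}^{\kappa}] \xrightarrow{v} \nu_q'$ (and its almost-sure counterpart under ergodicity) is exactly the image under the inverse translation of the convergence statement for the shifted persistence diagrams. Vague convergence is preserved under proper continuous maps, and a translation of $\R^2$ is a homeomorphism, so in principle this transfer is formal; I would invoke Lemma~\ref{lem: vagueconvequiv}, checking the interior/closure inequalities $\mu(B^{\circ}) \le \liminf \mu_n(B) \le \limsup \mu_n(B) \le \mu(\overline{B})$ for relatively compact Borel $B \subset \Delta'$, transported from the corresponding $B \pm (\text{shift}) \subset \Delta$. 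Uniqueness of $\nu_q'$ follows from uniqueness of $\widetilde{\nu}_q$ in Theorem~\ref{thm: vagueconv}, and independence of the convex averaging sequence $\mathcal{L}$ is inherited from Remark~\ref{rmk: shirai}. I would also need to check that $\kappa^{\uparrow}$ is itself a legitimate filtration function in the sense of Definition~\ref{def:filtration function}, i.e.\ that it still satisfies (K1), (K2$'$), (K3$'$) — monotonicity and translation-invariance are immediate from the construction, and regularity holds with $\rho$ adjusted by an additive constant.

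The main obstacle, as flagged in the ``main ideas'' paragraph, is the presence of the infinity line $[0,\infty)\times\{\infty\}$ in $\overline{\R_{\geq 0}}^2$: a ``translation along the $y$-axis'' is not a self-homeomorphism of $\overline{\R_{\geq 0}}^2$ because it must fix the points at $\infty$, so the map $\Delta' \to \Delta$ I want to use is not literally a translation near the top edge. I would handle this by working with a homeomorphism $\Phi_c \colon \Delta' \to \Delta_c$ (onto its image in $\Delta$) that equals the honest translation $(x,y)\mapsto(x,y+c)$ on the bounded part $\{y<\infty\}$ and is suitably defined (e.g.\ the identity on the first coordinate, with $\infty \mapsto \infty$) on the infinity line; the subtlety is that such a map is a homeomorphism onto its image but that image may not be all of $\Delta$, and properness/relative-compactness arguments must be done carefully so that compactly supported test functions and relatively compact sets are matched up correctly across $\Phi_c$. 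A secondary technical point: I must confirm that the ``essential infinite'' bars of the verbose diagram (those of the form $(\ell(x_i),\infty)$) transform correctly under the shift — they should stay on the infinity line — and that the finite-but-diagonal bars are the only ones genuinely moved, so that the combinatorial identity ``shifted verbose diagram $=$ persistence diagram of shifted filtration'' holds on the nose; this is where I would lean on the singular-value-decomposition description of $D_{\mathrm{Ver},q}$ from Section~\ref{subsec: verbose} and on Equation~\eqref{eq: filtfuncandbirthtime}.
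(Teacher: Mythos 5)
Your proposal follows essentially the same route as the paper: the paper's ``$t$-shift of $\kappa$ at degree $q$'' (adding $t$ to the $\kappa$-value of every simplex of dimension $>q$, so that deaths are delayed and the degree-$q$ verbose diagram becomes the ordinary persistence diagram of the shifted filtration translated by $(0,t)$, verified via the singular-value-decomposition description as in Lemma~\ref{lem: shift}), followed by an application of Theorem~\ref{thm: vagueconv}/Remark~\ref{rmk: shirai} and a careful transfer of vague convergence back to $\Delta'$ using a homeomorphism that handles the infinity line (Lemma~\ref{lem: topondelta'}), which is exactly the subtlety you flag and propose to treat the same way. The only cosmetic difference is your alternative suggestion of encoding the shift in an extra mark coordinate, which the paper avoids by modifying the filtration function directly; otherwise the construction, the key lemmas, and the uniqueness/ergodic arguments coincide with the paper's proof.
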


Next, we will see that, unlike \cite{hiraokaetal2018,shirai2022}, we can analyze the total mass of the limiting measure \(\nu_q'\), \emph{under the assumption that the filtration function \(\kappa\) is finite-valued}, i.e. $\kappa(\sigma)<\infty$ for all $\sigma\in \mathscr{F}(\R^N \times \M)$  (cf. the row including Theorem \ref{thm: limitmeasuretotalmass}  in Table \ref{tab: summaryofresults}); more specifically,
we will show that this assumption ensures the number of points in the verbose diagram is fully determined by the number of points in the input point cloud, unlike in the concise diagram (see \Cref{lem:verbose-cardinality}). This property will be instrumental in establishing Theorem~\ref{thm: limitmeasuretotalmass}.

\begin{theorem}[Total mass of the limiting verbose diagram] \label{thm: limitmeasuretotalmass}
In the setting of Theorem~\ref{thm: vagueconvverbose}, assume that $\Phi$ has intensity \(\lambda\in (0,\infty]\) and $\kappa$ is finite-valued.     
    Then, we have
    \[
        \nu_q'(\Delta') =
        \begin{cases}
            \lambda, & \text{if } q = 0, \\
            \infty, & \text{if } q \geq 1.
        \end{cases}
    \]
\end{theorem}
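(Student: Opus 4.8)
The plan is to compute $\nu_q'(\Delta')$ by evaluating the limiting measure on the whole space, using the strong law from Theorem~\ref{thm: vagueconvverbose} together with the key cardinality lemma (Lemma~\ref{lem:verbose-cardinality}) for finite-valued $\kappa$. First I would take a convenient convex averaging sequence, say $\Lambda_n = [-n/2, n/2)^N$, so that $\vol{\Lambda_n} = n^N$. The idea is that, because $\kappa$ is finite-valued, every simplex on the vertex set $\Phi_{\Lambda_n}$ eventually enters the filtration, so the FCC $\mathcal{C}(\K^\kappa(\Phi_{\Lambda_n}))$ is built from the \emph{full} simplicial complex on $\Phi_{\Lambda_n}$. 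By Lemma~\ref{lem:verbose-cardinality}, the total number of points in $D_{\mathrm{Ver}, q}(\mathcal{C}(\K^\kappa(\Phi_{\Lambda_n})))$ is then a deterministic function of $k := |\Phi_{\Lambda_n}|$ alone — indeed, for the full complex on $k$ vertices, $|D_{\mathrm{Ver},q}| = \dim C_q = \binom{k}{q+1}$, since a singular value decomposition of $\partial_{q+1}$ produces exactly one verbose point per basis element of $C_q$ (the $r$ pairs $(\ell(x_i),\ell(y_i))$ coming from $\mathrm{Im}\,\partial_{q+1}$ together with the $m-r$ pairs $(\ell(x_i),\infty)$ exhaust an orthogonal basis of $C_q = \mathrm{Ker}\,\partial_q \oplus (\text{complement})$, and the verbose diagram records one point for each of the $\dim C_q$ basis vectors of the domain of $\partial_q$ — I would double-check the exact count against Lemma~\ref{lem:verbose-cardinality}, but the point is only that it is a fixed polynomial $P_q(k)$ in $k$ of degree $q+1$).

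Next I would apply Theorem~\ref{thm: vagueconvverbose} with the test that $\xi_{\mathrm{Ver},q,\Lambda_n}^\kappa(\Delta') = |D_{\mathrm{Ver},q}(\mathcal{C}(\K^\kappa(\Phi_{\Lambda_n})))| = P_q(|\Phi_{\Lambda_n}|)$, and $\Delta'$ is not relatively compact, so vague convergence does not directly give convergence of total masses; I must argue separately. For $q = 0$: here $P_0(k) = k$ (each vertex contributes exactly one point to the degree-$0$ verbose diagram, namely its birth at $(t_\K(\text{vertex}), \cdot)$ possibly with $\cdot = \infty$). Hence $\tfrac{1}{n^N}\mathbb{E}[\xi_{\mathrm{Ver},0,\Lambda_n}^\kappa(\Delta')] = \tfrac{1}{n^N}\mathbb{E}[\Phi_{\Lambda_n}(\R^N \times \M)] = \tfrac{1}{n^N}\,\mathbb{E}[\Phi(\Lambda_n \times \M)] \to \lambda$ by stationarity and the definition of intensity (and the assumption $\lambda \in (0,\infty]$; when $\lambda = \infty$ the limit is $\infty$, consistent with the claim if one reads the $q=0$ case as $\lambda$ including the value $\infty$, though I would state the theorem for $\lambda \in (0,\infty)$ in that case or handle $\lambda = \infty$ with a truncation). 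To upgrade the mean-measure statement to the value of $\nu_0'(\Delta')$ I would use that $\nu_0'$ is a Radon measure and that mass can only be lost in the limit, never gained: by Lemma~\ref{lem: vagueconvequiv} applied to relatively compact exhausting sets $B$ of $\Delta'$, $\nu_0'(B^\circ) \le \liminf \tfrac{1}{n^N}\mathbb{E}[\xi_{\mathrm{Ver},0,\Lambda_n}^\kappa(B)] \le \liminf \tfrac{1}{n^N}\mathbb{E}[\xi_{\mathrm{Ver},0,\Lambda_n}^\kappa(\Delta')] = \lambda$, so $\nu_0'(\Delta') \le \lambda$ by monotone convergence over an exhaustion. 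For the reverse inequality $\nu_0'(\Delta') \ge \lambda$ I need to show no mass escapes — escaping mass could go off to the corner $(\infty,\infty)$ or to infinity along the diagonal. I would control this via the regularity condition (K3)/(K3') and a second-moment / uniform-integrability argument: the birth times of the degree-$0$ points are bounded by the $\kappa$-values of pairs, and since we only need a lower bound, it suffices to note that the degree-$0$ concise part already contributes the off-diagonal points studied in \cite{hiraokaetal2018,shirai2022} whose limiting mass is known, plus the diagonal points are nonnegative; more cleanly, I would show directly that for any $\varepsilon$ there is a relatively compact $B$ with $\limsup \tfrac{1}{n^N}\mathbb{E}[\xi_{\mathrm{Ver},0,\Lambda_n}^\kappa(\Delta' \setminus B)] < \varepsilon$, using Jensen's inequality and finiteness of moments of $\Phi$ to bound the expected number of points with large birth time. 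This tightness step is the main obstacle.

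For $q \ge 1$: here $P_q(k) = \binom{k}{q+1}$ grows like $k^{q+1}/(q+1)!$, so $\mathbb{E}[\xi_{\mathrm{Ver},q,\Lambda_n}^\kappa(\Delta')] \sim \mathbb{E}\big[\binom{\Phi(\Lambda_n \times \M)}{q+1}\big]$. Since $\Phi$ has intensity $\lambda > 0$, the factorial-moment $\mathbb{E}[\Phi(\Lambda_n \times \M)(\Phi(\Lambda_n \times \M)-1)\cdots]$ is at least of order $(\text{something growing faster than } n^N)$ — more precisely, $\mathbb{E}[\Phi(\Lambda_n\times\M)] \ge c\, n^N \to \infty$, and by Jensen applied to the convex function $\binom{\,\cdot\,}{q+1}$, $\mathbb{E}[\binom{\Phi(\Lambda_n\times\M)}{q+1}] \ge \binom{\mathbb{E}[\Phi(\Lambda_n\times\M)]}{q+1} \gtrsim (cn^N)^{q+1}/(q+1)!$, so $\tfrac{1}{n^N}\mathbb{E}[\xi_{\mathrm{Ver},q,\Lambda_n}^\kappa(\Delta')] \to \infty$. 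Combining with the one-sided inequality $\nu_q'(B^\circ) \le \liminf \tfrac{1}{n^N}\mathbb{E}[\xi_{\mathrm{Ver},q,\Lambda_n}^\kappa(B)]$ is the wrong direction, so instead I would argue: were $\nu_q'(\Delta')$ finite, then $\nu_q'$ restricted to any relatively compact $B$ would have mass $\le \nu_q'(\Delta') < \infty$, and I would derive a contradiction by exhibiting, for arbitrarily large $M$, a relatively compact $B_M \subset \Delta'$ on which $\liminf \tfrac{1}{n^N}\mathbb{E}[\xi_{\mathrm{Ver},q,\Lambda_n}^\kappa(B_M)] \ge M$ — i.e. I need to show that a positive fraction of the $\binom{k}{q+1}$ verbose points have \emph{bounded} birth and death coordinates. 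This again follows from (K3)/(K3') and translation-invariance: a positive-density subcollection of $(q+1)$-tuples of points of $\Phi_{\Lambda_n}$ lies within a bounded region and hence spans simplices of bounded $\kappa$-value, forcing their corresponding verbose-diagram points into a fixed compact subset of $\Delta'$ — and there are $\Theta(n^N)$-times-$M$ many such for any fixed $M$ once $n$ is large (e.g. count $(q+1)$-tuples contained in unit cubes tiling $\Lambda_n$; there are $\Theta(n^N)$ cubes and, by Jensen and $\lambda > 0$, the expected number of bounded-$\kappa$ verbose points per cube is positive). Passing this lower bound through Lemma~\ref{lem: vagueconvequiv} gives $\nu_q'(\overline{B_M}) \ge \nu_q'(B_M^\circ)$ can be made arbitrarily large, contradicting finiteness; hence $\nu_q'(\Delta') = \infty$. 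The bookkeeping to make the "positive fraction with bounded coordinates" claim precise — in particular ruling out that every such verbose point is forced onto the diagonal with a death coordinate that still counts, and confirming via Proposition~\ref{prop: cycleboundary} that these points genuinely land in a \emph{compact} (hence bounded-away-from-the-noncompact-corner) piece of $\Delta'$ — is where I expect the real work, and I would lean on Lemma~\ref{lem:verbose-cardinality} and Proposition~\ref{prop: cycleboundary} exactly as the authors indicate in the "main ideas" paragraph.
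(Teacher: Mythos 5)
Your proposal gets the easy ingredients right (Lemma~\ref{lem:verbose-cardinality} plus Jensen for the divergence of the normalized total count when \(q \geq 1\); the normalized vertex count tending to \(\lambda\) when \(q = 0\); the awareness that vague convergence alone does not control total mass), but the two places where you defer the ``real work'' are precisely where the argument as sketched breaks. For \(q \geq 1\), the step ``a positive-density subcollection of \((q+1)\)-tuples lies within a bounded region and hence spans simplices of bounded \(\kappa\)-value'' is not justified under the standing hypotheses: Theorem~\ref{thm: limitmeasuretotalmass} assumes only (K1), (K2'), (K3') and finite-valuedness, and (K3') bounds \(\|x-y\|\) by \(\rho(\kappa)\), not \(\kappa\) by the geometry; a measurable, monotone, translation-invariant filtration function can be unbounded on configurations inside a unit cube (e.g.\ \(\kappa(\sigma)=\max_{x\neq y\in\sigma}\bigl(\|x-y\|+\|x-y\|^{-1}\bigr)\) satisfies all the axioms). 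Moreover, even granting such a bound, tiling \(\Lambda_n\) by \emph{unit} cubes gives a constant expected contribution per cube, hence only \(\liminf_n \frac{1}{n^N}\mathbb{E}[\xi_{\mathrm{Ver},q,\Lambda_n}^\kappa(B)] \geq c > 0\), not \(\geq M\) for arbitrary \(M\); to reach arbitrary \(M\) the block size must grow with \(M\). The paper's proof supplies exactly the missing mechanism: pick \(n_0\) with \(\mu_{n_0}(\Delta')>M\) (your Jensen step), then by continuity from below pick \(x_0\) with \(\mu_{n_0}(A_{x_0})>M\), where \(A_{x_0}=\{(x,y)\in\Delta':x\leq x_0\}\) is compact --- note that compactness in \(\Delta'\) requires only a bounded \emph{birth} coordinate, so your worry about bounding death coordinates and about diagonal points is moot --- and then use Corollary~\ref{cor: cycleboundarymeasure}~\ref{item:cycleboundary1} (\(\xi(A_{x_0})=\dim\mathcal{Z}_q\) at time \(x_0\)) together with superadditivity of cycle-space dimension over the \(2^N\) disjoint sub-boxes of \(\Lambda_{2n_0}\) to show \(\mu_{2^k n_0}(A_{x_0})\) is non-decreasing in \(k\); Lemma~\ref{lem: vagueconvequiv} then gives \(\nu_q'(A_{x_0})>M\). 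No local boundedness of \(\kappa\) is ever needed.

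For \(q=0\) you explicitly leave the lower bound (``no mass escapes'') as the main obstacle, and the tools you name (Jensen, finiteness of moments) are not the ones that close it. The identification that does close it is again Corollary~\ref{cor: cycleboundarymeasure}~\ref{item:cycleboundary1}: \(\xi_{\mathrm{Ver},0,\Lambda_n}^\kappa(A_a)\) equals the number of points of \(\Phi_{\Lambda_n}\) with singleton \(\kappa\)-value at most \(a\), i.e.\ \(\Phi(\Lambda_n\times\widetilde{\kappa}^{-1}([0,a]))\), whose normalized expectation is \(\mathbb{E}[\Phi](\Lambda_1\times\widetilde{\kappa}^{-1}([0,a]))\), independent of \(n\); applying Lemma~\ref{lem: vagueconvequiv} to the compact sets \(A_a\) (lower bound) and the relatively compact sets \(B_a\) (upper bound) and taking the supremum over \(a\) yields exactly \(\lambda\), including the case \(\lambda=\infty\) that your version would have to exclude or patch by truncation. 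So the overall strategy you outline is compatible with the paper's, but the two steps you flag as ``where the real work is'' --- the birth-truncation-equals-vertex/cycle-count identification via Proposition~\ref{prop: cycleboundary}, and the growing-block superadditivity replacing the unit-cube/bounded-\(\kappa\) claim --- are genuinely missing and, in the form stated, the bounded-\(\kappa\)-on-bounded-regions step would fail for admissible \(\kappa\).
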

This theorem may be interpreted as follows. For \(q = 0\), (i) the order of growth of the number of points in the diagram coincides with (ii) the order of growth of the window's volume---making the total mass equal to the intensity.  
For \(q \geq 1\), (i) exceeds (ii), causing the total mass to diverge to infinity.

We remark that the filtration function $\kappa$ in the preceding theorem can, for example, be the filtration functions for the \v{C}ech or Vietoris--Rips filtrations (Definition~\ref{def:VR and Cech}).   

For any filtration function $\kappa$ for unmarked or marked point sets and integer $q\geq 0$, let us define \emph{$(\kappa,q)$-realizability} of a point $(b,d)\in\Delta'$ simply by, in Definition~\ref{def: realizable},  replacing $D_q(\K^\kappa(\Xi))$ and $\xi_{q}(\K^\kappa(\Xi))$ with $D_{\mathrm{Ver}, q}(\mathcal{C}(\K^\kappa(\Xi)))$ and $ \xi_{\mathrm{Ver}, q}(\mathcal{C}(\K^\kappa(\Xi)))$, respectively. We denote by \(R_q' = R_q'(\kappa) \subset \Delta'\) the set of all \((\kappa, q)\)-realizable points in \(\Delta'\).

Recall that \(\pi: \R^N \times \M \to \R^N\) denote the projection and that for a marked point process \(\Phi\), \(\Phi_g\) denotes its ground process.
We extend Theorem~\ref{thm: measuresupport} to:

\begin{theorem}[Support of the limiting verbose diagram] \label{thm: measuresupportverbose} \textcolor{white}{.}
    \begin{enumerate}[leftmargin=0em]
        \item[](Assumption) \label{enumitem: markedassumption}
         Let \(\Phi\) be a stationary marked point process on \(\R^N\) with marks in \(\M\).
        Assume that, for every compact set \(\Lambda \subset \R^N\), the distribution \(\Theta_{r_\Lambda(\Phi_g)}\) is absolutely continuous with respect to \(\Theta_{r_\Lambda(\Psi)}\), where \(\Psi\) is any homogeneous Poisson point process on \(\R^N\) with intensity $\lambda\in (0,\infty)$, 
        and that the Radon--Nikodym derivative \(d\Theta_{r_\Lambda(\Phi_g)} / d\Theta_{r_\Lambda(\Psi)}\) is strictly positive \(\Theta_{r_\Lambda(\Psi)}\)-almost surely.
        
        Further assume that a filtration function \(\kappa: \mathscr{F}(\R^N\times \M) \to [0,\infty)\) is Lipschitz continuous with respect to the Hausdorff distance on $\R^N$; that is, there exists a constant \(c_\kappa > 0\) such that for all \(\sigma, \sigma' \in \mathscr{F}(\R^N \times \M)\),
        \[
            |\kappa(\sigma) - \kappa(\sigma')|
            \leq c_\kappa d_H(\pi(\sigma), \pi(\sigma')).
        \]
        
        \item[](Conclusion) \label{enumitem: verboseconclusion}
         Then, for every \(q \geq 0\), the support of the limiting measure \(\nu_q'\) satisfies
        \[
            \mathrm{supp}(\nu_q') = \overline{R_q'(\kappa)},
        \]
        where the closure is taken in \(\Delta'\).
    \end{enumerate}
\end{theorem}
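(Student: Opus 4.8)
The plan is to adapt the strategy of Theorem~\ref{thm: measuresupport} from \cite{hiraokaetal2018}, replacing the bottleneck-distance stability of persistence diagrams by the corresponding stability for verbose diagrams. The inclusion $\mathrm{supp}(\nu_q') \subseteq \overline{R_q'(\kappa)}$ should follow relatively directly: if $(b,d) \notin \overline{R_q'(\kappa)}$, then there is an open neighborhood $U$ of $(b,d)$ in $\Delta'$ disjoint from $R_q'(\kappa)$, so for \emph{every} simple marked point set $\Xi$ the measure $\xi_{\mathrm{Ver},q}(\mathcal{C}(\K^\kappa(\Xi)))$ assigns zero mass to $U$. Taking $U$ relatively compact and applying the second characterization of vague convergence (Lemma~\ref{lem: vagueconvequiv}) to the almost-sure convergence in Theorem~\ref{thm: vagueconvverbose}, together with the fact that $\frac{1}{\vol{\Lambda_n}}\xi_{\mathrm{Ver},q,\Lambda_n}^\kappa(U) = 0$ a.s., forces $\nu_q'(U) \le \liminf \frac{1}{\vol{\Lambda_n}}\xi_{\mathrm{Ver},q,\Lambda_n}^\kappa(\overline{U})$; one has to be slightly careful and instead use that $\nu_q'(U^\circ) \le \liminf \mu_n(U)$ for a relatively compact Borel $U$ with $(b,d)\in U^\circ$ and $\overline U \cap R_q'(\kappa) = \varnothing$, giving $\nu_q'(U^\circ)=0$, hence $(b,d)\notin\mathrm{supp}(\nu_q')$.

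For the reverse inclusion $\overline{R_q'(\kappa)} \subseteq \mathrm{supp}(\nu_q')$, it suffices (since the support is closed) to show $R_q'(\kappa) \subseteq \mathrm{supp}(\nu_q')$. Fix a realizable point $(b,d)$, realized by a finite simple marked point set $\Xi = \{(z_1,m_1),\dots,(z_k,m_k)\}$, so that $(b,d) \in D_{\mathrm{Ver},q}(\mathcal{C}(\K^\kappa(\Xi)))$. The key new ingredient is Lemma~\ref{lem: matchingdistancecontinuity} (announced in the introduction as the main ingredient), which asserts that a small Hausdorff perturbation of the point cloud moves each point of the verbose diagram by a small $l_\infty$ amount; combined with the Lipschitz hypothesis on $\kappa$ and the isometry theorem (Theorem~\ref{thm: isometry}) relating the matching distance $d_M$ to the interleaving distance $d_I$, this gives: for every $\varepsilon>0$ there is $\eta>0$ so that any simple marked point set $\Xi'$ whose ground set is within Hausdorff distance $\eta$ of $\pi(\Xi)$ (and with compatible marks) has a point of $D_{\mathrm{Ver},q}(\mathcal{C}(\K^\kappa(\Xi')))$ within $\varepsilon$ of $(b,d)$. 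Now take a relatively compact open neighborhood $U$ of $(b,d)$ in $\Delta'$ of $d_\infty$-radius $\varepsilon$; I want to show $\nu_q'(U) > 0$.

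To get the positivity, I place small disjoint balls $B(z_j,\eta/2)$ around the $z_j$'s inside a large window $\Lambda_n$, and consider the event $A_n$ that $\Phi_g$ puts exactly one point in each $B(z_j,\eta/2)$ with the appropriate mark near $m_j$ (using that $\Lambda_n$ can be translated, and that there are order $\vol{\Lambda_n}$ disjoint translates of this configuration pattern available); by the absolute-continuity-with-strictly-positive-density hypothesis relative to a homogeneous Poisson process, each such local event has probability bounded below by a positive constant $c>0$ independent of the translate, and these events for disjoint translates are "almost independent" in the Poisson reference measure, so $\mathbb{E}[\xi_{\mathrm{Ver},q,\Lambda_n}^\kappa(U)] \ge c'\cdot \vol{\Lambda_n}$ for some $c'>0$ by a first-moment/translation-averaging argument. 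Dividing by $\vol{\Lambda_n}$ and applying the mean-measure convergence in Theorem~\ref{thm: vagueconvverbose} (via Lemma~\ref{lem: vagueconvequiv}, with $U^\circ \ni (b,d)$) yields $\nu_q'(\overline U) \ge c' > 0$, hence $(b,d)\in\mathrm{supp}(\nu_q')$ after shrinking $U$. The main obstacle I anticipate is the perturbation/counting step: one must ensure that the verbose-diagram point near $(b,d)$ produced by the local configuration genuinely contributes to the \emph{global} verbose diagram of $\K^\kappa(\Phi_\Lambda)$ — that is, that the extra points of $\Phi_\Lambda$ outside the chosen balls cannot annihilate or displace this feature beyond $\varepsilon$. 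This is exactly where the $l_\infty$-Lipschitz control from Lemma~\ref{lem: matchingdistancecontinuity} and regularity axiom (K3') must be used carefully, mirroring the "locality" argument of \cite{hiraokaetal2018} but now at the level of filtered chain complexes rather than homology; handling the infinity line $[0,\infty)\times\{\infty\}$ inside $\Delta'$ (where $d_\infty$ degenerates) requires the same separate treatment as in the proof of Theorem~\ref{thm: vagueconvverbose}.
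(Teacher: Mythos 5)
Your overall strategy (easy inclusion via vague convergence, hard inclusion via realizability plus stability plus a positive-probability local configuration plus translation averaging) is the same as the paper's, and your argument for $\mathrm{supp}(\nu_q')\subseteq\overline{R_q'(\kappa)}$ is essentially the paper's. However, the hard inclusion as you sketch it has a genuine gap, and it is exactly the step you flag as ``the main obstacle'': nothing in your proposal controls the effect of the \emph{other} points of the process on the local feature. Lemma~\ref{lem: matchingdistancecontinuity} cannot do this job, since it only compares point sets of \emph{equal cardinality} at small Hausdorff distance; adjoining the remaining points of $\Phi_{\Lambda_n}$ is not such a perturbation, and with extra points allowed inside or near your balls the configuration need not produce any diagram point near $(b,d)$, so the bound $\mathbb{E}[\xi_{\mathrm{Ver},q,\Lambda_n}^\kappa(U)]\geq c'\vol{\Lambda_n}$ does not follow (your appeal to ``almost independence'' under the Poisson reference measure is also unjustified for a general stationary $\Phi$ that is merely absolutely continuous with respect to Poisson, and is in fact unnecessary). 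The paper closes this gap differently: the local event requires the window to be \emph{otherwise empty} (exactly one point in each small ball and no other points of $\Phi_{\Lambda_n}$), whose positive probability follows from the Radon--Nikodym hypothesis and the Janossy-density computation (Remark~\ref{rmk: PPPdistribution}); and a new \emph{marker} lemma (Lemma~\ref{lem: marker}) shows, using monotonicity (K1), the regularity axiom (K3), and a truncation identity for verbose diagrams of filtrations cut off at level $L$, that points of the ambient process \emph{outside} the window cannot destroy the feature, because all simplices joining the local configuration to distant points have $\kappa$-value exceeding $L$ while the feature has death at most $L$. Once the marker property holds, stationarity and linearity of expectation (packaged as Lemma~\ref{lem: subsetofsupport}, the verbose analogue of the corresponding result in \cite{hiraokaetal2018}) give the positivity of $\nu_q'$ on the neighborhood; no independence argument is needed. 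Supplying this chain-level locality/marker argument is the substantive new content of the proof, and it is absent from your proposal.

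Two further points. First, your event asks for points ``with the appropriate mark near $m_j$,'' but the hypotheses give no control whatsoever on the mark distribution, so this event cannot be bounded below; the paper avoids the issue by first reducing to the unmarked setting via Lemma~\ref{lem: nomark}: the Lipschitz condition $|\kappa(\sigma)-\kappa(\sigma')|\leq c_\kappa d_H(\pi(\sigma),\pi(\sigma'))$ forces $\kappa$ to depend only on $\pi(\sigma)$, so marks are irrelevant and one may work with the ground process and Theorem~\ref{thm: measuresupport}'s hypotheses directly. Second, realizable points with $d=\infty$ (which occur only for $q=0$, since $\kappa$ is finite-valued) are not covered by the marker machinery, which applies only when $d<\infty$; the paper treats them by a separate density argument, approximating $(b,\infty)$ inside any neighborhood by realizable points $(b,\kappa(\{x,y\}))$ with arbitrarily large finite death using (K3) and invoking the finite-death case. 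Your remark that the infinity line ``requires the same separate treatment as in the proof of Theorem~\ref{thm: vagueconvverbose}'' does not address this; the subtlety there (translating measures on $\Delta'$) is of a different nature.
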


\begin{lemma} \label{lem: nomark}
    \begin{enumerate}[label=(\roman*)]
        \item \label{lemitem: concisesupport}
        The assumption of Theorem~\ref{thm: measuresupportverbose} implies the conclusion of Theorem~\ref{thm: measuresupport}.
        \item \label{lemitem: verbosesupport}
        If the assumption of Theorem~\ref{thm: measuresupport} implies the conclusion of Theorem~\ref{thm: measuresupportverbose}, then Theorem~\ref{thm: measuresupportverbose} holds.
        
    \end{enumerate}
\end{lemma}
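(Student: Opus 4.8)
I would base everything on two elementary observations. (a) The persistence diagram is exactly the off-diagonal part of the verbose diagram, $\xi_q(\K) = \xi_{\mathrm{Ver},q}(\mathcal{C}(\K))|_{\Delta}$, and $\Delta$ is open in $\Delta'$; hence vague convergence restricts to $\Delta$, so by uniqueness of vague limits and the marked form of Theorem~\ref{thm: vagueconv} (Remark~\ref{rmk: shirai}) one has $\nu_q = \nu_q'|_\Delta$ and, on the realizability side, $R_q(\kappa) = R_q'(\kappa)\cap\Delta$. (b) An unmarked point process together with an unmarked filtration function is the case $\M=\{\ast\}$ of the marked framework, with $\Phi_g=\Phi$ and $\pi=\mathrm{id}_{\R^N}$; and, conversely, the hypotheses of Theorem~\ref{thm: measuresupportverbose} are precisely the hypotheses of Theorem~\ref{thm: measuresupport} read for the ground process $\Phi_g$ in place of $\Phi$ and for the Hausdorff distance composed with $\pi$ in place of the Hausdorff distance.

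\textbf{Part (i).} This is the marked analogue of \cite[Theorem~1.9]{hiraokaetal2018}, and I would obtain it by running that proof in the marked setting, the only changes being the bookkeeping of (b). The inclusion $\mathrm{supp}(\nu_q)\subseteq\overline{R_q(\kappa)}$ is soft and needs nothing beyond the (marked) vague convergence: if $p\notin\overline{R_q(\kappa)}$, choose a relatively compact open $U\ni p$ disjoint from $R_q(\kappa)$; by the definition of realizability every realization of $\xi_{q,\Lambda}^\kappa$ assigns zero mass to $U$, so $\tfrac{1}{\vol{\Lambda_n}}\mathbb{E}[\xi_{q,\Lambda_n}^\kappa](U)=0$ and Lemma~\ref{lem: vagueconvequiv} forces $\nu_q(U)=0$, whence $p\notin\mathrm{supp}(\nu_q)$. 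For the reverse inclusion, fix $p\in R_q(\kappa)$ realized by a finite simple marked point set $\Xi$ and fix $\varepsilon>0$; using the regularity axiom (K3') pick a bounded ball $B\subset\R^N$ containing the spatial support of a recentred copy of $\Xi$ and large enough that the $\kappa$-values of all simplices supported in $B$ ignore points of the point cloud outside $B$; pack of order $\vol{\Lambda_n}$ pairwise disjoint translates of $B$ into $\Lambda_n$. The strictly-positive-density absolute continuity of $\Phi_g$ relative to a homogeneous Poisson process (for which the relevant event has positive probability, since a Poisson process reproduces any prescribed configuration with positive probability) shows that in each translate $\Phi$ reproduces, with probability bounded below by some $\rho_0>0$, a configuration matching $\Xi$ within Hausdorff distance $\delta$; the $\R^N$-Lipschitz continuity of $\kappa$ together with bottleneck stability of persistence diagrams then forces this configuration to contribute a persistence-diagram point within $\varepsilon$ of $p$. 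As these local contributions are additive by the choice of $B$, $\liminf_n\tfrac{1}{\vol{\Lambda_n}}\mathbb{E}[\xi_{q,\Lambda_n}^\kappa(B(p,\varepsilon))]\ge\rho_0>0$, so Lemma~\ref{lem: vagueconvequiv} gives $\nu_q(\overline{B(p,\varepsilon)})>0$; letting $\varepsilon\to0$ and taking closures yields $\overline{R_q(\kappa)}\subseteq\mathrm{supp}(\nu_q)$.

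\textbf{Part (ii) and the main obstacle.} By observation (b), the hypothesised implication ``assumption of Theorem~\ref{thm: measuresupport} $\Rightarrow$ conclusion of Theorem~\ref{thm: measuresupportverbose}'', understood as an implication between the underlying conditions, applies to the data of Theorem~\ref{thm: measuresupportverbose}, because the ground-process absolute-continuity condition and the $\R^N$-Lipschitz condition on $\kappa$ are exactly ``the assumption of Theorem~\ref{thm: measuresupport}'' for that (marked) data; hence it immediately delivers $\mathrm{supp}(\nu_q')=\overline{R_q'(\kappa)}$, i.e. Theorem~\ref{thm: measuresupportverbose}. Concretely, this is how the lemma is meant to be used: the proof of ``assumption of Theorem~\ref{thm: measuresupport} $\Rightarrow$ conclusion of Theorem~\ref{thm: measuresupportverbose}'' proceeds by ``shifting the diagram'' to turn the verbose diagram into an ordinary persistence diagram and uses only Theorem~\ref{thm: measuresupport}, the perturbation stability of verbose diagrams (Lemma~\ref{lem: matchingdistancecontinuity}), and soft facts about vague convergence; every non-soft ingredient of that argument has a marked counterpart --- the marked version of Theorem~\ref{thm: measuresupport} being part (i) of the present lemma --- so the same argument establishes the marked statement of Theorem~\ref{thm: measuresupportverbose}. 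I expect the substantive difficulty to lie entirely in the reverse inclusion of part (i): making the packing estimate rigorous, and in particular guaranteeing, via (K3') and the choice of $B$, that the contributions of the disjoint translates are genuinely independent and additive and that the absolute-continuity hypothesis supplies the lower bound $\rho_0$ uniformly over translates; part (ii) is then a matter of verifying that all non-soft inputs of the concise-case argument survive the passage to marks.
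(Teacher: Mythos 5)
Your proposal misses the single observation on which the paper's proof of this lemma rests, and as a result both parts have genuine gaps. The paper's argument is a short \emph{reduction to the unmarked case}: the Lipschitz hypothesis in Theorem~\ref{thm: measuresupportverbose} is with respect to $d_H(\pi(\sigma),\pi(\sigma'))$, so taking $\pi(\sigma)=\pi(\sigma')$ shows $\kappa(\sigma)$ depends only on $\pi(\sigma)$; hence $\kappa$ descends to an unmarked filtration function $\widehat{\kappa}$ with $\K^{\kappa}(\Xi)\cong\K^{\widehat{\kappa}}(\pi(\Xi))$ for every simple marked point set $\Xi$, which gives $\nu_q(\kappa,\Phi)=\nu_q(\widehat{\kappa},\Phi_g)$, $R_q(\kappa)=R_q(\widehat{\kappa})$ (and the primed analogues). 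Item~\ref{lemitem: concisesupport} is then just Theorem~\ref{thm: measuresupport} applied to $(\widehat{\kappa},\Phi_g)$, and Item~\ref{lemitem: verbosesupport} is the hypothesized (unmarked) implication applied to $(\widehat{\kappa},\Phi_g)$. You note in observation (b) that the hypotheses of the two theorems ``match through $\pi$,'' but you never draw the crucial consequence that $\kappa$ is mark-independent, and this is what breaks your Part (ii): the implication assumed there is a statement about unmarked point processes and unmarked filtration functions, so it cannot ``immediately deliver'' the marked conclusion --- you must first identify $\nu_q'(\kappa,\Phi)$ with $\nu_q'(\widehat{\kappa},\Phi_g)$ and $R_q'(\kappa)$ with $R_q'(\widehat{\kappa})$, which is exactly the missing reduction.

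For Part (i) you instead propose to re-derive a marked analogue of \cite[Theorem~1.9]{hiraokaetal2018} from scratch. That theorem's marked version is not in the cited literature (Remark~\ref{rmk: shirai} extends Theorems~\ref{thm: vagueconv} and~\ref{thm: persbettiSLLN}, not Theorem~\ref{thm: measuresupport}), so this route requires a full proof, and your sketch leaves the hard steps open: the claim that ``local contributions are additive by the choice of $B$'' is precisely the nontrivial \emph{marker} property (one must show that adding the local configuration to \emph{any} ambient configuration creates an extra diagram point; cf.\ Lemma~\ref{lem: marker}, which needs (K3) and $d<\infty$), the case of realizable points with $d=\infty$ (e.g.\ $(0,\infty)$ for $q=0$) is not covered by any packing argument and needs a separate treatment, and the absolute-continuity hypothesis controls only the ground process, so the probability lower bound for reproducing a \emph{marked} configuration is not available unless one first uses mark-independence of $\kappa$ --- again the unstated reduction. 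Even granting all this could be repaired, the approach replaces a two-paragraph soft argument by a re-proof of a substantial theorem; as written, it does not establish the lemma.
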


\begin{proof}
    The proofs of the two statements are similar, and thus we only prove Item \ref{lemitem: concisesupport}.
    Suppose that the assumption of Theorem~\ref{thm: measuresupportverbose} holds.
    Then, by the Lipschitz continuity assumption on $\kappa$, for any \(\sigma \in \mathscr{F}(\R^N \times \M)\), the value \(\kappa(\sigma)\) is determined solely by its unmarked part \(\pi(\sigma)\), i.e., if \(\sigma' \in \mathscr{F}(\R^N \times \M)\) with \(\pi(\sigma) = \pi(\sigma')\), then \(\kappa(\sigma) = \kappa(\sigma')\).
        Accordingly, we define \(\widehat{\kappa}: \mathscr{F}(\R^N) \to [0,\infty]\) by
    \(
        \widehat{\kappa}(\sigma) := \kappa(\widetilde{\sigma}),
    \)
    where for each unmarked simplex \(\sigma \in \mathscr{F}(\R^N)\), the symbol \(\widetilde{\sigma}\) denotes \emph{any} marked simplex in \(\mathscr{F}(\R^N \times \M)\) with \(\pi(\widetilde{\sigma}) = \sigma\).
    It is straightforward to see that \(\widehat{\kappa}\) is a filtration function for unmarked point sets.
    We claim that
    \[
        \nu_q(\kappa,\Phi) = \nu_q(\widehat{\kappa},\Phi_g) 
        \quad \text{and} \quad 
        R_q(\kappa) = R_q(\widehat{\kappa}).
    \]
    This is because, for any
    simple marked point set \(\Xi\),
    the simplicial filtration $\K^{\kappa}(\Xi)$ is naturally isomorphic to $\K^{\widehat{\kappa}}(\pi(\Xi))$
    via the natural isomorphism \(\alpha:\K^{\kappa}(\Xi)\Rightarrow \K^{\widehat{\kappa}}(\pi(\Xi))\) given by
    \(\alpha_t: K^{\kappa}(\Xi,t) \to K^{\widehat{\kappa}}(\pi(\Xi),t)\), \(\sigma \mapsto \pi(\sigma)\) for all $t\geq 0$.
    
    Therefore, 
    by Theorem~\ref{thm: measuresupport},
    we have
    \[
        \mathrm{supp}(\nu_q(\kappa, \Phi)) = \mathrm{supp}(\nu_q(\widehat{\kappa},\Phi_g)) = \overline{R_q(\widehat{\kappa})} = \overline{R_q(\kappa)},
    \]as desired.
\end{proof}

\begin{example}
    Recall from Definition~\ref{def:VR and Cech} the filtration function $\kappa_C$. We find the set of realizable points $R_q'(\kappa_C)$.
    For \(q = 0\), it is easy to find that each of 
       \(
        R_0(\kappa_C)\) and \( 
        R_0'(\kappa_C)\) 
        is identical to  $\{0\} \times (0, \infty].
    $
    Also, for \(q \geq 1\), it is not difficult to verify that
    \begin{align*}
        R_q'(\kappa_C) &= R_q(\kappa_C) \cup \{(t, t): 0 < t < \infty\}.
    \end{align*}
    Hence, by Equation~\eqref{eq: cechrealizable}, for a stationary point process \(\Phi\) on \(\R^N\) that satisfies the assumption of Theorem~\ref{thm: measuresupportverbose}, we have
    \[
        \mathrm{supp}(\nu_q'(\kappa_C, \Phi)) = \overline{R_q'(\kappa_C)} = 
            \begin{cases}
        \{0\} \times [0, \infty], &\quad \text{if } q = 0, \\
        \Delta', &\quad \text{if } q = 1, 2, \dots, N-1, \\
        \{(t, t): 0 \leq t < \infty\}, &\quad \text{if } q \geq N.
    \end{cases}
    \]
\end{example}
From Equation~\eqref{eq: persbetti}, recall the definition of the persistent Betti number.  
Note that the right-hand side of that equation remains well-defined even when \(r > s\).  
This observation motivates an extension of the definition: 

\begin{definition} \label{def: extendedPBN}
    For any \(r, s \in [0, \infty)\) and a filtration \(\K\), the \emph{persistent Betti number} \(\beta_q^{r, s}(\K)\) is defined to be the right-hand side of Equation~\eqref{eq: persbetti}.
\end{definition}

\begin{convention}
    Given a measurable space \((Y, \mathcal{B})\) and a subset \(X\subset Y\), let \(\mu\) be a measure on the \(\sigma\)-algebra on \(X\) induced by \(\mathcal{B}\). Then, for any \(B \in \mathcal{B}\), we let \(
        \mu(B) := \mu(X \cap B).
    \)
\end{convention}

For the extended persistent Betti numbers, a representation analogous to Equation~\eqref{eq: persbettimeasure} remains valid, provided that the diagram is interpreted in its verbose form.

\begin{proposition} \label{prop: persbettimeasure}
    For \(r, s \in [0, \infty)\) and a filtration \(\K\) of a finite simplicial complex, we have
    \[
        \beta_q^{r, s}(\K) = \xi_{\mathrm{Ver}, q}(\mathcal{C}(\K))([0, r] \times (s, \infty]).
    \]
\end{proposition}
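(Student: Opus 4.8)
The plan is to unwind both sides of the claimed equality in terms of a singular value decomposition (SVD) of the boundary maps of the FCC $\mathcal{C}(\K)$, using the fact (from Section~\ref{subsec: verbose}) that the $\ell$-values of the SVD vectors are exactly the filtration birth-times. Fix a degree $q$ and write $\partial_{q+1}\colon C_{q+1}\to \mathrm{Ker}\,\partial_q$ with rank $\varrho$, and let $((y_1,\dots,y_n),(x_1,\dots,x_m))$ be an SVD, so $(x_1,\dots,x_\varrho)$ is an orthogonal basis of $\mathcal{B}_q(K)$, $(x_1,\dots,x_m)$ an orthogonal basis of $\mathcal{Z}_q(K)=\mathrm{Ker}\,\partial_q$, and $\partial_{q+1}(y_i)=x_i$ for $i\le\varrho$. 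By definition, $D_{\mathrm{Ver},q}(\mathcal{C}(\K))$ consists of the points $(\ell(x_i),\ell(y_i))$ for $i=1,\dots,\varrho$ together with $(\ell(x_i),\infty)$ for $i=\varrho+1,\dots,m$. Hence
\[
    \xi_{\mathrm{Ver}, q}(\mathcal{C}(\K))([0,r]\times(s,\infty])
    = \#\{\,i\le \varrho : \ell(x_i)\le r,\ \ell(y_i)>s\,\}
      + \#\{\,\varrho< i\le m : \ell(x_i)\le r\,\}.
\]

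The key step is to identify this count with $\dim \mathcal{Z}_q(K_r)/\big(\mathcal{Z}_q(K_r)\cap\mathcal{B}_q(K_s)\big)$. First I would observe that $\mathcal{Z}_q(K_r)$, which by Equation~\eqref{eq: filtfuncandbirthtime} and Remark~\ref{rmk:meaning of ell} equals $\ell^{-1}([-\infty,r])\cap\mathcal{Z}_q(K)=\mathcal{Z}_q(K)^r$, has as an orthogonal basis exactly those $x_i$ ($1\le i\le m$) with $\ell(x_i)\le r$; this is the standard fact that restricting an orthogonal basis of a non-Archimedean normed space to vectors of norm $\le r$ yields a basis of the sublevel subspace (a consequence of orthogonality, as in \cite{usherzhang2016}). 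Similarly $\mathcal{B}_q(K_s)=\mathcal{B}_q(K)^s$ has orthogonal basis $\{x_i : i\le\varrho,\ \ell(x_i)\le s\}$, using that $x_i=\partial_{q+1}(y_i)$ has $\ell(x_i)\le\ell(y_i)$ but more precisely that $x_i\in\mathcal{B}_q(K_s)$ iff it is a boundary born by time $s$, which — because the $x_i$ form an orthogonal basis of $\mathcal B_q(K)$ — happens iff $\ell(x_i)\le s$. Then $\mathcal{Z}_q(K_r)\cap\mathcal{B}_q(K_s)$ has orthogonal basis $\{x_i : i\le\varrho,\ \ell(x_i)\le\min(r,s)\}$, and the quotient has dimension
\[
    \#\{i\le m : \ell(x_i)\le r\} - \#\{i\le\varrho : \ell(x_i)\le\min(r,s)\}.
\]
A short case split on whether $r\le s$ or $r>s$, together with the decomposition of $\{i\le m : \ell(x_i)\le r\}$ into $i\le\varrho$ and $\varrho<i\le m$, shows this equals the count displayed above; in particular when $r>s$ the middle term $\#\{i\le\varrho : \ell(x_i)\le r\text{ and }\ell(x_i)>s\}$ contributes the "below-diagonal" points $(\ell(x_i),\ell(y_i))$ with $\ell(y_i)=\ell(x_i)\in(s,r]$, which is exactly the new phenomenon Definition~\ref{def: extendedPBN} is meant to capture.

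The main obstacle I anticipate is justifying rigorously that the sublevel subspaces $\mathcal{Z}_q(K_r)=\mathcal{Z}_q(K)^r$ and $\mathcal{B}_q(K_s)=\mathcal{B}_q(K)^s$ genuinely coincide with the subspaces spanned by the low-$\ell$ SVD vectors, and that intersections of such subspaces behave as expected — this is where non-Archimedean orthogonality is essential and where one must invoke the structural lemmas of \cite{usherzhang2016} carefully (an orthogonal basis of $C$ need not restrict to an orthogonal basis of an arbitrary subspace, but it does for sublevel subspaces and for the cycle/boundary subspaces appearing in an SVD). Once that is in place, the remaining bookkeeping is the elementary counting argument above, and the identification $\mathcal{Z}_q(K_r)=\mathcal{Z}_q(K)^r$ (respectively for boundaries) follows from Equation~\eqref{eq: filtfuncandbirthtime}: a $q$-chain lies in $K_r$ iff all its simplices have birth-time $\le r$ iff its $\ell_\K$-value is $\le r$.
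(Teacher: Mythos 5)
Your overall architecture (work with a singular value decomposition of $\partial_{q+1}$, express $\mathcal{Z}_q(K_r)$ and $\mathcal{Z}_q(K_r)\cap\mathcal{B}_q(K_s)$ in terms of the $x_i$, and compute $\beta_q^{r,s}$ as a difference of dimensions) is the same as the paper's, which routes these identifications through Proposition~\ref{prop: cycleboundary}. Your treatment of the cycle group is fine: $\mathcal{Z}_q(K_r)=\mathcal{Z}_q(K)\cap\ell_{\K}^{-1}([-\infty,r])$ does hold, since being a cycle of $K_r$ is an intrinsic condition on the chain, and orthogonality then gives the basis $\{x_i:\ell_{\K}(x_i)\le r\}$.

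The gap is in your boundary-group step. You assert $\mathcal{B}_q(K_s)=\mathcal{B}_q(K)^s$ with orthogonal basis $\{x_i : i\le\varrho,\ \ell_{\K}(x_i)\le s\}$, i.e.\ ``$x_i\in\mathcal{B}_q(K_s)$ iff $\ell_{\K}(x_i)\le s$.'' This is false: membership in $\mathcal{B}_q(K_s)$ requires a bounding $(q+1)$-chain that is \emph{already present in $K_s$}, not merely that the cycle $x_i$ is present in $K_s$ and bounds in the final complex $K$. In the filtration of Figure~\ref{fig: filtration-diagram}, the loop with diagram point $(2,6)$ has $\ell_{\K}(x_i)=2\le 4$ yet is not a boundary in $K_4$; with your basis one would get $\dim\mathcal{B}_1(K_4)=3$ instead of $1$, and your resulting formula $\#\{i\le m:\ell(x_i)\le r\}-\#\{i\le\varrho:\ell(x_i)\le\min(r,s)\}$ yields $\beta_1^{4,4}=0$ rather than the correct value $2$, so the error is not repaired by the later case split — it kills exactly the off-diagonal (persistent) points. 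The correct statement, which is the content of Proposition~\ref{prop: cycleboundary}(ii), is that $\mathcal{B}_q(K_s)$ has basis $\{x_i : i\le\varrho,\ \ell_{\K}(y_i)\le s\}$, with the criterion on the \emph{death} value $\ell_{\K}(y_i)$; proving it requires the extra argument you are missing: given $w\in\mathcal{B}_q(K_s)$, choose $v\in C_{q+1}(K_s)$ with $\partial_{q+1}v=w$, expand $v$ in the orthogonal basis $(y_1,\dots,y_n)$, use orthogonality and $\ell_{\K}(v)\le s$ to see that only $y_i$ with $\ell_{\K}(y_i)\le s$ occur, and apply $\partial_{q+1}$. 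With that correction (and the corresponding intersection basis $\{x_i:\ell_{\K}(x_i)\le r,\ \ell_{\K}(y_i)\le s\}$), your counting does reduce to the paper's proof.
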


See Figure~\ref{subfig: PBNextended} for an illustration of the case \(r > s\).

\begin{remark}[Interpretation of the extended persistent Betti numbers] \label{rmk: PBNextended}
    While the phrase ``the number of \(q\)-dimensional holes that persist from time \(t = r\) to time \(t = s\)" does not make sense for $r>s$, the phrase ``the number of \(q\)-dimensional holes born no later than \(t=r\) and dying after \(t=s\)"---makes sense for all \(r, s \in [0, \infty)\).
    Proposition~\ref{prop: persbettimeasure} enables this interpretation for the extended persistent Betti numbers, provided that we even count holes that are born and die simultaneously.
    In particular, when \(r > s\), \(\beta_q^{r, s}\) equals the number of \(q\)-dimensional holes whose lifespan \([b, d]\) intersects the interval \((s, r]\).
\end{remark}

We extend Theorem~\ref{thm: persbettiSLLN} and its marked analogue (Remark~\ref{rmk: shirai}) to the setting of extended persistent Betti numbers.

\begin{theorem}[Strong law of large numbers for the extended persistent Betti number] \label{thm: persbettiSLLNextended}
    Assume that \(\Phi\) is a stationary marked point process on \(\R^N\) with marks in \(\M\),
    and let \(\kappa\) be a filtration function.  
    Then, for any \(r, s \in [0, \infty)\) and \(q \geq 0\), there exists a constant \(\widehat{\beta}^{\kappa, r,s}_q\) such that, for any convex averaging sequence \(\mathcal{L} = \{\Lambda_n\}_n\) in \(\R^N\),
    \[
        \frac{\mathbb{E}[\beta_{q, \Lambda_n}^{\kappa, r, s}]}{n^N} \to \widehat{\beta}^{\kappa, r,s}_q \quad \text{as } n \to \infty.
    \]
    Here, $\beta_{q, \Lambda_n}^{\kappa, r, s}$ stands for $\beta_q^{r, s}(\K^\kappa(\Phi_{\Lambda_n}))$.\footnote{We remark that $\beta_{q, \Lambda_n}^{\kappa, r, s}$ is a random variable, as in Theorems~\ref{thm: persbettiSLLN} and~\ref{thm: persbettiCLT}. We omit the proof of this remark (cf. Remark~\ref{rmk:point processes}).}
    In addition, if \(\Phi\) is ergodic, then
    \[
        \frac{\beta_{q, \Lambda_n}^{\kappa, r, s}}{n^N} \to \widehat{\beta}^{\kappa, r,s}_q \quad \text{almost surely as } n \to \infty.
    \]
\end{theorem}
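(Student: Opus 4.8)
The plan is to reduce the statement to the already-established marked strong law for classical persistent Betti numbers by \emph{shifting the diagram along the $y$-axis}. When $r\le s$ there is nothing to do: Definition~\ref{def: extendedPBN} then agrees with the classical persistent Betti number, so the assertion is precisely Theorem~\ref{thm: persbettiSLLN} together with its marked extension recalled in Remark~\ref{rmk: shirai}. So I would fix $q\ge0$ and $r>s\ge0$, set $c:=r-s>0$, and introduce the modified filtration function $\kappa'\colon\mathscr{F}(\R^N\times\M)\to[0,\infty]$ given by $\kappa'(\sigma)=\kappa(\sigma)$ if $|\sigma|\le q+1$ and $\kappa'(\sigma)=\kappa(\sigma)+c$ if $|\sigma|\ge q+2$ (with the convention $\infty+c=\infty$). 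The first task is to confirm that $\kappa'$ is again a filtration function: (K2') is immediate, (K3') holds with the same $\rho$ since $\rho$ is increasing and $\kappa'\ge\kappa$ on every edge, and monotonicity (K1) holds because adding the nonnegative constant $c$ is order preserving, so $\kappa'(\sigma)\le\kappa'(\tau)$ whenever $\sigma\subset\tau$, as one checks in the three cases according to how $|\sigma|$ and $|\tau|$ sit relative to the threshold $q+2$.

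The central step is the deterministic identity
\[
\beta_q^{r,s}\bigl(\K^{\kappa}(X)\bigr)=\beta_q^{r,\,s+c}\bigl(\K^{\kappa'}(X)\bigr)=\beta_q^{r,r}\bigl(\K^{\kappa'}(X)\bigr),
\]
valid for every finite simple marked point set $X$, where $\beta_q^{\cdot,\cdot}$ is the extended persistent Betti number of Definition~\ref{def: extendedPBN}. I would establish it straight from that definition: since $\kappa'$ agrees with $\kappa$ on all simplices of dimension $\le q$, the complexes $K^{\kappa'}(X,t)$ and $K^{\kappa}(X,t)$ have the same $q$-skeleton for every $t$, hence $\mathcal{Z}_q\bigl(K^{\kappa'}(X,r)\bigr)=\mathcal{Z}_q\bigl(K^{\kappa}(X,r)\bigr)$; and a $(q+1)$-simplex $\tau$ satisfies $\kappa'(\tau)=\kappa(\tau)+c\le s+c=r$ exactly when $\kappa(\tau)\le s$, so $C_{q+1}\bigl(K^{\kappa'}(X,r)\bigr)=C_{q+1}\bigl(K^{\kappa}(X,s)\bigr)$ and therefore $\mathcal{B}_q\bigl(K^{\kappa'}(X,r)\bigr)=\mathcal{B}_q\bigl(K^{\kappa}(X,s)\bigr)$. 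Substituting these two equalities into the defining quotient $\mathcal{Z}_q/(\mathcal{Z}_q\cap\mathcal{B}_q)$ yields the identity. This is the precise sense in which the diagram is shifted: in degree $q$ the verbose diagram $D_{\mathrm{Ver}, q}(\mathcal{C}(\K^{\kappa'}(X)))$ is obtained from $D_{\mathrm{Ver}, q}(\mathcal{C}(\K^{\kappa}(X)))$ by the map $(b,d)\mapsto(b,d+c)$ on the finite-death points and the identity on the points with death $\infty$, and since that map restricts to a bijection from the part of the diagram inside $[0,r]\times(s,\infty]$ onto the part inside $[0,r]\times(s+c,\infty]$, Proposition~\ref{prop: persbettimeasure} --- which rests on Proposition~\ref{prop: cycleboundary} --- delivers the same conclusion.

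The last step is to invoke Theorem~\ref{thm: persbettiSLLN} in its marked form (Remark~\ref{rmk: shirai}) for the filtration function $\kappa'$ at the admissible pair $(r,r)$ (note $0\le r<\infty$): this produces a constant $\widehat\beta_q^{\kappa',r,r}$ such that $\mathbb{E}\bigl[\beta_q^{r,r}(\K^{\kappa'}(\Phi_{\Lambda_n}))\bigr]/\vol{\Lambda_n}\to\widehat\beta_q^{\kappa',r,r}$ along every convex averaging sequence, with the corresponding almost-sure statement when $\Phi$ is ergodic. Stationarity, ergodicity and the finite-moment hypothesis on the ground process all concern $\Phi$ alone and are untouched by passing from $\kappa$ to $\kappa'$, while the central step gives $\beta_{q,\Lambda_n}^{\kappa,r,s}=\beta_q^{r,r}(\K^{\kappa'}(\Phi_{\Lambda_n}))$ as an equality of random variables for each $n$; hence setting $\widehat\beta_q^{\kappa,r,s}:=\widehat\beta_q^{\kappa',r,r}$ transfers both convergences verbatim, proving the theorem. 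I expect the part requiring the most care to be the central step --- making sure that delaying exactly the simplices of dimension $\ge q+1$ moves only the death coordinates in degree $q$ and transforms the cycle and boundary groups precisely as claimed, which is where Proposition~\ref{prop: cycleboundary} enters --- together with the small but genuine check that $\kappa'$ still obeys (K1); it is precisely the dimension-awareness of a filtration function, through simplex cardinality, that makes such a one-sided shift expressible at all.
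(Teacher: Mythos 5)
Your proposal is correct and follows the paper's strategy: the same reduction to the case $r>s$, the same shifted filtration function (your $\kappa'$ is exactly the paper's $t$-shift $\kappa^{q,t}$ with $t=r-s$), and the same final appeal to the marked strong law of Theorem~\ref{thm: persbettiSLLN} via Remark~\ref{rmk: shirai} at the pair $(r,r)$, with $\widehat{\beta}^{\kappa,r,s}_q:=\widehat{\beta}^{\kappa^{q,t},r,r}_q$. The only real difference is how the deterministic identity $\beta_q^{r,s}(\K^\kappa(X))=\beta_q^{r,r}(\K^{\kappa^{q,t}}(X))$ is obtained: the paper routes it through the verbose-diagram machinery, combining Proposition~\ref{prop: persbettimeasure} with the translation statement of Lemma~\ref{lem: shift} (Equation~\eqref{eq: shiftofverbosediagram}), whereas you prove it directly at the chain level, observing that $\mathcal{Z}_q(K^{\kappa'}(X,r))=\mathcal{Z}_q(K^{\kappa}(X,r))$ because the $q$-skeletons agree, and $\mathcal{B}_q(K^{\kappa'}(X,r))=\mathcal{B}_q(K^{\kappa}(X,s))$ because the $(q+1)$-chain groups at the shifted threshold coincide, then plugging into Equation~\eqref{eq: persbetti}. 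Your argument is slightly more elementary (it needs neither Proposition~\ref{prop: persbettimeasure} nor the singular-value-decomposition analysis behind Lemma~\ref{lem: shift}), while the paper's version reuses machinery it has already built and will reuse again for Theorem~\ref{thm: persbettiCLTextended}; your closing paragraph, which re-derives the identity via the diagram translation and Proposition~\ref{prop: persbettimeasure}, is a correct alternative justification rather than a needed step. Your verification that $\kappa'$ satisfies (K1), (K2'), (K3') also matches the paper's check for $\kappa^{k,t}$, so there is no gap.
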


The following theorem extends Theorem~\ref{thm: persbettiCLT} to the setting of extended persistent Betti numbers.

\begin{theorem}[Central limit theorem for the extended persistent Betti number] \label{thm: persbettiCLTextended}
    Let \(\Phi\) be a homogeneous Poisson point process on \(\R^N\) with unit intensity, and let \(\kappa\) be a filtration function.
    For \(n \geq 1\), let \(\Lambda_n = [-n/2, n/2)^N\).
    Then, for any \(r, s \in [0, \infty)\) and \(q \geq 0\), there exists a constant \(\sigma_{r, s}^2 = \sigma_{r, s}^2(\kappa, q)\) such that
    \begin{equation} \label{eq: persbettiCLTextended}
        \frac{\beta_{q, \Lambda_n}^{\kappa, r, s} - \mathbb{E}[\beta_{q, \Lambda_n}^{\kappa, r, s}]}{n^{N/2}} \xrightarrow{d} \mathcal{N}(0, \sigma_{r, s}^2) \quad \text{as } n \to \infty.
    \end{equation}
\end{theorem}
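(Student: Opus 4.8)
The plan is to reduce the extended persistent Betti number to an ordinary persistent Betti number of a suitably modified filtration, so that the already-established central limit theorem for ordinary persistent Betti numbers (Theorem~\ref{thm: persbettiCLT}) applies. The obstruction to applying Theorem~\ref{thm: persbettiCLT} directly is only the case $r>s$, since for $r\le s$ the statement is literally Theorem~\ref{thm: persbettiCLT}; so I will focus on $r>s$. The key idea is the ``shifting the diagram'' technique announced in the introduction, realized here through Proposition~\ref{prop: persbettimeasure}: by Proposition~\ref{prop: persbettimeasure} (the extended fundamental lemma), $\beta_{q,\Lambda_n}^{\kappa,r,s}$ counts the points of the \emph{verbose} diagram $\xi_{\mathrm{Ver},q,\Lambda_n}^\kappa$ falling in the region $[0,r]\times(s,\infty]$, including points on the diagonal when $s<r$. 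I want to express this count as an ordinary persistent Betti number of a modified filtration in which all diagonal points have been pushed off the diagonal by a fixed vertical shift.

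First I would fix a shift parameter, say any $c$ with $c> r$ (so in particular $c>r-s>0$), and construct from $\kappa$ a modified filtration function $\kappa_c$ on $\mathscr{F}(\R^N\times\M)$ — concretely, by adjoining one extra ``marking'' coordinate recording the dimension of a simplex and adding $c$ to the $(q{+}1)$-simplices' values, or more simply by using the explicit chain-level description in Section~\ref{subsec: verbose}: each diagonal point $(\ell_{\K}(x),\ell_{\K}(x))$ of the verbose diagram comes from a pair $\partial_{q+1}(y)=x$ with $\ell_{\K}(x)=\ell_{\K}(y)$, and delaying the birth time of every $(q{+}1)$-simplex by $c$ turns this into an off-diagonal point $(\ell_{\K}(x),\ell_{\K}(x)+c)$ while leaving all off-diagonal points $(b,d)$ of the original verbose diagram sent to $(b,d+c)$ (those with $d<\infty$) or unchanged (those with $d=\infty$). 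The upshot I would prove as a lemma is: for $q$ fixed and $c$ large, the ordinary persistence diagram $D_q(\K^{\kappa_c}(\cdot))$ is the image of $D_{\mathrm{Ver},q}(\mathcal{C}(\K^{\kappa}(\cdot)))$ under the map $(b,d)\mapsto(b,d+c)$ on finite points and the identity on the infinity line; hence $\xi_{q,\Lambda_n}^{\kappa_c}$ is a deterministic pushforward of $\xi_{\mathrm{Ver},q,\Lambda_n}^{\kappa}$. Moreover I need $\kappa_c$ to still be a filtration function in the sense of Definition~\ref{def:filtration function} (monotonicity, translation-invariance, and the regularity bound (K3')) — monotonicity and translation-invariance are immediate, and (K3') holds with the regularity function $\rho$ of $\kappa$ unchanged for pairs (which are always $1$-simplices, hence untouched unless $q=0$; for $q=0$ one uses $\rho$ and a harmless additive constant).

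Next I would translate the region count. Under the shift $(b,d)\mapsto(b,d+c)$, a point lies in $[0,r]\times(s,\infty]$ in the verbose diagram iff its image lies in $[0,r]\times(s+c,\infty]$; the only subtlety is the infinity-line points, which are fixed and already satisfy $d=\infty>s+c$, consistent with $d=\infty>s$. Since $c>r>s$ we have $r\le s+c$, so $[0,r]\times(s+c,\infty]$ is a ``legitimate'' region in the sense that it is read off by the \emph{ordinary} fundamental lemma: combining Proposition~\ref{prop: persbettimeasure} with Equation~\eqref{eq: persbettimeasure},
\[
\beta_{q,\Lambda_n}^{\kappa,r,s}
=\xi_{\mathrm{Ver},q,\Lambda_n}^{\kappa}\big([0,r]\times(s,\infty]\big)
=\xi_{q,\Lambda_n}^{\kappa_c}\big([0,r]\times(s+c,\infty]\big)
=\beta_{q,\Lambda_n}^{\kappa_c,\,r,\,s+c},
\]
and now $r\le s+c$, so the right-hand side is an ordinary persistent Betti number to which Theorem~\ref{thm: persbettiCLT} applies (with the same underlying unit-intensity homogeneous Poisson process $\Phi$ and window $\Lambda_n=[-n/2,n/2)^N$, since neither the process nor the windows were altered). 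Since the equality above holds pointwise (as random variables), subtracting the mean and dividing by $n^{N/2}$ gives exactly
\[
\frac{\beta_{q,\Lambda_n}^{\kappa,r,s}-\mathbb{E}[\beta_{q,\Lambda_n}^{\kappa,r,s}]}{n^{N/2}}
=\frac{\beta_{q,\Lambda_n}^{\kappa_c,\,r,\,s+c}-\mathbb{E}[\beta_{q,\Lambda_n}^{\kappa_c,\,r,\,s+c}]}{n^{N/2}}
\xrightarrow{d}\mathcal{N}(0,\sigma^2),
\]
where $\sigma^2=\sigma^2_{r,s+c}(\kappa_c,q)$ from Theorem~\ref{thm: persbettiCLT}; relabel $\sigma_{r,s}^2:=\sigma^2$ to obtain \eqref{eq: persbettiCLTextended}. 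For $r\le s$ no modification is needed.

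The main obstacle I anticipate is the lemma identifying $D_q(\K^{\kappa_c}(\Xi))$ with the shifted verbose diagram of $\mathcal{C}(\K^{\kappa}(\Xi))$: one must check that delaying the $(q{+}1)$-simplices does not create or destroy homology classes in degree $q$ in any unexpected way, and that the singular value decomposition / matrix-reduction bookkeeping (Section~\ref{subsec: verbose} and \cite[Ch.~VII]{edelsbrunner2010}) behaves as claimed — in particular that a diagonal verbose point with $\ell_{\K}(x)=\ell_{\K}(y)=t$ becomes precisely the off-diagonal point $(t,t+c)$ rather than $(t',t+c)$ for some shifted $t'$, which requires that the birth time $\ell_{\K^{\kappa_c}}(x)$ of the $q$-cycle $x$ is unchanged (true, since $x$ is a $q$-chain and only $(q{+}1)$-simplices were delayed) while the death time is exactly $\ell_{\K^{\kappa_c}}(y)=\ell_{\K}(y)+c=t+c$. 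Care is also needed for the $q=0$ edge case, where ``delaying $1$-simplices'' interacts with the regularity condition, and for checking measurability of $\kappa_c$; both are routine once the setup is fixed. This is the same mechanism used in the proof of Theorem~\ref{thm: vagueconvverbose}, so I would cross-reference that argument (in Section~\ref{subsec: pf}) rather than redo the FCC-level verification from scratch, and likewise borrow Proposition~\ref{prop: cycleboundary} there if the chain-level identification is phrased through cycle and boundary bases.
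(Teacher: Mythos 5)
Your overall route is exactly the paper's: push the degree-$q$ verbose diagram off the diagonal by delaying higher-dimensional simplices, identify the extended persistent Betti number with an ordinary persistent Betti number of the modified filtration via Proposition~\ref{prop: persbettimeasure} together with the ordinary fundamental lemma, and then invoke Theorem~\ref{thm: persbettiCLT}. The paper implements this with the $t$-shift $\kappa^{q,t}$ for $t=r-s$, Lemma~\ref{lem: shift}, and the identity $\beta_{q,\Lambda_n}^{\kappa,r,s}=\beta_{q,\Lambda_n}^{\kappa^{q,t},r,r}$ from Equation~\eqref{eq: bettishift}; your choice of a shift $c>r$ and target region $[0,r]\times(s+c,\infty]$ is only a cosmetic variant (any shift $\geq r-s$ works).

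One concrete step would fail as written, however: your $\kappa_c$ delays only the $(q+1)$-simplices, and you assert monotonicity is ``immediate.'' It is not: take $\sigma\subset\tau$ with $\dim\sigma=q+1$, $\dim\tau=q+2$ and $\kappa(\sigma)=\kappa(\tau)$ (easy for the Vietoris--Rips function, when the diameter of $\tau$ is realized by an edge of $\sigma$); then $\kappa_c(\sigma)>\kappa_c(\tau)$, so (K1) fails, the sublevel sets $K^{\kappa_c}(X,t)$ need not even be simplicial complexes, and Theorem~\ref{thm: persbettiCLT} cannot be applied to $\kappa_c$. The paper's shift adds $t$ to \emph{all} simplices of dimension $>q$, which preserves (K1)--(K3) and still translates the degree-$q$ verbose diagram by exactly $(0,t)$ (Lemma~\ref{lem: shift}), since only $q$- and $(q+1)$-chains enter that diagram; with this correction your argument goes through verbatim, and the shift lemma you planned to prove or cross-reference is precisely Lemma~\ref{lem: shift} as used in the proof of Theorem~\ref{thm: vagueconvverbose}. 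Also, the aside about adjoining a mark recording the simplex dimension is unnecessary and does not fit the framework (marks are attached to points, not simplices): the dimension of $\sigma$ is $|\sigma|-1$, which the modified filtration function may use directly, exactly as $\kappa^{k,t}$ does; and since Theorem~\ref{thm: persbettiCLT} is stated for an unmarked Poisson process, you should remain in the unmarked setting throughout.
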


\subsection{Proofs} \label{subsec: pf}

We begin by introducing the method of `shifting the diagram' (described in the paragraph \textbf{Main ideas behind the proofs} from the introduction). This method will be used in the proofs of Theorems~\ref{thm: vagueconvverbose}, \ref{thm: persbettiSLLNextended}, and \ref{thm: persbettiCLTextended}.

\begin{definition}
    Let \(\kappa\) be a filtration function for an unmarked point set (resp. marked point sets).  
    For a real number \(t \geq 0\) and an integer \(k \geq 0\), a \emph{\(t\)-shift of \(\kappa\) at degree \(k\)} is the filtration function \(\kappa^{k, t}: \mathscr{F}(\R^N) \to [0, \infty]\) (resp. \(\kappa^{k, t}: \mathscr{F}(\R^N \times \M) \to [0, \infty]\)) defined by
    \[
        \kappa^{k, t}(\sigma) =
        \begin{cases}
            \kappa(\sigma), &\quad \text{if } \dim \sigma \leq k, \\
            \kappa(\sigma) + t, &\quad \text{otherwise}.
        \end{cases}
    \]
\end{definition}
We show that \(\kappa^{k, t}\) is indeed a filtration function.
    The assumption that \(\kappa\) is measurable directly implies that \(\kappa^{k,t}\) is also measurable.
    Conditions (K1) and (K2) (or (K2')) follow immediately.  
    Since $\kappa$ is a filtration function, there exists an increasing function \(\rho: [0, \infty] \to [0, \infty]\) 
    with \(\rho(t) < \infty\) for \(t < \infty\) such that
    \[
        \|x - y\| \leq \rho(\kappa(\{x, y\})).
    \]
    Since \(\kappa \leq \kappa^{k, t}\), we have
    \[
        \|x - y\| \leq \rho(\kappa(\{x, y\})) \leq \rho(\kappa^{k, t}(\{x, y\})),
    \]
    which shows that condition (K3) (or (K3')) holds.

We write \(D_{\mathrm{Ver}, q}^\kappa(X)\) for \(D_{\mathrm{Ver}, q}(\mathcal{C}(\K^\kappa(X)))\), and  \(\xi_{\mathrm{Ver}, q}^\kappa(X)\) for \(\xi_{\mathrm{Ver}, q}(\mathcal{C}(\K^\kappa(X)))\).
Also, for any multiset \(A\) of points in \(\overline{\R_{\geq 0}}^2\) and for any vector \(v \in \R^2\), let
\[
    A + v := \{a + v: a \in A\},
\]
where we set \(\infty + x := \infty\) for any $x\in \overline{\R_{\geq 0}}$.

\begin{lemma} \label{lem: shift}
    Let \(X\) be a finite subset of \(\R^N\) (resp. a simple marked point set), and let \(\kappa\) be a filtration function for unmarked point sets (resp. for marked point sets).  
    Then \(D_{\mathrm{Ver}, q}^{\kappa^{k, t}}(X)\) is obtained by translating the points of \(D_{\mathrm{Ver}, q}^\kappa(X)\) as follows:
    \[
        D_{\mathrm{Ver}, q}^{\kappa^{k, t}}(X) = 
        \begin{cases}
            D_{\mathrm{Ver}, q}^\kappa(X), &\quad \text{if } k > q, \\
            D_{\mathrm{Ver}, q}^\kappa(X) + (0, t), &\quad \text{if } k = q, \\
            D_{\mathrm{Ver}, q}^\kappa(X) + (t, t), &\quad \text{otherwise.}
        \end{cases}
    \]
\end{lemma}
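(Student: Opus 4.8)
The plan is to track what happens to a singular value decomposition of the boundary maps when we pass from the $\kappa$-filtration to the $\kappa^{k,t}$-filtration. Recall that the verbose diagram $D_{\mathrm{Ver},q}^\kappa(X)$ is computed from an SVD $((y_1,\dots,y_n),(x_1,\dots,x_m))$ of the restricted boundary map $\partial_{q+1}\colon C_{q+1}(K)\to \mathrm{Ker}\,\partial_q\subset C_q(K)$, where $C_*(K)$ is the simplicial chain complex of the final complex $K=\bigcup_{t\ge 0}K^\kappa(X,t)$, equipped with the vector filtration function $\ell=\ellK{\K^\kappa(X)}$ of Equation~\eqref{eq: vectfilt}. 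The key observation is that the underlying chain complex $(C_*(K),\partial)$ is unchanged when we replace $\kappa$ by $\kappa^{k,t}$ (note $\kappa$ and $\kappa^{k,t}$ are finite on the same simplices since $\kappa\le\kappa^{k,t}\le\kappa+t$, so the final complex $K$ is the same), and only the filtration function $\ell$ changes. Writing $\ell'=\ellK{\K^{\kappa^{k,t}}(X)}$, Equation~\eqref{eq: vectfilt} together with \eqref{eq: filtfuncandbirthtime} gives, for a chain $z=\sum_i\lambda_i\sigma_i$ supported on simplices of a fixed dimension $d$, that $\ell'(z)=\ell(z)$ if $d\le k$ and $\ell'(z)=\ell(z)+t$ if $d>k$. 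The SVD algorithm only ever operates within a single graded piece $C_d$ at a time, so orthogonality of a basis of $C_d$ with respect to $\ell|_{C_d}$ is equivalent to orthogonality with respect to $\ell'|_{C_d}$ (the two differ by the global additive constant $0$ or $t$); hence the \emph{same} pair $((y_i),(x_i))$ remains a valid SVD of $\partial_{q+1}$ with respect to $\ell'$.

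With this in hand the three cases are immediate bookkeeping. The diagram $D_{\mathrm{Ver},q}$ consists of the pairs $(\ell(x_i),\ell(y_i))$ for $i\le r$ (where $x_i\in C_q$, $y_i\in C_{q+1}$) and $(\ell(x_i),\infty)$ for $r<i\le m$ (again $x_i\in C_q$). First I would treat $k>q$: then $q\le k$ and $q+1\le k$ as well, so $\ell'$ agrees with $\ell$ on both $C_q$ and $C_{q+1}$, and every coordinate of every point is unchanged — giving $D_{\mathrm{Ver},q}^{\kappa^{k,t}}(X)=D_{\mathrm{Ver},q}^\kappa(X)$. Next, $k=q$: here $\ell'=\ell$ on $C_q$ but $\ell'=\ell+t$ on $C_{q+1}$, so each birth coordinate $\ell(x_i)$ is unchanged while each finite death coordinate $\ell(y_i)$ increases by $t$; the infinite death coordinates stay $\infty$ by the convention $\infty+t:=\infty$. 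This is exactly translation by $(0,t)$. Finally $k<q$, i.e. $k\le q-1$: then both $q>k$ and $q+1>k$, so $\ell'=\ell+t$ on both $C_q$ and $C_{q+1}$, every birth coordinate and every finite death coordinate increases by $t$, and the infinite ones are again fixed by the convention — this is translation by $(t,t)$.

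The only genuine subtlety, and the step I would be most careful about, is the claim that a single fixed SVD remains valid after the shift — more precisely, that the SVD algorithm's output is insensitive to adding a constant to the filtration function on a graded piece. This rests on the fact that for a fixed dimension $d$, a finite set of vectors in $C_d$ is $\ell|_{C_d}$-orthogonal if and only if it is $(\ell|_{C_d}+t)$-orthogonal, which follows directly from the definition of orthogonality since the defining identity $\ell(\sum\lambda_ix_i)=\max\{\ell(x_i):\lambda_i\ne 0\}$ is preserved under adding $t$ to both sides; and the three SVD conditions — that $(y_{r+1},\dots,y_n)$ is an orthogonal basis of $\mathrm{Ker}\,\partial_{q+1}$, that $(x_1,\dots,x_r)$ is an orthogonal basis of $\mathrm{Im}\,\partial_{q+1}\subset C_q$, and that $\partial_{q+1}(y_i)=x_i$ — involve only orthogonality within $C_{q+1}$ and within $C_q$ separately, plus the purely linear-algebraic condition $\partial_{q+1}(y_i)=x_i$ which does not involve $\ell$ at all. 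Since by \cite[Theorem~7.1]{usherzhang2016} the verbose diagram is independent of the choice of SVD, reading off $D_{\mathrm{Ver},q}^{\kappa^{k,t}}(X)$ from this particular SVD yields the stated formulas. I would also note in passing that the monotonicity and finiteness of $\kappa^{k,t}$ needed to even form the FCC $\mathcal{C}(\K^{\kappa^{k,t}}(X))$ were already verified in the discussion immediately preceding the lemma.
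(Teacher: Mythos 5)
Your proposal is correct and follows essentially the same route as the paper's proof: both establish that the filtration function shifts by $0$ or $t$ on each graded piece (the paper's Equation~\eqref{eq: vffshift}), deduce that orthogonality and hence a fixed singular value decomposition of $\partial_{q+1}$ remain valid for the shifted FCC, and then read off the three cases from the SVD. Your additional remarks (the final complex being unchanged, the $\infty+t=\infty$ convention, and the case bookkeeping) only make explicit what the paper leaves implicit.
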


\begin{figure}
    \centering
    \includegraphics[width=0.9\linewidth]{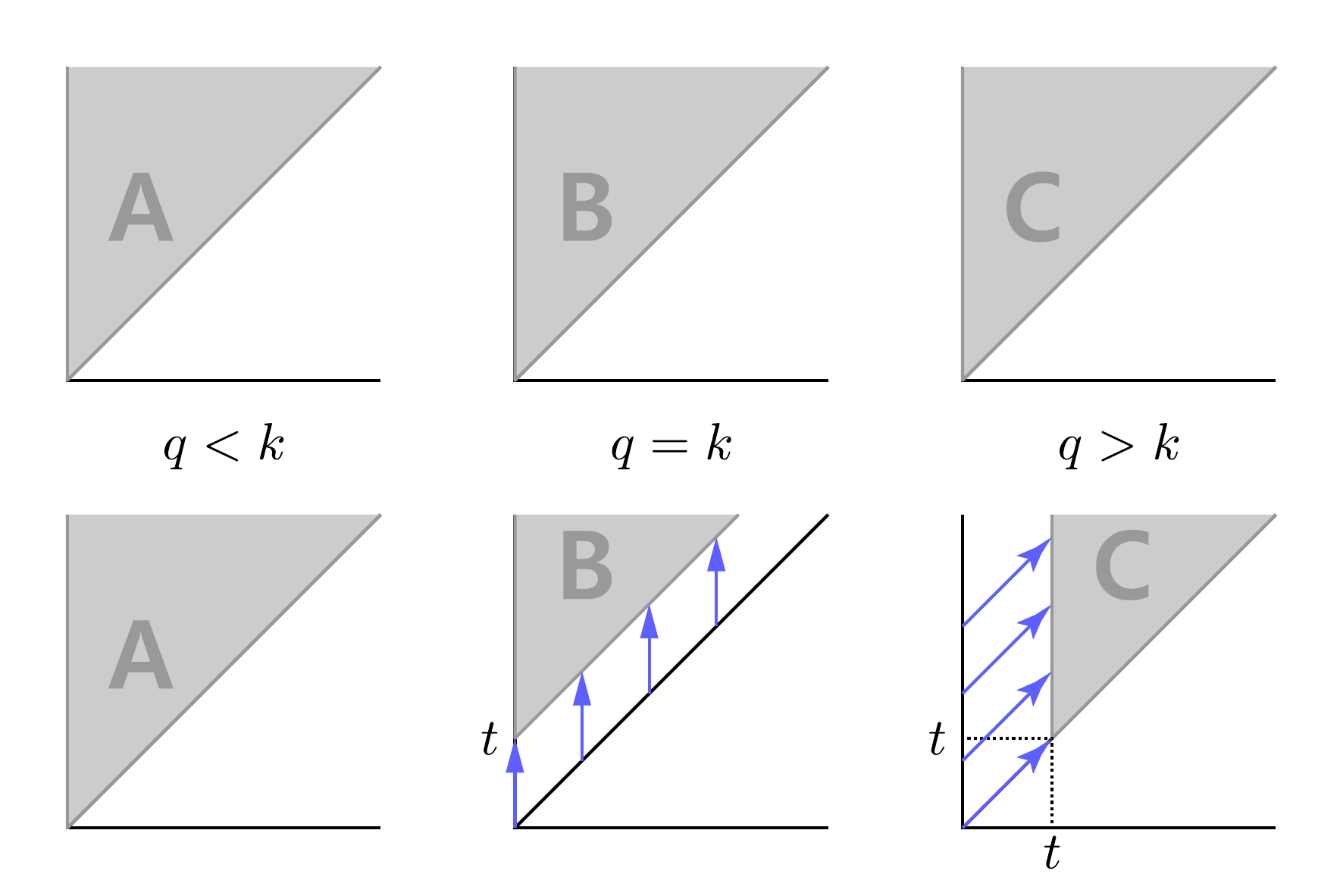}
    \caption{Illustration of Lemma \ref{lem: shift}.
    }
    \label{fig: t-shift}
\end{figure}
See Figure~\ref{fig: t-shift}. 

\begin{proof}
    Consider the FCC \(\mathcal{C}(\K^\kappa(X))\) defined as in Equation \eqref{eq: filtrationFCC}, which will be simply denoted by \((C_*, \partial, \ellK{\K^\kappa(X)})\). Then, we have \(\mathcal{C}(\K^{\kappa^{k, t}}(X))=(C_*, \partial, \ellK{\K^{\kappa^{k, t}}})\).
    For any integer \(q \geq 0\), let \(C_q\) be the \(q\)-chain subgroup of \(C_*\).
    Let \(x \in C_q\) for some \(q\), and we write  \(\dim x := q\).
    From Equations~\eqref{eq: vectfilt}~and~\eqref{eq: filtfuncandbirthtime}, we have:
    \begin{equation}
        \ellK{\K^{\kappa^{k, t}}(X)}(x) =
        \begin{cases}
            \ellK{\K^{\kappa}(X)}(x), &\quad \text{if } \dim x \leq k, \\
            \ellK{\K^{\kappa}(X)}(x) + t, &\quad \text{otherwise}.
        \end{cases}
        \label{eq: vffshift}
    \end{equation}
    Fix an integer \(q \geq 0\) and let \(\{x_1, \dots, x_r\}\) be an orthogonal set in \(C_q\) with respect to \(\ellK{\K^{\kappa}(X)}\).  
    Then, by Equation \eqref{eq: vffshift}, it is not difficult to see that \(\{x_1, \dots, x_r\}\) is also orthogonal with respect to \(\ellK{\K^{\kappa^{k, t}}(X)}\). 
    Therefore, a singular value decomposition of the boundary map \(\partial_{q + 1}: C_{q+1} \to \mathrm{Ker}\,\partial_q\) in \(\mathcal{C}(\K^\kappa(X))\) is also a singular value decomposition of \(\partial_{q + 1}\) in \(\mathcal{C}(\K^{\kappa^{k, t}}(X))\).
    Hence, \(D_{\mathrm{Ver}, q}^{\kappa^{k, t}}(X)\) is obtained by shifting the points in \(D_{\mathrm{Ver}, q}^\kappa(X)\) according to the rule in Equation~\eqref{eq: vffshift}.  
    This completes the proof.
\end{proof}

For any measure \(\mu\) on \(\overline{\R_{\geq 0}}^2\) and any \(v = (v_1, v_2) \in \R^2\) with \(v_1, v_2 \geq 0\), let \(\mu^{(v)}\) be the measure on \(\overline{\R_{\geq 0}}^2\) given by $A \mapsto \mu(A + v)$. For example, the equation in Lemma~\ref{lem: shift} is equivalent to:
    \begin{equation}\label{eq:translation}
       \xi_{\mathrm{Ver}, q}^\kappa (X) = 
        \begin{cases}
            \xi_{\mathrm{Ver}, q}^{\kappa^{k, t}}(X), &\quad \text{if } k > q, \\
           \xi_{\mathrm{Ver}, q}^{\kappa^{k, t}}(X)^{(0, t)}, &\quad \text{if } k = q, \\
            \xi_{\mathrm{Ver}, q}^{\kappa^{k, t}}(X)^{(t, t)}, &\quad \text{otherwise.}
        \end{cases}
    \end{equation}
The next lemma will be useful in the proof of Theorem~\ref{thm: vagueconvverbose}.

\begin{lemma}[Translations on \(\Delta'\)] \label{lem: topondelta'}
    For any real numbers \(u \geq t > 0\), we have:
    \begin{enumerate}[label=(\roman*)]
        \item (Preservation of compactness) A subset \(A\) of \(\Delta'\) is compact if and only if \(A + (t, u)\) is compact; \label{lemitem: compactondelta'}
        \item (Change of variables) For any measure \(\mu\) on \(\Delta'\) and a \(\mu^{((t, u))}\)-integrable function \(f: \Delta' \to \R\), we have
        \[
            \int_{\Delta'} f \, d\mu^{((t, u))} = \int_{\Delta' + (t, u)} f(x - (t, u)) \, d\mu(x).
        \] \label{lemitem: integrationondelta'}
    \end{enumerate}
\end{lemma}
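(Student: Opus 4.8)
The plan is to realize ``translation by $(t,u)$'' as the restriction of a homeomorphism of $\overline{\R_{\geq 0}}^2$ and then deduce both parts from standard topological and measure-theoretic facts. First I would set $T\colon\overline{\R_{\geq 0}}^2\to\overline{\R_{\geq 0}}^2$, $T(x,y)=(x+t,\,y+u)$, using the convention $\infty+c:=\infty$, and show that $T$ is a homeomorphism onto $[t,\infty]\times[u,\infty]$. This reduces to the one-variable statement that $s\mapsto s+c$ is a homeomorphism $[0,\infty]\to[c,\infty]$, which holds because it is an order isomorphism onto a convex subinterval (whose order topology agrees with its subspace topology), and then one takes the product of the two coordinate maps. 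This is precisely the step where the point at infinity has to be handled with care: one must check continuity of $T$ and of $T^{-1}$ along the ``infinity edge'' $\overline{\R_{\geq 0}}\times\{\infty\}$, and that $T$ remains injective there.

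Next I would check that $T(\Delta')=\Delta'+(t,u)$ and, crucially using $u\geq t>0$, that $\Delta'+(t,u)\subseteq\Delta'$: if $0\leq x\leq y$ then $x+t\leq y+t\leq y+u$, and $(\infty,\infty)$ is never in the image since $T(x,y)=(\infty,\infty)$ forces $x=y=\infty$. One also notes that $\Delta'+(t,u)$ is closed (hence Borel) in $\Delta'$. Consequently $T$ restricts to a homeomorphism from $\Delta'$ onto $\Delta'+(t,u)$. Part~(i) is then immediate: a homeomorphism both preserves and reflects compactness, and since $T(A)=A+(t,u)$ and compactness of a subspace is intrinsic (independent of the ambient space), $A$ is compact if and only if $A+(t,u)$ is.

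For part~(ii) I would identify $\mu^{((t,u))}$ as an image measure. For every Borel $A\subseteq\Delta'$ we have $\mu^{((t,u))}(A)=\mu(A+(t,u))=\mu(T(A))$, and since $\mu$ is carried by $\Delta'$ and $T(A)\subseteq\Delta'+(t,u)\subseteq\Delta'$, this equals $\bigl(\mu|_{\Delta'+(t,u)}\bigr)(T(A))$; that is, $\mu^{((t,u))}$ is the image of $\mu|_{\Delta'+(t,u)}$ under the homeomorphism $T^{-1}\colon\Delta'+(t,u)\to\Delta'$. The standard change-of-variables formula for image measures then gives, for every $\mu^{((t,u))}$-integrable $f\colon\Delta'\to\R$,
\[
    \int_{\Delta'} f\,d\mu^{((t,u))}=\int_{\Delta'+(t,u)} (f\circ T^{-1})\,d\mu=\int_{\Delta'+(t,u)} f\bigl(x-(t,u)\bigr)\,d\mu(x),
\]
where $T^{-1}(x)=x-(t,u)$ on $\Delta'+(t,u)$ (with the convention $\infty-u:=\infty$), and where the equivalence between $\mu^{((t,u))}$-integrability of $f$ and $\mu$-integrability of $f\circ T^{-1}$ on $\Delta'+(t,u)$ is part of the same theorem. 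The only genuine obstacle, as flagged in the introduction, lies in the first two steps, namely confirming that the convention $\infty+c=\infty$ makes $T$ a bona fide homeomorphism of $\Delta'$ onto $\Delta'+(t,u)$; everything after that is routine.
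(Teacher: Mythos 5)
Your proposal is correct, but it takes a genuinely different (and more intrinsic) route than the paper. The paper never works with the shift map directly on $\overline{\R_{\geq 0}}^2$: it transfers everything to the bounded set $T' = \{(a,b)\in\R^2 : -1\le a\le b\le 0\}\setminus\{(0,0)\}$ via the chart $h(a,b)=(-e^{-a},-e^{-b})$ (with the infinity line sent to the segment $b=0$), observes that under $h$ the translation by $(t,u)$ becomes the linear map $F=\mathrm{diag}(e^{-t},e^{-u})$, and then deduces (i) from the fact that $h$ and $F$ are homeomorphisms and (ii) from a chain of applications of the abstract change-of-variables formula through $h$ and $F$. You instead prove directly that the shift $T(x,y)=(x+t,y+u)$ is a homeomorphism of $\overline{\R_{\geq 0}}^2$ onto $[t,\infty]\times[u,\infty]$ (coordinatewise, as an order isomorphism of the compact ordered space $[0,\infty]$ onto $[c,\infty]$), check that $T$ restricts to a homeomorphism of $\Delta'$ onto $\Delta'+(t,u)\subseteq\Delta'$ (this is where $u\ge t>0$ and the exclusion of $(\infty,\infty)$ enter, exactly the subtle point), and then get (i) for free and (ii) from a single application of the image-measure change-of-variables formula after identifying $\mu^{((t,u))}$ as the pushforward of $\mu|_{\Delta'+(t,u)}$ under $T^{-1}$. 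Your route avoids the auxiliary chart and condenses the integration step into one invocation of the standard theorem; the paper's conjugation trick buys a reusable Euclidean picture of $\Delta'$ (the same $h$ reappears, e.g., in the total-mass proof to verify compactness of $\{\alpha\}\times[\alpha,\infty]$) and makes the behaviour at the infinity line automatic once $h$ is checked to be a homeomorphism. Two small points you assert without proof—that $\Delta'+(t,u)$ is closed in $\Delta'$, and that the subspace and order topologies agree on $[c,\infty]$—are both true and routine, so they do not affect correctness.
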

Both statements appear natural; however, particular attention must be given to the ``infinity line'' $[0, \infty) \times \{\infty\}$ in $\Delta'$ and thus we include their proofs.

\begin{proof}
   Consider the subspace \(
        T' := \{(a, b) \in \R^2: -1 \leq a \leq b \leq 0\} \setminus \{(0, 0)\}
    \) of $\R^2$.
   Then, the map \(h: \Delta' \to T'\) given by
    \[
        h(a, b) =
        \begin{cases}
            (-e^{-a}, -e^{-b}), &\quad \text{if } b < \infty, \\
            (-e^{-a}, 0), &\quad \text{otherwise}
        \end{cases}
    \]
     is a homeomorphism. Also, for the linear map \(F: \R^2 \to \R^2\) given by
        \[(x_1, x_2)\mapsto
       (x_1, x_2) \begin{pmatrix}
            e^{-t} & 0 \\
            0 & e^{-u}
        \end{pmatrix},        
    \]
    we have
    \begin{equation} \label{eq: translationhomeo}
        h(x + (t, u)) = F(h(x)).
    \end{equation}

    We prove \Cref{lemitem: compactondelta'}. Since both $h$ and \(F\) are homeomorphisms, $A$ is compact iff
    \(F(h(A))\) is compact iff $h(A+(t,u))$ is compact iff $A+(t,u)$ is compact.
    
    \Cref{lemitem: integrationondelta'} can be shown using the standard change of variables for integration \cite[Proposition~2.6.8]{cohn2013measure} along with Equation~\eqref{eq: translationhomeo}:
    \begin{align*}
        \int_{\Delta'} f \, d\mu^{((t, u))} 
        &= \int_{\Delta'} (f \circ h^{-1} \circ h) \, d\mu^{((t, u))} \\
        &= \int_{T'} (f \circ h^{-1}) \, d(\mu^{((t, u))}  h^{-1}) \\
        &= \int_{T'} (f \circ h^{-1}) \, d(\mu[ (h^{-1} \circ F \circ h) \circ h^{-1}]) & \text{by \eqref{eq: translationhomeo}, applying \(h^{-1}\) on both sides} \\
        &= \int_{T'} (f \circ h^{-1}) \, d(\mu [h^{-1} \circ F]) \\
        &= \int_{F(T')} (f \circ h^{-1} \circ F^{-1}) \, d(\mu h^{-1}) \\
        &\stackrel{(\ast)}{=} \int_{F(T')} f(h^{-1}(y) - (t, u)) \, d(\mu h^{-1})(y)\\ 
        &= \int_{h(\Delta' + (t, u))} f(h^{-1}(y) - (t, u)) \, d(\mu h^{-1})(y) & \text{by \eqref{eq: translationhomeo}, since \(T' = h(\Delta')\)} \\
        &= \int_{\Delta' + (t, u)} f(x - (t, u)) \, d\mu(x),
    \end{align*}
    where equation $(\ast)$ follows by setting \(x = h^{-1}(y) - (t, u)\) in Equation \eqref{eq: translationhomeo} and applying $h^{-1}\circ F^{-1}$ to the both sides.
\end{proof}

\begin{proof}[Proof of Theorem~\ref{thm: vagueconvverbose}]
    Fix \(t > 0\). 
    By Theorem~\ref{thm: vagueconv}, there exists a Radon measure \(\nu_q\) on \(\Delta\) such that 
    \begin{equation} \label{eq: eta_n convergence}
        \eta_n := \frac{1}{\vol{\Lambda_n}} \mathbb{E}[\xi_{q, \Lambda_n}^{\kappa^{q, t}}] \xrightarrow{v} \nu_q \quad \text{as } n \to \infty.
    \end{equation}
    For the first part of the theorem, we will show that (i)
    \(\nu_q'\) is a Radon measure, and that (ii)  
     \(\frac{1}{\vol{\Lambda_n}} \mathbb{E}[\xi_{\mathrm{Ver}, q, \Lambda_n}^\kappa] \xrightarrow{v} \nu_q':= \nu_q^{((0, t))}\) as \(n \to \infty\).
    Once these are proved, the sequence cannot converge to a Radon measure other than $\nu_q'$ in  \(\mathscr{{R}}(\Delta')\) since the space \(\mathscr{{R}}(\Delta')\) is metrizable \cite[Theorem~4.2]{kallenberg2017}.  
    
    For Item (i), let \(F\) be a compact subset of \(\Delta'\).  
    By Lemma~\ref{lem: topondelta'}~\ref{lemitem: compactondelta'}, the set \(F + (0, t)\) is compact (and contained in \(\Delta\)).  
    Since \(\nu_q\) is a Radon measure on \(\Delta\), we have
    \(
        \nu_q'(F) = \nu_q(F + (0, t)) < \infty,
    \) as desired.  Next we prove Item (ii).
    
    For \(n \geq 1\), we have 
    \begin{equation} \label{eq: shiftofverbosediagram}
    \begin{aligned} 
        \xi_{\mathrm{Ver}, q, \Lambda_n}^\kappa &= (\xi_{\mathrm{Ver}, q, \Lambda_n}^{\kappa^{q, t}})^{((0, t))}&\mbox{by \Cref{eq:translation}}  \\&= (\xi_{q, \Lambda_n}^{\kappa^{q, t}})^{((0, t))},
     \end{aligned}   
    \end{equation}
    because, for the filtration \(\kappa^{q, t}\),
    the degree-$q$ verbose diagram $\xi_{\mathrm{Ver}, q, \Lambda_n}^{\kappa^{q, t}}$ and the degree-$q$ concise diagram $\xi_{q, \Lambda_n}^{\kappa^{q, t}}$ coincide: Recall that the concise diagram is obtained by deleting all points on the diagonal from the verbose diagram.
     Every point in \(\xi_{\mathrm{Ver}, q, \Lambda_n}^{\kappa^{q, t}}\) is a translation of a point in \(\xi_{\mathrm{Ver}, q, \Lambda_n}^\kappa\) by \((0, t)\), and thus none of them lies on the diagonal, which makes the verbose and concise diagram coincide. 
    
    Moreover, by \Cref{eq:expectation,eq: shiftofverbosediagram}, for any Borel set \(A \subset \Delta'\), we have  
    \begin{align*}
        \mathbb{E}[\xi_{\mathrm{Ver}, q, \Lambda_n}^\kappa](A) 
        &= \mathbb{E}[\xi_{\mathrm{Ver}, q, \Lambda_n}^\kappa(A)] & \text{by Equation~\eqref{eq:expectation}} \\
        &= \mathbb{E}[(\xi_{q, \Lambda_n}^{\kappa^{q, t}})^{((0, t))}(A)] & \text{by Equation~\eqref{eq: shiftofverbosediagram}} \\
        &= \mathbb{E}[\xi_{q, \Lambda_n}^{\kappa^{q, t}}(A + (0, t))] & \text{by definition} \\
        &= \mathbb{E}[\xi_{q, \Lambda_n}^{\kappa^{q, t}}](A + (0, t)) & \text{by Equation~\eqref{eq:expectation}} \\
        &= \mathbb{E}[\xi_{q, \Lambda_n}^{\kappa^{q, t}}]^{((0, t))}(A). & \text{by definition}
    \end{align*}
        Hence, for \(\mu_n := \frac{1}{\vol{\Lambda_n}} \mathbb{E}[\xi_{\mathrm{Ver}, q, \Lambda_n}^\kappa]\),
        we have
    \begin{equation} \label{eq: munu}
        \mu_n = \eta_n^{((0, t))}.
    \end{equation}
    Let \(f : \Delta' \to \R\) be a continuous function with compact support.  
    It suffices to
    show that
    \begin{equation} \label{eq: convergence}
        \lim_{n \to \infty} \int_{\Delta'} f\, d\mu_n = \int_{\Delta'} f\, d\nu_q'.
    \end{equation}
    By {Equation~\eqref{eq: munu}} and Lemma~\ref{lem: topondelta'}~\ref{lemitem: integrationondelta'}, we have
    \begin{align} \label{eq: changeofvariables1}
        \begin{split}
            \lim_{n \to \infty} \int_{\Delta'} f \, d\mu_n 
            &= \lim_{n \to \infty} \int_{\Delta'} f \, d\eta_n^{((0, t))} \\
            &= \lim_{n \to \infty} \int_{\Delta' + (0, t)} f(x - (0, t)) \, d\eta_n(x).
        \end{split}
    \end{align}

Let $g:\Delta\rightarrow \R$ be any extension of the function $f(\bullet-(0,t)):\Delta'+(0,t)\rightarrow \R$, which is continuous and compactly supported. One possible choice of $g$ is defined by linear interpolation between zero and the values of $f$ on the line $y=x+t$  within the region \(\{(x,y)\in \Delta:x + t/2 \leq y < x + t\}\).
    Then, for each \(n \geq 1\), since \(\eta_n\) and \(\nu_q\) vanish below the line \(y = x + t\), we have
    \begin{align*}
        \int_{\Delta' + (0, t)} f(x - (0, t))\, d\eta_n(x) &= \int_\Delta g(x)\, d\eta_n(x), \text{ and}\\
        \int_{\Delta' + (0, t)} f(x - (0, t))\, d\nu_q(x) &= \int_\Delta g(x)\, d\nu_q(x).
    \end{align*}
    These equations imply the first and third equalities below:
    \begin{equation} \label{eq: changeofvariables2}
        \begin{aligned}
            \lim_{n \to \infty} \int_{\Delta' + (0, t)} f(x - (0, t))\, d\eta_n(x)
            &= \lim_{n \to \infty} \int_\Delta g(x)\, d\eta_n(x) \\
            &= \int_\Delta g(x)\, d\nu_q(x) 
            & \text{by Equation~\eqref{eq: eta_n convergence}} \\
            &= \int_{\Delta' + (0, t)} f(x - (0, t))\, d\nu_q(x) \\
            &= \int_{\Delta'} f\, d(\nu_q)^{((0, t))}. 
            & \text{by Lemma~\ref{lem: topondelta'}~\ref{lemitem: integrationondelta'}}
        \end{aligned}
    \end{equation}
    {Now Equation~\eqref{eq: convergence} follows from Equations~\eqref{eq: changeofvariables1}~and~\eqref{eq: changeofvariables2}, completing the proof of Item (ii).} 
   {This ends the proof for the first part of the theorem.}
    
    Next, assume that $\Phi$ is ergodic. We  again exploit Theorem~\ref{thm: vagueconv}, especially its second part.   In the preceding proof,  simply 
    redefine \(\eta_n := \frac{1}{\vol{\Lambda_n}} \xi_{q, \Lambda_n}^{\kappa^{q, t}}\) and \(\mu_n := \frac{1}{\vol{\Lambda_n}} \xi_{\mathrm{Ver}, q, \Lambda_n}^\kappa\), 
    remove every \(\mathbb{E}\), and assume that all convergences hold almost surely.
    The result follows analogously, since \(\eta_n \xrightarrow{v} \nu_q\) almost surely.
\end{proof}

{Next, we establish a connection between the points in a verbose diagram and the generators of the cycle and boundary groups of the filtration that the diagram is derived from:
Let \(\K = \{K_t\}_{t \in [0, \infty)}\) be a filtration of a finite simplicial complex \(K\) with \(\bigcup_{t \geq 0} K_t = K\).
Then, consider the FCC \(\mathcal{C}(\K) = (C_*(K), \partial, \ellK{\K})\), where \(C_*(K) = \bigoplus_{q = 0}^\infty C_q\), and for each $q\geq 0$, \(\partial_{q+1}: C_{q+1} \to \mathrm{Ker}\, \partial_q\) denotes the restriction of \(\partial\), which is a linear map.}

Fix \(q \geq 0\) and let \(S = ((y_1, \dots, y_n), (x_1, \dots, x_m))\) be a singular value decomposition of  {\(\partial_{q+1}\)}.  
Then, by the definition of the verbose diagram, we have a bijection \(\{x_1, \dots, x_m\} \to D_{\mathrm{Ver}, q}(\mathcal{C}(\K))\) defined by
\[
    {x_i \mapsto}
    \begin{cases}
        (\ellK{\K}(x_i), \ellK{\K}(y_i)), &\quad \text{if } i \leq \mathrm{rank}(\partial_{q+1}), \\
        (\ellK{\K}(x_i), \infty), &\quad \text{otherwise.}
    \end{cases}
\]
Let \(c_S: D_{\mathrm{Ver}, q}(\mathcal{C}(\K)) \to \{x_1, \dots, x_m\}\) be the inverse of this map.

\begin{proposition}\label{prop: cycleboundary}
    Let \(\K\), \(q\), \(S = ((y_1, \dots, y_n), (x_1, \dots, x_m))\), \(c_S\) be as in the preceding paragraph.
    Then, for any \(t \in [0, \infty)\),
    \begin{enumerate}[label=(\roman*)]
        \item the set \(P := \{c_S((b, d)) : (b, d) \in D_{\mathrm{Ver}, q}(\mathcal{C}(\K)),\ b \leq t\}\) is a basis for the cycle group \(\mathcal{Z}_q(K_t)\);
        \item the set \(Q := \{c_S((b, d)) : (b, d) \in D_{\mathrm{Ver}, q}(\mathcal{C}(\K)),\ d \leq t\}\) is a basis for the boundary group \(\mathcal{B}_q(K_t)\).
    \end{enumerate}
\end{proposition}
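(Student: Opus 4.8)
The plan is to reduce the proposition to one elementary fact about non-Archimedean normed vector spaces: if $(V,\ell)$ is finite-dimensional and $(v_1,\dots,v_k)$ is an orthogonal basis of $V$, then for every $t\in\R$ the sublevel subspace $V^{\leq t}:=\ell^{-1}([-\infty,t])$ admits $\{v_i : \ell(v_i)\leq t\}$ as a basis. Indeed this family is linearly independent as a subfamily of a basis, and it spans $V^{\leq t}$ because orthogonality gives $\ell\!\left(\sum_i\lambda_i v_i\right)=\max\{\ell(v_i):\lambda_i\neq 0\}$, so any element of $V^{\leq t}$ can only involve those $v_i$ with $\ell(v_i)\leq t$ (this also re-proves that $V^{\leq t}$ is a subspace, consistent with the notation $C_*^\lambda$ of Definition~\ref{def: interleavingdist}).

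Next I would translate the filtration data into this language. Writing $C_q(K_t)$ for the span of the $q$-simplices of $K_t$, Remark~\ref{rmk:meaning of ell} together with $t_{\K}(\sigma)\leq t\iff\sigma\in K_t$ (which uses right-continuity of $\K$) gives $\ellK{\K}(x)\leq t\iff x\in C_q(K_t)$, i.e. $C_q^{\leq t}=C_q(K_t)$. Hence $\mathcal{Z}_q(K_t)=\mathcal{Z}_q(K)\cap C_q(K_t)=(\mathrm{Ker}\,\partial_q)^{\leq t}$ and $\mathcal{B}_q(K_t)=\partial_{q+1}\big(C_{q+1}(K_t)\big)=\partial_{q+1}\big(C_{q+1}^{\leq t}\big)$.

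For (i), apply the fact to $V=\mathrm{Ker}\,\partial_q$ with the orthogonal basis $(x_1,\dots,x_m)$ from $S$: $\{x_i:\ellK{\K}(x_i)\leq t\}$ is a basis of $(\mathrm{Ker}\,\partial_q)^{\leq t}=\mathcal{Z}_q(K_t)$. Since the verbose-diagram point attached to $x_i$ has first coordinate $b=\ellK{\K}(x_i)$ regardless of whether its death is finite or $\infty$, we have $P=\{x_i:\ellK{\K}(x_i)\leq t\}$, proving (i). For (ii), apply the fact to $V=C_{q+1}$ with the orthogonal basis $(y_1,\dots,y_n)$: $\{y_j:\ellK{\K}(y_j)\leq t\}$ is a basis of $C_{q+1}^{\leq t}$, so using $\partial_{q+1}(y_j)=x_j$ for $j\leq r:=\mathrm{rank}\,\partial_{q+1}$ and $\partial_{q+1}(y_j)=0$ for $j>r$ we obtain $\mathcal{B}_q(K_t)=\mathrm{span}\{x_j: 1\leq j\leq r,\ \ellK{\K}(y_j)\leq t\}$; being a subfamily of the linearly independent tuple $(x_1,\dots,x_r)$, this set is a basis of $\mathcal{B}_q(K_t)$. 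Finally the finite-death points of $D_{\mathrm{Ver},q}(\mathcal{C}(\K))$ are exactly the pairs $(\ellK{\K}(x_j),\ellK{\K}(y_j))$ with $j\leq r$, so for $t<\infty$ the condition ``$d\leq t$'' on a diagram point is precisely ``$j\leq r$ and $\ellK{\K}(y_j)\leq t$'', giving $Q=\{x_j: 1\leq j\leq r,\ \ellK{\K}(y_j)\leq t\}$ and hence (ii).

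I do not expect a genuine obstacle; the only points needing care are the identification $C_q^{\leq t}=C_q(K_t)$ — which rests on right-continuity of the filtration so that a simplex lies in $K_t$ iff its birth time is $\leq t$ — and the bookkeeping that matches the two coordinates of each verbose-diagram point with $\ellK{\K}(x_i)$ and $\ellK{\K}(y_i)$ through the bijection defining $c_S$, in particular noting that points with $d=\infty$ never enter $Q$ because $t$ is finite.
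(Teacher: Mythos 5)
Your proposal is correct and is essentially the paper's own argument, merely repackaged: your auxiliary fact that $\{v_i:\ell(v_i)\leq t\}$ spans the sublevel subspace $V^{\leq t}$ is exactly the orthogonality-based spanning step the paper carries out elementwise for $\mathcal{Z}_q(K_t)$ (in the basis $(x_i)$) and for a preimage chain in $C_{q+1}$ (in the basis $(y_i)$), combined with the same observations that $\partial_{q+1}y_i\in\{x_i,0\}$, that subfamilies of a basis are independent, and that $\ellK{\K}(x)\leq t$ iff the chain $x$ lives in $K_t$. Your explicit bookkeeping identifying $P$ and $Q$ with $\{x_i:\ellK{\K}(x_i)\leq t\}$ and $\{x_j: j\leq r,\ \ellK{\K}(y_j)\leq t\}$ matches the paper's opening reduction, so no further changes are needed.
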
 

\begin{corollary} \label{cor: cycleboundarymeasure}
    Let \(\K\), \(q\) be as in Proposition~\ref{prop: cycleboundary}.
    Then, for any \(t \in [0, \infty)\),
    \begin{enumerate}[(i)]
        \item \(\xi_{\mathrm{Ver}, q}(\mathcal{C}(\K))
        ([0, t] \times [0, \infty])
        = \dim \mathcal{Z}_q(K_t)\); \label{item:cycleboundary1}
        \item \(\xi_{\mathrm{Ver}, q}(\mathcal{C}(\K))
        ([0, \infty] \times [0, t])
        = \dim \mathcal{B}_q(K_t)\).\label{item:cycleboundary2}
    \end{enumerate}
\end{corollary}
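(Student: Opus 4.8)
The plan is to obtain Corollary~\ref{cor: cycleboundarymeasure} as a quick consequence of Proposition~\ref{prop: cycleboundary}, which I may assume; for completeness I also indicate the steps behind the Proposition, since that is where the real content sits. Throughout, fix a singular value decomposition $S = ((y_1,\dots,y_n),(x_1,\dots,x_m))$ of $\partial_{q+1}$ and recall from the paragraph preceding the Proposition that $c_S$ is a bijection from $D_{\mathrm{Ver}, q}(\mathcal{C}(\K))$ onto $\{x_1,\dots,x_m\}$, under which the diagram point sent to $x_i$ has first coordinate $b = \ellK{\K}(x_i)$ and second coordinate $d = \ellK{\K}(y_i)$ if $i \le \mathrm{rank}(\partial_{q+1})$ and $d = \infty$ otherwise.

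First I would unwind the Corollary. By Convention~\ref{conv:PD as a measure}, $\xi_{\mathrm{Ver}, q}(\mathcal{C}(\K)) = \sum_{p \in D_{\mathrm{Ver}, q}(\mathcal{C}(\K))} \delta_p$, so for any Borel $B \subset \Delta'$ the number $\xi_{\mathrm{Ver}, q}(\mathcal{C}(\K))(B)$ is just the number of points of the verbose diagram lying in $B$, counted with multiplicity. For $B = [0,t]\times[0,\infty]$ a diagram point $(b,d)$ lies in $B$ exactly when $b \le t$ (the constraint on $d$ is vacuous), and for $B = [0,\infty]\times[0,t]$ it lies in $B$ exactly when $d \le t$, where the hypothesis $t < \infty$ is what excludes the points with $d = \infty$. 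Transporting along the bijection $c_S$, the multiset of diagram points with $b \le t$ corresponds to the set $P$ of Proposition~\ref{prop: cycleboundary}(i) and the multiset with $d \le t$ to the set $Q$ of Proposition~\ref{prop: cycleboundary}(ii); since the $x_i$ are pairwise distinct, these multiset cardinalities equal $|P|$ and $|Q|$. As $P$ and $Q$ are, by the Proposition, bases of $\mathcal{Z}_q(K_t)$ and $\mathcal{B}_q(K_t)$, we get $|P| = \dim \mathcal{Z}_q(K_t)$ and $|Q| = \dim \mathcal{B}_q(K_t)$, which are the two claims.

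For the Proposition itself I would use two ingredients. (a) A linear-algebra fact: if $(v_1,\dots,v_k)$ is an orthogonal basis of a non-Archimedean normed vector space $(V,\ell)$, then for every $t$ the subset $\{v_i : \ell(v_i) \le t\}$ is a basis of $V^t := \ell^{-1}([-\infty,t])$ --- this is immediate from the orthogonality identity $\ell(\sum_i \lambda_i v_i) = \max\{\ell(v_i) : \lambda_i \ne 0\}$. (b) The identifications $\mathcal{Z}_q(K_t) = \mathrm{Ker}\,\partial_q \cap C_q^t$ and $\mathcal{B}_q(K_t) = \partial_{q+1}(C_{q+1}^t)$, where $C_*^t = \ellK{\K}^{-1}([-\infty,t])$; these hold because $\ellK{\K}(x) \le t$ precisely when every simplex occurring in $x$ belongs to $K_t$ (Remark~\ref{rmk:meaning of ell}, together with right-continuity of $\K$, which gives $\tK{\K}(\sigma) \le t$ iff $\sigma \in K_t$), so $C_*^t = C_*(K_t)$, and because the simplicial boundary operator is local, so that cycles and boundaries of $K_t$ coincide with those of $K$ restricted to chains supported on $K_t$. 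Given (a) and (b): for (i), apply (a) to the orthogonal basis $(x_1,\dots,x_m)$ of $\mathrm{Ker}\,\partial_q$, so that $\{x_i : \ellK{\K}(x_i) \le t\} = P$ is a basis of $(\mathrm{Ker}\,\partial_q)^t = \mathrm{Ker}\,\partial_q \cap C_q^t = \mathcal{Z}_q(K_t)$; for (ii), apply (a) to $(y_1,\dots,y_n)$ to see $\{y_i : \ellK{\K}(y_i) \le t\}$ is a basis of $C_{q+1}^t$, hence, writing $r = \mathrm{rank}(\partial_{q+1})$ and using $\partial_{q+1} y_i = x_i$ for $i \le r$ and $\partial_{q+1} y_i = 0$ for $i > r$, $\mathcal{B}_q(K_t) = \partial_{q+1}(C_{q+1}^t) = \mathrm{span}\{x_i : i \le r,\ \ellK{\K}(y_i) \le t\}$; this spanning set sits inside the basis $(x_1,\dots,x_m)$, hence is itself a basis, and it equals $Q$ because the diagram points with index $i > r$ carry second coordinate $\infty > t$.

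I do not expect a genuine obstacle; the work is bookkeeping. The two places that need some care are: checking $C_*^t = C_*(K_t)$ cleanly --- this is where right-continuity of $\K$ and the exact meaning of $\ellK{\K}$ enter, and where one verifies that forming cycles and boundaries commutes with passing to the subcomplex $K_t$ --- and, in the Corollary, making sure the multiset accounting is right, i.e. that $c_S$ pairs each repeated verbose-diagram point with a distinct $x_i$ so multiplicities are not miscounted, and that the closed regions $[0,t]\times[0,\infty]$ and $[0,\infty]\times[0,t]$ select precisely the points picked out by the weak inequalities $b \le t$ and $d \le t$.
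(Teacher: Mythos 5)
Your proof of the corollary is correct and follows the paper's own argument: use the bijection $c_S$ to identify the number of diagram points with $b \le t$ (resp.\ $d \le t$) with $|P|$ (resp.\ $|Q|$), then invoke Proposition~\ref{prop: cycleboundary} to conclude these equal $\dim \mathcal{Z}_q(K_t)$ and $\dim \mathcal{B}_q(K_t)$. Your supplementary sketch of the Proposition itself (via the sub-basis property of orthogonal bases and the identification $C_*^t = C_*(K_t)$) is a sound reorganization of the paper's argument, but since you are entitled to assume the Proposition, the corollary proof stands as essentially identical to the paper's.
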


For any finite set \(A\), let \(|A|\) denote its cardinality.

\begin{proof}
    Since \(c_S\) in Proposition~\ref{prop: cycleboundary} is a bijection, we have that \(|P| = \xi_{\mathrm{Ver}, q}(\mathcal{C}(\K))([0, t] \times [0, \infty])\) and \(|Q| = \xi_{\mathrm{Ver}, q}(\mathcal{C}(\K))([0, \infty] \times [0, t])\).
    Then, both claims follow from Proposition~\ref{prop: cycleboundary}.
\end{proof}

\begin{proof}[Proof of Proposition~\ref{prop: cycleboundary}]
    We first note that $P=\{x_i:\ell_{\K}(x_i)\leq t\}$ and $Q=\{x_i: \ell_{\K}(y_i)\leq t\}$.
    Both \(P\) and \(Q\) are linearly independent, as they are subsets of a basis for {\(\mathrm{Ker}\, \partial_q\).}
    We have \(P \subset \mathcal{Z}_q(K_t)\): if \(x_i \in P\), then \(\ellK{\K}(x_i) \leq t\), and all simplices appearing with nonzero coefficients in \(x_i\) belong to \(K_t\).
    Similarly, \(Q \subset \mathcal{B}_q(K_t)\): if \(x_i \in Q\), then \(\ellK{\K}(y_i) \leq t\), and all simplices appearing with nonzero coefficients in \(y_i\) belong to \(K_t\), so that \(x_i = \partial_{q+1} y_i\) is a boundary in \(K_t\).
    Thus, it remains to show that \(P\) spans \(\mathcal{Z}_q(K_t)\), and \(Q\) spans \(\mathcal{B}_q(K_t)\).

    Suppose that \(z \in \mathcal{Z}_q(K_t) \subset \mathrm{Ker}\, \partial_q\), and we show that $z$ is a linear combination of elements in $P$.  
    By Remark~\ref{rmk:meaning of ell}, we have that \(\ellK{\K}(z)\leq t\).
    Since $\{x_1,\ldots,x_m\}$ is a basis for $\mathrm{Ker}\, \partial_q$, there exist $z_i\in \mathbb{F}$, $i=1,\ldots,m$ such that  \(z=\sum_{i=1}^m z_i x_i\). It suffices to show that if \(\ellK{\K}(x_j) > t\) for some \(j\), then $z_j=0$. 
    Assume that \(\ellK{\K}(x_j) > t\) for some \(j\).  
    Then, since the \(x_i\)'s are orthogonal and \(\ellK{\K}(z) \leq t\),  we have that \(z_j = 0\).  

    Next, let \(w \in \mathcal{B}_q(K_t)\), and we show that $w$ is a linear combination of elements in $Q$.
    Pick any \(v \in C_{q+1}(K_t) \subset C_{q+1}\) such that \(\partial_{q+1} v = w\).
    By Remark~\ref{rmk:meaning of ell}, we have that \(\ellK{\K}(v)\leq t\).
    Since $\{y_1,\ldots,y_n\}$ is a basis for $C_{q+1}$, there exist $v_i\in \mathbb{F}$, $i=1,\ldots,n$ such that  \(v=\sum_{i=1}^n v_i y_i\).
    By the same reasoning as in the previous paragraph,
    we have \(\ellK{\K}(y_i) \leq t\) if \(v_i \neq 0\).
    Since \(w = \partial_{q+1}\left(\sum_{i=1}^n v_i y_i\right)=\sum_{i=1}^n v_i\partial_{q+1}(y_i)\), and each $\partial_{q+1}(y_i)$ is either $x_i$ or $0$, it follows that \(w\) is a linear combination of elements in \(Q\).  
\end{proof}

\begin{lemma}\label{lem:verbose-cardinality} Let \(\kappa\) be a finite-valued filtration function for marked point sets (resp. for unmarked point sets) and let \(X\) be a simple marked point set (resp. finite subset of \(\R^N\)). 
Then,
    \[
        \xi_{\mathrm{Ver}, q}^\kappa(X)(\Delta') =
        \begin{cases}
            |X|, &\quad \text{if } q = 0, \\
            \binom{|X| - 1}{q + 1}, &\quad \text{if } 1 \leq q \leq |X| - 2, \\
            0, &\quad \text{if } q \geq |X| - 1.
        \end{cases}
    \]
\end{lemma}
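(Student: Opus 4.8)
The plan is to reduce this to a linear‑algebra computation of $\dim_{\mathbb{F}}\mathcal{Z}_q(K)$, where $K$ is the final complex of the $\kappa$-filtration on $X$, and then to evaluate that dimension by Pascal's rule. Write $n:=|X|$. Since $X$ is finite and $\kappa$ is finite-valued, $t_0:=\max\{\kappa(\sigma):\varnothing\neq\sigma\subseteq X\}$ is finite and every nonempty $\sigma\subseteq X$ satisfies $\kappa(\sigma)\leq t_0$; hence the final complex $K:=\bigcup_{t\geq 0}K^\kappa(X,t)=K^\kappa(X,t_0)$ is the \emph{full} simplicial complex on the vertex set $X$, i.e.\ combinatorially an $(n-1)$-simplex. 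In particular $K$ is finite (so $\mathcal{C}(\K^\kappa(X))$ is a genuine FCC), $\dim_{\mathbb{F}}C_q(K)=\binom{n}{q+1}$ for every $q\geq 0$, and $K$ is contractible, so $H_q(K;\mathbb{F})=0$ for $q\geq 1$ while $H_0(K;\mathbb{F})=\mathbb{F}$.

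Next I would identify the total mass in question with $\dim_{\mathbb{F}}\mathcal{Z}_q(K)$. By the definition of the verbose diagram, $D_{\mathrm{Ver}, q}(\mathcal{C}(\K^\kappa(X)))$ has exactly $m$ points counted with multiplicity, where $(x_1,\dots,x_m)$ is the orthogonal basis of the codomain $\mathrm{Ker}\,\partial_q$ provided by a singular value decomposition of $\partial_{q+1}$ in $\mathcal{C}(\K^\kappa(X))$; hence $\xi_{\mathrm{Ver}, q}^\kappa(X)(\Delta')=m=\dim_{\mathbb{F}}\mathrm{Ker}\,\partial_q=\dim_{\mathbb{F}}\mathcal{Z}_q(K)$. (Alternatively this follows from Corollary~\ref{cor: cycleboundarymeasure}\,(i) applied with $t=t_0$, since each diagram point has birth coordinate at most $t_0$ by Remark~\ref{rmk:meaning of ell} and Equation~\eqref{eq: filtfuncandbirthtime}, so its mass on $[0,t_0]\times[0,\infty]$ equals its mass on all of $\Delta'$.)

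It then remains to compute $\dim_{\mathbb{F}}\mathcal{Z}_q(K)$. For $q=0$ we have $\partial_0=0$, so $\mathcal{Z}_0(K)=C_0(K)$ has dimension $n=|X|$. For $q\geq 1$, rank--nullity gives $\dim_{\mathbb{F}}\mathcal{Z}_q(K)=\binom{n}{q+1}-\mathrm{rank}\,\partial_q$; here $\mathrm{rank}\,\partial_1=\dim_{\mathbb{F}}\mathcal{B}_0(K)=n-\dim_{\mathbb{F}}H_0(K)=n-1$, and for $q\geq 2$, $\mathrm{rank}\,\partial_q=\dim_{\mathbb{F}}\mathcal{B}_{q-1}(K)=\dim_{\mathbb{F}}\mathcal{Z}_{q-1}(K)$ because $H_{q-1}(K;\mathbb{F})=0$. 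Substituting, $\dim_{\mathbb{F}}\mathcal{Z}_1(K)=\binom{n}{2}-(n-1)=\binom{n-1}{2}$, and $\dim_{\mathbb{F}}\mathcal{Z}_q(K)=\binom{n}{q+1}-\dim_{\mathbb{F}}\mathcal{Z}_{q-1}(K)$ for $q\geq 2$; an induction on $q$ using Pascal's rule $\binom{n}{q+1}=\binom{n-1}{q+1}+\binom{n-1}{q}$ then gives $\dim_{\mathbb{F}}\mathcal{Z}_q(K)=\binom{n-1}{q+1}$ for all $q\geq 1$. Since $\binom{n-1}{q+1}>0$ exactly for $1\leq q\leq n-2$ and equals $0$ for $q\geq n-1$, this is precisely the asserted case distinction.

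No step is deep; the argument is essentially bookkeeping. The only points that need care are the two reductions above: confirming that finiteness of $\kappa$ forces the final complex to be the full simplex (so that one is computing the cycles of a contractible complex), and that the verbose-diagram measure on $\Delta'$ has total mass exactly $\dim_{\mathbb{F}}\mathrm{Ker}\,\partial_q$ --- in particular that no diagram point is ``lost'' near the corner $(\infty,\infty)$. Both are immediate from the definitions, after which the combinatorial identity is routine.
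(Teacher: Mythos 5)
Your argument is correct: finiteness of $\kappa$ makes the final complex the full simplex on $|X|$ vertices, the total mass of the degree-$q$ verbose diagram equals the size $m$ of the codomain basis in a singular value decomposition of $\partial_{q+1}$, i.e.\ $\dim_{\mathbb{F}}\mathcal{Z}_q(K)$, and the rank--nullity/Pascal computation then yields exactly the stated case distinction. The paper itself omits the proof of Lemma~\ref{lem:verbose-cardinality}, deferring to the Vietoris--Rips computation in \cite[Example~3.12]{mémolizhou2024}, and your write-up supplies essentially that same argument in full generality, so it matches the intended proof.
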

This lemma is a direction extension of  \cite[Example~3.12]{mémolizhou2024}), which only concerns the Vietoris-Rips filtration. The proof is almost the same, and thus we omit it.

\begin{proof}[Proof of Theorem~\ref{thm: limitmeasuretotalmass}]
   Since the limiting measure $\nu_q'$ in Theorem \ref{thm: vagueconvverbose} does not depend on the choice of the convex averaging sequence $\mathcal{L}$, we will assume \(\mathcal{L}=\{\Lambda_n = [-n/2, n/2)^N\}_n\).

\paragraph{Case of $q=0$.}
    For \(q = 0\), we first give a simpler proof assuming \(\M = \{*\}\), i.e. the unmarked setting.
    Suppose \(\M = \{*\}\). Then, by the translation invariance of \(\kappa\), all zero-simplices have the same \(\kappa\)-value, say  
     \(\alpha\).  
    Then for each \(n \geq 1\), \(\xi_{\mathrm{Ver}, 0, \Lambda_n}^\kappa\) is supported on a subset of \(A := \{\alpha\} \times [\alpha, \infty]\), and thus so is \(\mathbb{E}[\xi_{\mathrm{Ver}, 0, \Lambda_n}^\kappa]\).

    Note that \(A\) is compact
    because the image of $A$ via the homeomorphism \(h\) in the proof of Lemma~\ref{lem: topondelta'} is compact.
    Let \(B\) be any open neighborhood of \(A\) with compact closure.
    Since the support of  \(\xi_{\mathrm{Ver}, 0, \Lambda_n}^\kappa\) is in $A$,   
    we have \(\xi_{\mathrm{Ver}, 0, \Lambda_n}^\kappa(A) = \xi_{\mathrm{Ver}, 0, \Lambda_n}^\kappa(B)\), and
    by Lemma~\ref{lem:verbose-cardinality},
    their common value equals the number of points in \(\Phi_{\Lambda_n}\), which is identical with the value \(\Phi(\Lambda_n)\).
    Hence, we have
    \[
        \mathbb{E}[\xi_{\mathrm{Ver}, 0, \Lambda_n}^\kappa](A) = \mathbb{E}[\xi_{\mathrm{Ver}, 0, \Lambda_n}^\kappa](B) = \mathbb{E}[\Phi(\Lambda_n)].
    \]
    Since \(\Phi\) is stationary,
    the linearity of expectation implies that   
    \begin{equation}\label{eq:linearity}
        \mathbb{E}[\Phi(\Lambda_n)] = \lambda n^N
    \end{equation}    
     and thus
    \[
        \frac{\mathbb{E}[\Phi(\Lambda_n)]}{\vol{\Lambda_n}} = \frac{\mathbb{E}[\Phi(\Lambda_n)]}{n^N} = \lambda.
    \]
    By Theorem~\ref{thm: vagueconvverbose}, we have
    \begin{equation}\label{eq:vague convergence}
        \frac{1}{\vol{\Lambda_n}} \mathbb{E}[\xi_{\mathrm{Ver}, 0, \Lambda_n}^\kappa] \xrightarrow{v} \nu_0'.
    \end{equation}
    Then, by Lemma~\ref{lem: vagueconvequiv}, we have that
    \[
         \lambda = \limsup_{n \to \infty} \frac{\mathbb{E}[\Phi(\Lambda_n)]}{\vol{\Lambda_n}} \leq \nu_0'(A) \quad \text{and} \quad
        \nu_0'(B) \leq \liminf_{n \to \infty} \frac{\mathbb{E}[\Phi(\Lambda_n)]}{\vol{\Lambda_n}} = \lambda.
    \]
    Therefore, \(\nu_0'(A) = \nu_0'(B) = \lambda\). Note that $\Delta'$ is a union of an increasing sequence of open neighborhoods of $A$ with compact closures. Hence, by continuity  from below of measures, $\nu_0'(\Delta')=\lambda$.
    
    Next, we present a proof \emph{without} assuming that $\M=\{\ast\}$. 
    Now, zero-simplices can have different birth-times due to  marks.
    For any \(a \in \R_{>0}\), let
    \begin{equation} \label{eq: AaandBa}
            A_{a} := \{(x, y) \in \Delta': x \leq a\} \mbox{ and }
            B_{a} := \{(x, y) \in \Delta': x < a\}.
    \end{equation}
    Since \(\bigcup_{a = 1}^\infty A_a = \Delta'\) and \(\bigcup_{a = 1}^\infty B_a = \Delta'\),
    the continuity from below of measures implies
    \begin{equation} \label{eq: totalmasszero}
        \nu_0'(\Delta') = \sup_{a = 1, 2, \dots} \nu_0'(A_a) = \sup_{a = 1, 2, \dots} \nu_0'(B_a).
    \end{equation}

    First, we show that \(\nu_0'(\Delta') \geq \lambda\).
    Fix \(a > 0\).
    Since \(A_a\) is compact, Lemma~\ref{lem: vagueconvequiv} and the vague convergence in Equation~\eqref{eq:vague convergence} imply that 
    \begin{equation} \label{eq: limsup}
         \limsup_{n \to \infty} \frac{1}{n^N}\mathbb{E}[\xi_{\mathrm{Ver}, 0, \Lambda_n}^\kappa(A_a)] \leq \nu_0'(A_a).
    \end{equation}
    Also, Corollary~\ref{cor: cycleboundarymeasure} \ref{item:cycleboundary1} implies that, for any \(n\in \mathbb{Z}_{>0}\),
    \[
        \xi_{\mathrm{Ver}, 0, \Lambda_n}^\kappa(A_a) = \dim \mathcal{Z}_0(K^\kappa(\Phi_{\Lambda_n}, a)) \quad \text{(cf. Equation~\eqref{eq: sublevel}).}
    \]
    Note that the vertex set of the simplicial complex $K^\kappa(\Phi_{\Lambda_n}, a)$ is $\{p \in \Phi_{\Lambda_n}: \kappa(\{p\}) \leq a\}$, and thus $\{\{p\} : p \in \Phi_{\Lambda_n} \mbox{ and } \kappa(\{p\}) \leq a\}$ is a basis for the $0$-chain group of $K^\kappa(\Phi_{\Lambda_n}, a)$. Since every \(0\)-chain  is a cycle, we have
    \[
        \dim \mathcal{Z}_0(K^\kappa(\Phi_{\Lambda_n}, a)) = |\{p \in \Phi_{\Lambda_n}: \kappa(\{p\}) \leq a\}|.
    \]
By the translation invariance of \(\kappa\) (Definition~\ref{def:filtration function} (K2')),  the function \(\widetilde{\kappa}: \M \to [0, \infty)\) given by \(
        \widetilde{\kappa}(m) := \kappa(\{(x, m)\})  \ \ 
        \mbox{for any \(m \in \M\) and any \(x \in \R^N\)}       
    \)  is well-defined. Then, we have
    \[
        |\{p \in \Phi_{\Lambda_n}: \kappa(\{p\}) \leq a\}| = \Phi(\Lambda_n \times \widetilde{\kappa}^{-1}([0, a])).
    \]
    The last three equations imply
    \begin{equation} \label{eq: Aaandkappatilde}
        \xi_{\mathrm{Ver}, 0, \Lambda_n}^\kappa(A_a) = \Phi(\Lambda_n \times \widetilde{\kappa}^{-1}([0, a])).
    \end{equation}
    Taking expectation on both sides, the translation invariance of \(\Phi\) gives
    \[
        \mathbb{E}[\xi_{\mathrm{Ver}, 0, \Lambda_n}^\kappa(A_a)] = \mathbb{E}[\Phi](\Lambda_n \times \widetilde{\kappa}^{-1}([0, a])) = n^N \mathbb{E}[\Phi](\Lambda_1 \times \widetilde{\kappa}^{-1}([0, a])),
    \]
    and thus
    \[
        \frac{1}{n^N}\mathbb{E}[\xi_{\mathrm{Ver}, 0, \Lambda_n}^\kappa(A_a)] = \mathbb{E}[\Phi](\Lambda_1 \times \widetilde{\kappa}^{-1}([0, a])).
    \]
    Since the right-hand side is independent of \(n\),
    Equation~\eqref{eq: limsup} implies that
    \[
          \mathbb{E}[\Phi](\Lambda_1 \times \widetilde{\kappa}^{-1}([0, a])) \leq \nu_0'(A_a).
    \]
    Since this holds for any \(a > 0\), Equation~\eqref{eq: totalmasszero} and the continuity from below of measures give
    \begin{align*}
        \nu_0'(\Delta') &= \sup_{a = 1, 2, \dots} \nu_0'(A_a) \\
        &\geq \sup_{a = 1, 2, \dots} \mathbb{E}[\Phi](\Lambda_1 \times \widetilde{\kappa}^{-1}([0, a])) \\
        &= \mathbb{E}[\Phi](\Lambda_1 \times \M) \\
        &= \lambda 
    \end{align*}
    as desired.
    
    Next, we show that \(\nu_0'(\Delta') \leq \lambda\).
    Again, fix \(a > 0\).
    Note that \(B_a\) is relatively compact since its closure is the compact set $A_a$.
    Thus, Lemma~\ref{lem: vagueconvequiv} and Theorem~\ref{thm: vagueconvverbose} imply that
    \begin{equation} \label{eq: liminf}
        \nu_0'(B_a) \leq \liminf_{n \to \infty} \frac{1}{n^N}\mathbb{E}[\xi_{\mathrm{Ver}, 0, \Lambda_n}^\kappa(B_a)].
    \end{equation}
    Let \(M > 0\) be an integer with \(a > 1/M\). Then, we have \(\bigcup_{i = M}^\infty A_{a - 1/i} = B_a\).
    Hence, by Equation~\eqref{eq: Aaandkappatilde} and the continuity from below of measures, we have
    \begin{equation} \label{eq: Baandkappatilde}
        \begin{split}
            \xi_{\mathrm{Ver}, 0, \Lambda_n}^\kappa(B_a) &= \sup_{i= M, M+1, \dots} \xi_{\mathrm{Ver}, 0, \Lambda_n}^\kappa(A_{a - 1/i})\\
            &= \sup_{i= M, M+1, \dots} \Phi(\Lambda_n \times \widetilde{\kappa}^{-1}([0, a-1/i])) \\
            &= \Phi(\Lambda_n \times \widetilde{\kappa}^{-1}([0, a))).
        \end{split}
    \end{equation}
    After taking the expectation on both sides of the equality $\xi_{\mathrm{Ver}, 0, \Lambda_n}^\kappa(B_a)=\Phi(\Lambda_n \times \widetilde{\kappa}^{-1}([0, a)))$, a similar argument as after Equation~\eqref{eq: Aaandkappatilde} yields 
     \[
        \frac{1}{n^N}\mathbb{E}[\xi_{\mathrm{Ver}, 0, \Lambda_n}^\kappa(B_a)] = \mathbb{E}[\Phi](\Lambda_1 \times \widetilde{\kappa}^{-1}([0, a)))
    \]
    and in turn
    \[
        \nu_0'(B_a) \leq \mathbb{E}[\Phi](\Lambda_1 \times \widetilde{\kappa}^{-1}([0, a))).
    \]
    Since this holds for any \(a > 0\), Equation~\eqref{eq: totalmasszero} and the continuity from below of measures give
    \begin{align*}
        \nu_0'(\Delta') &= \sup_{a = 1, 2, \dots} \nu_0'(B_a) \\
        &\leq \sup_{a = 1, 2, \dots} \mathbb{E}[\Phi](\Lambda_1 \times \widetilde{\kappa}^{-1}([0, a))) \\
        &= \mathbb{E}[\Phi](\Lambda_1 \times \M) \\
        &= \lambda
    \end{align*}
    as desired.
    This completes the proof for \(q = 0\).

\paragraph{Case of $q\geq 1$.}
We wish to show that $\nu_q'(\Delta')=\infty$.  To this end, we will utilize the following claims, which will be proved at the end of the proof.    

\noindent\textit{Claim 1.} If the intensity \(\lambda\) is finite, then    \(
        \frac{1}{n^N} \mathbb{E}[\xi_{\mathrm{Ver}, q, \Lambda_n}^\kappa](\Delta') \to \infty \quad \text{as } n \to \infty.
    \)

\noindent\textit{Claim 2.} If the intensity \(\lambda\) is infinite, then for each $n\geq 1$, \(
        \frac{1}{n^N} \mathbb{E}[\xi_{\mathrm{Ver}, q, \Lambda_n}^\kappa](\Delta') 
    =\infty\). 

    Fix \(M > 0\).  
    For each \(n \geq 1\), let \(\mu_n := \frac{1}{n^N} \mathbb{E}[\xi_{\mathrm{Ver}, q, \Lambda_n}^\kappa]\).  
    Whether \(\lambda\) is finite or infinite, by the claims there exists \(n_0 \geq 1\) such that \(\mu_{n_0}(\Delta') > M\).
    Recall the set \(A_a\) from Equation~\eqref{eq: AaandBa}  and that \(\bigcup_{a = 1}^\infty A_a = \Delta'\).
    Using the continuity from below of measures, we conclude that there exists \(x_0 > 0\) such that
    \[
        \mu_{n_0}(A_{x_0}) > M.
    \]
    Define \(n_k := 2^k n_0\) for \(k \geq 0\).
    The sequence \(\{\mu_{n_k}(A_{x_0})\}_k\) is non-decreasing, as will be shown later.
    Hence, the compactness of \(A_{x_0}\),
    Lemma~\ref{lem: vagueconvequiv}, and Theorem~\ref{thm: vagueconvverbose} imply that
    \[
         M < \lim_{k \to \infty}\mu_{n_k}(A_{x_0}) \leq \limsup_{n \to \infty} \mu_n(A_{x_0}) \leq \nu_q'(A_{x_0}) \leq \nu_q'(\Delta'). 
    \]
    Since \(M > 0\) was arbitrary, it follows that \(\nu_q'(\Delta') = \infty\).

    It remains to show that the sequence \(\{\mu_{n_k}(A_{x_0})\}_k\) is non-decreasing.  
    Note that  by Corollary~\ref{cor: cycleboundarymeasure}~\ref{item:cycleboundary1}, for each \(n \geq 1\),  \(\xi_{\mathrm{Ver}, q, \Lambda_n}^\kappa(A_{x_0}) = \dim \mathcal{Z}_q(K^\kappa(\Phi_{\Lambda_n}, x_0))\).
    Observe that the box \(\Lambda_{n_1}\) is partitioned into \(2^N\) boxes congruent to \(\Lambda_{n_0}\).  
    Denote these boxes by \(Q_1, \dots, Q_{2^N}\).  
    For \(i=1,\ldots,2^N\), let \(V_i := \mathcal{Z}_q(K^\kappa(\Phi_{Q_i}, x_0))\).
    Since the boxes \(Q_1, \dots, Q_{2^N}\) are disjoint, the subspaces \(V_1, \dots, V_{2^N}\) mutually trivially intersect, and their direct sum
    is a subspace of \(\mathcal{Z}_q(K^\kappa(\Phi_{\Lambda_{n_1}}, x_0))\).  
    Therefore,
    \[
        \sum_{i = 1}^{2^N} \dim V_i \leq \dim \mathcal{Z}_q(K^\kappa(\Phi_{\Lambda_{n_1}}, x_0)).
    \]
    Taking expectations and applying translation invariance along with Corollary~\ref{cor: cycleboundarymeasure}~\ref{item:cycleboundary1}, we obtain
    \[
        2^N \mathbb{E}[\xi_{\mathrm{Ver}, q, \Lambda_{n_0}}^\kappa](A_{x_0}) \leq \mathbb{E}[\xi_{\mathrm{Ver}, q, \Lambda_{n_1}}^\kappa](A_{x_0}),
    \]
    so that \(\mu_{n_0}(A_{x_0}) \leq \mu_{n_1}(A_{x_0})\).
    The same argument shows that \(\mu_{n_i}(A_{x_0}) \leq \mu_{n_{i+1}}(A_{x_0})\) for all \(i \geq 0\).       
   
Now, we prove \textit{Claim 1}. Consider the function \(f : [0, \infty) \to [0, \infty)\) defined by
    \[
        f(x) =
        \begin{cases}
            \dfrac{(x - 1)(x - 2) \cdots (x - q - 1)}{(q + 1)!}, &\quad \text{if } x > q + 1, \\
            0, &\quad \text{otherwise}.
        \end{cases}
    \]
    Then, Lemma~\ref{lem:verbose-cardinality} implies that, for any simple marked point set \(X\),
    \begin{equation} \label{eq: number of points via f}
        \xi_{\mathrm{Ver}, q}^\kappa(X)(\Delta') = f(|X|).
    \end{equation}

    It is not difficult to see that \(f\) is a convex function.
    We claim that \(\mathbb{E}[f(\Phi(\Lambda_n))] \geq f(\mathbb{E}[\Phi(\Lambda_n)])\) for each \(n \geq 1\):
    If \(\mathbb{E}[f(\Phi(\Lambda_n))] = \infty\), then the claim holds immediately.  
    Assume \(\mathbb{E}[f(\Phi(\Lambda_n))] < \infty\). Since Equation \eqref{eq:linearity} implies that \(\mathbb{E}[\Phi(\Lambda_n)] < \infty\), the claim holds by Jensen's inequality.
    Equation~\eqref{eq: number of points via f},
    the claim, and Equation \eqref{eq:linearity} imply that, for each \(n \geq 1\),   \[
        \mathbb{E}[\xi_{\mathrm{Ver}, q, \Lambda_n}^\kappa](\Delta') = \mathbb{E}[f(\Phi(\Lambda_n))] 
        \geq f(\mathbb{E}[\Phi(\Lambda_n)]) = f(\lambda n^N).
    \]
    Hence, since \(\lambda > 0\),
    \begin{align*}
        \frac{1}{n^N} \mathbb{E}[\xi_{\mathrm{Ver}, q, \Lambda_n}^\kappa](\Delta') &\geq \frac{1}{n^N} f(\lambda n^N) \\
        &= \lambda^{q+1} (n^N)^q \cdot \frac{f(\lambda n^N)}{(\lambda n^N)^{q+1}} \\
        &\to \infty \quad \text{as } n \to \infty.
    \end{align*}

    Next, we prove \textit{Claim 2.} 
    Observe that \(f(x) > x\) for all \(x > t\), for some sufficiently large constant \(t > 0\).  
    Then, by Equation~\eqref{eq: number of points via f}, for each \(n \geq 1\), we have
    \begin{align*}
        \mathbb{E}[\xi_{\mathrm{Ver}, q, \Lambda_n}^\kappa](\Delta') 
        &= \mathbb{E}[f(\Phi(\Lambda_n))] \\
        &\geq \mathbb{E}[f(\Phi(\Lambda_n)) \mathbbm{1}_{\{\Phi(\Lambda_n) > t\}}] \\
        &\geq \mathbb{E}[\Phi(\Lambda_n) \mathbbm{1}_{\{\Phi(\Lambda_n) > t\}}].
    \end{align*}
    Note that \(\mathbb{E}[\Phi(\Lambda_n)] = \lambda n^N = \infty\).  
    Since
    \begin{align*}
        \mathbb{E}[\Phi(\Lambda_n)] 
        &= \mathbb{E}[\Phi(\Lambda_n) \mathbbm{1}_{\{\Phi(\Lambda_n) > t\}}] + \mathbb{E}[\Phi(\Lambda_n) \mathbbm{1}_{\{\Phi(\Lambda_n) \leq t\}}] \\
        &\leq \mathbb{E}[\Phi(\Lambda_n) \mathbbm{1}_{\{\Phi(\Lambda_n) > t\}}] + \mathbb{E}[t \mathbbm{1}_{\{\Phi(\Lambda_n) \leq t\}}] \\
        &\leq \mathbb{E}[\Phi(\Lambda_n) \mathbbm{1}_{\{\Phi(\Lambda_n) > t\}}] + t,
    \end{align*}
     \(\mathbb{E}[\Phi(\Lambda_n) \mathbbm{1}_{\{\Phi(\Lambda_n) > t\}}]\) is infinite.
    Therefore, \(\mathbb{E}[\xi_{\mathrm{Ver}, q, \Lambda_n}^\kappa](\Delta') = \infty\), and hence
    \[
        \frac{1}{n^N} \mathbb{E}[\xi_{\mathrm{Ver}, q, \Lambda_n}^\kappa](\Delta') = \infty.
    \]
\end{proof}

Next, we prove Proposition~\ref{prop: persbettimeasure}, Theorem~\ref{thm: persbettiSLLNextended}, and Theorem~\ref{thm: persbettiCLTextended}. 

\begin{proof}[Proof of Proposition~\ref{prop: persbettimeasure}]
    By Proposition~\ref{prop: cycleboundary}, we have
    \begin{align*}
        \dim \mathcal{Z}_q(K_r) &= \xi_{\mathrm{Ver}, q}(\mathcal{C}(\K))([0, r] \times [0, \infty]), \\
        \dim \left(\mathcal{Z}_q(K_r) \cap \mathcal{B}_q(K_s)\right) &= \xi_{\mathrm{Ver}, q}(\mathcal{C}(\K))([0, r] \times [0, s]).
    \end{align*}
    Therefore, we have
    \begin{align*}
        \beta_q^{r, s}(\K) 
        &= \dim \frac{\mathcal{Z}_q(K_r)}{\mathcal{Z}_q(K_r) \cap \mathcal{B}_q(K_s)} \\
        &= \dim \mathcal{Z}_q(K_r) - \dim \left(\mathcal{Z}_q(K_r) \cap \mathcal{B}_q(K_s)\right) \\
        &= \xi_{\mathrm{Ver}, q}(\mathcal{C}(\K))([0, r] \times [0, \infty]) 
           - \xi_{\mathrm{Ver}, q}(\mathcal{C}(\K))([0, r] \times [0, s]) \\
        &= \xi_{\mathrm{Ver}, q}(\mathcal{C}(\K))([0, r] \times (s, \infty]).
    \end{align*}
\end{proof}

\begin{proof}[Proof of Theorem~\ref{thm: persbettiSLLNextended}]
    By Theorem~\ref{thm: persbettiSLLN}, it suffices to only consider the case \(r > s\).
    Set \(t := r - s\).
    Then, 
    we have
    \begin{equation} \label{eq: bettishift}
        \begin{aligned}
            \beta_{q, \Lambda_n}^{\kappa, r, s} 
            &= \beta_q^{r, s}(\K^\kappa(\Phi_{\Lambda_n}))&\mbox{by definition} \\
            &= \xi_{\mathrm{Ver}, q}(\mathcal{C}(\K^\kappa(\Phi_{\Lambda_n})))([0, r] \times (s, \infty]) & \text{by Proposition~\ref{prop: persbettimeasure}}\\
            &= \xi_{\mathrm{Ver}, q, \Lambda_n}^\kappa([0, r] \times (s, \infty])&\mbox{by definition} \\
            &= \left(\xi_{\mathrm{Ver}, q, \Lambda_n}^{\kappa^{q, t}}\right)^{((0, t))}([0, r] \times (s, \infty])  & \text{by \Cref{eq: shiftofverbosediagram}}\\
            &= \xi_{\mathrm{Ver}, q, \Lambda_n}^{\kappa^{q, t}}([0, r] \times (r, \infty])&\mbox{by definition} \\
            &= \xi_{\mathrm{Ver}, q}(\mathcal{C}(\K^\kappa(\Phi_{\Lambda_n})))([0, r] \times (r, \infty])&\mbox{by definition}\\
            &= \beta_q^{r, r}(\K^{\kappa^{q, t}}(\Phi_{\Lambda_n})) & \text{by Proposition~\ref{prop: persbettimeasure}}\\
            &= \beta_{q, \Lambda_n}^{\kappa^{q, t}, r, r}&\mbox{by definition}.
        \end{aligned}
    \end{equation}
    By Theorem~\ref{thm: persbettiSLLN} and Remark~\ref{rmk: shirai}, $\beta_{q,\Lambda_n}^{\kappa^{q, t}, r,r}\rightarrow \widehat{\beta}^{\kappa^{q, t}, r,r}_q$ as $n\rightarrow \infty$. This proves  the convergence of $\beta_{q, \Lambda_n}^{\kappa, r, s}$ to \(\widehat{\beta}^{\kappa, r,s}_q := \widehat{\beta}^{\kappa^{q, t}, r,r}_q\) as $n\rightarrow \infty$.
\end{proof}

\begin{proof}[Proof of Theorem~\ref{thm: persbettiCLTextended}] 
    By Theorem~\ref{thm: persbettiCLT}, it suffices to only consider the case \(r > s\).
    Set \(t := r - s\).
    Again by Theorem~\ref{thm: persbettiCLT}, we have
    \[
        \frac{\beta_{q, \Lambda_n}^{\kappa^{q, t}, r, r} - \mathbb{E}[\beta_{q, \Lambda_n}^{\kappa^{q, t}, r, r}]}{n^{N/2}} \xrightarrow{d} \mathcal{N}(0, \sigma_{r, r}^2(\kappa^{q, t}, q))\quad \text{as } n \to \infty.
    \]
    Since $\beta_{q, \Lambda_n}^{\kappa, r, s}=\beta_{q, \Lambda_n}^{\kappa^{q, t}, r, r}$ from the two ends of Equation~\eqref{eq: bettishift}, by setting \(\sigma_{r, s}^2(\kappa, q) := \sigma_{r, r}^2(\kappa^{q, t}, q)\), the above convergence implies the convergence in Equation~\eqref{eq: persbettiCLTextended}.
\end{proof}

Our next goal is to prove Theorem~\ref{thm: measuresupportverbose}. Our proof of Theorem~\ref{thm: measuresupportverbose} follows a line of reasoning analogous to that of Theorem~\ref{thm: measuresupport}, as presented in~\cite{hiraokaetal2018}.
Namely, \cite[Lemma~4.4]{hiraokaetal2018} plays a central role in the proof of Theorem~\ref{thm: measuresupport}, which shows that if \(\kappa\) on \(\mathscr{F}(\R^N)\) is Lipschitz continuous with respect to the Hausdorff distance, then the bottleneck distance between the persistence diagrams of the corresponding \(\kappa\)-filtrations is also Lipschitz continuous with respect to the Hausdorff distance.

The following lemma, proved using Theorem~\ref{thm: isometry}, parallels  \cite[Lemma~4.4]{hiraokaetal2018}.

\begin{lemma} \label{lem: matchingdistancecontinuity}
   Let \(\kappa\) be a filtration function  for unmarked point sets, which is $c_\kappa$-Lipschitz continuous with respect to the Hausdorff distance for some $c_\kappa>0$. 
    Then, for any \(\Xi \in \mathscr{F}(\R^N)\), there exists \(\delta = \delta_\Xi > 0\) such that for any \(q \geq 0\) and any \(\Xi' \in \mathscr{F}(\R^N)\) with \(|\Xi| = |\Xi'|\) and \(d_H(\Xi, \Xi') < \delta\), we have
    \begin{equation}\label{eq:dM leq cdH}
        d_M(D_{\mathrm{Ver}, q}^\kappa(\Xi), D_{\mathrm{Ver}, q}^\kappa(\Xi')) \leq c_\kappa d_H(\Xi, \Xi').
    \end{equation}
\end{lemma}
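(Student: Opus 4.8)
The plan is to deduce the bound from the isometry theorem (Theorem~\ref{thm: isometry}) once an explicit interleaving of filtered chain complexes is in hand. Writing $\mathcal{C} := \mathcal{C}(\K^\kappa(\Xi)) = (C_*,\partial,\ell_C)$ and $\mathcal{D} := \mathcal{C}(\K^\kappa(\Xi')) = (D_*,\partial,\ell_D)$, Theorem~\ref{thm: isometry} gives $\sup_{q \ge 0} d_M(D_{\mathrm{Ver},q}^\kappa(\Xi), D_{\mathrm{Ver},q}^\kappa(\Xi')) = d_I(\mathcal{C},\mathcal{D})$, so it suffices to produce, for $\Xi'$ close enough to $\Xi$, a $\delta$-interleaving between $\mathcal{C}$ and $\mathcal{D}$ with $\delta = c_\kappa\, d_H(\Xi,\Xi')$; then \eqref{eq:dM leq cdH} holds simultaneously for all $q$. (If $|\Xi| = 1$ then $\delta_\Xi$ may be taken arbitrary and \eqref{eq:dM leq cdH} is immediate from Lipschitz continuity, so assume $|\Xi| \ge 2$; also note that the Lipschitz hypothesis forces $\kappa$ to be finite-valued unless $\kappa \equiv \infty$, the latter case being vacuous since then all the chain complexes involved are zero.)

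First I would fix the threshold $\delta_\Xi := \tfrac13\min_{x \ne y \in \Xi}\|x-y\| > 0$. Let $\Xi' \in \mathscr{F}(\R^N)$ with $|\Xi'| = |\Xi|$ and $\varepsilon := d_H(\Xi,\Xi') < \delta_\Xi$. Assigning to each $x \in \Xi$ a point $\phi(x) \in \Xi'$ nearest to $x$ gives $\|x - \phi(x)\| \le \varepsilon$ for every $x$; if $\phi(x_1) = \phi(x_2)$ with $x_1 \ne x_2$ then $\|x_1 - x_2\| \le 2\varepsilon < 2\delta_\Xi < \min_{x\ne y}\|x-y\|$, a contradiction, so $\phi$ is injective and hence (as $|\Xi| = |\Xi'| < \infty$) a bijection $\phi\colon\Xi\to\Xi'$; in particular $\|x' - \phi^{-1}(x')\| \le \varepsilon$ for every $x' \in \Xi'$ as well. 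Since $\kappa$ is finite-valued, the underlying complexes of $\mathcal{C}$ and $\mathcal{D}$ are the full simplicial complexes on $\Xi$ and $\Xi'$, so $\phi$ extends to a simplicial isomorphism and induces a chain isomorphism $\mathcal{F}_*\colon C_* \to D_*$, with inverse $\mathcal{G}_* := \mathcal{F}_*^{-1}$ induced by $\phi^{-1}$.

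Next I would check the interleaving axioms of Definition~\ref{def: interleavingdist} for $(\mathcal{F}_*,\mathcal{G}_*)$ with $\delta := c_\kappa\varepsilon$. For any nonempty simplex $\sigma \subseteq \Xi$ one has $d_H(\sigma,\phi(\sigma)) \le \max_{v\in\sigma}\|v-\phi(v)\| \le \varepsilon$, so Lipschitz continuity of $\kappa$ gives $\kappa(\phi(\sigma)) \le \kappa(\sigma) + \delta$; since $\mathcal{F}_*(\sum_i\lambda_i\sigma_i) = \sum_i\lambda_i\phi(\sigma_i)$, taking the maximum over the simplices with $\lambda_i \ne 0$ and using \eqref{eq: vectfilt} together with \eqref{eq: filtfuncandbirthtime} yields $\ell_D\circ\mathcal{F}_* \le \ell_C + \delta$, and symmetrically $\ell_C\circ\mathcal{G}_* \le \ell_D + \delta$. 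In particular $\mathcal{F}_*$ maps $C_*^\lambda$ into $D_*^{\lambda+\delta}$ and $\mathcal{G}_*$ maps $D_*^{\lambda+\delta}$ into $C_*^{\lambda+2\delta}$ for every $\lambda$; because $\mathcal{G}_*\circ\mathcal{F}_* = \mathrm{id}_{C_*}$ and $\mathcal{F}_*\circ\mathcal{G}_* = \mathrm{id}_{D_*}$, the two composites in Definition~\ref{def: interleavingdist} are precisely the inclusion maps. Thus $(\mathcal{F}_*,\mathcal{G}_*)$ is a $\delta$-interleaving, so $d_I(\mathcal{C},\mathcal{D}) \le \delta = c_\kappa\, d_H(\Xi,\Xi')$, and Theorem~\ref{thm: isometry} delivers \eqref{eq:dM leq cdH} for every $q \ge 0$.

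I expect the main obstacle to be the passage from vertex-level geometric proximity of $\Xi$ and $\Xi'$ to a bona fide chain-level interleaving of the associated FCCs: one must (i) calibrate $\delta_\Xi$ so that the nearest-point correspondence is genuinely a bijection---this is exactly why the bound is asserted only for sufficiently small perturbations---and (ii) lift the Lipschitz estimate for $\kappa$, which controls individual simplices, to the vector filtration functions $\ell_C,\ell_D$ on arbitrary chains, where the max/orthogonality structure of \eqref{eq: vectfilt} is what makes the lift clean. Once the bijection $\phi$ and these two estimates are secured, verifying the interleaving axioms is routine, and the lemma follows from the isometry theorem in the same way the persistence-diagram analogue \cite[Lemma~4.4]{hiraokaetal2018} follows from bottleneck stability.
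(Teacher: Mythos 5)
Your proposal is correct and follows essentially the same route as the paper's proof: choose $\delta_\Xi$ proportional to the minimal interpoint distance of $\Xi$ so that the nearest-point correspondence $\Xi\to\Xi'$ is a bijection, let it induce mutually inverse chain maps between the two FCCs, use Lipschitz continuity of $\kappa$ to verify the $c_\kappa d_H(\Xi,\Xi')$-interleaving conditions, and conclude via the isometry theorem (Theorem~\ref{thm: isometry}). The only differences are cosmetic (the constant $1/3$ versus the paper's $1/2$, and your explicit remark that Lipschitz continuity forces $\kappa$ to be finite-valued, which the paper leaves implicit when asserting the underlying complexes are the full simplicial complexes).
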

\begin{proof}
    Let \(\delta := \frac{1}{2} \min \{\|x - x'\| : x, x' \in \Xi,\ x \neq x'\}\).
    Take any \(\Xi' \in \mathscr{F}(\R^N)\) such that \(|\Xi| = |\Xi'|\) and \(d_H(\Xi, \Xi') < \delta\). Then, by the definition of the Hausdorff distance, each point \(x \in \Xi\) has a unique point \(f(x) \in \Xi'\) within distance \(\delta\). Then, it is easy to see that, for any $\sigma\subset \Xi$, 
     \[
        d_H(\sigma, f(\sigma)) \leq d_H(\Xi, \Xi').
    \]
    
    Next, let $t:= c_\kappa d_H(\Xi, \Xi')$. By  Theorem~\ref{thm: isometry}, to prove the inequality in \Cref{eq:dM leq cdH}, 
    it suffices to show that the following FCCs are $t$-interleaved: \[\mathcal{C}(\K^\kappa(\Xi))=(C_*, \partial_C, \ell_C) \mbox{ and } \mathcal{C}(\K^\kappa(\Xi'))= (D_*, \partial_D, \ell_D).\]
    Note that \((C_*, \partial_C)\) and \((D_*, \partial_D)\) are standard simplicial chain complexes of the full simplicial complexes on the vertex sets $\Xi$ and $\Xi'$ respectively. Hence the map \(f\) induces a chain map \(\mathcal{F}_*: C_* \to D_*\) defined as
    \[
        \mathcal{F}_*\left( \sum a_i \sigma_i \right) = \sum a_i f(\sigma_i).
    \]
    
    By Lipschitz continuity of \(\kappa\), for any $\sigma\subset \Xi$, we have
    \begin{align*}
        \kappa(f(\sigma)) 
        &\leq |\kappa(f(\sigma)) - \kappa(\sigma)| + \kappa(\sigma) \\
        &\leq c_\kappa d_H(\sigma, f(\sigma)) + \kappa(\sigma) \\
        &\leq t + \kappa(\sigma),
    \end{align*}
    and hence \(\ell_D \circ \mathcal{F}_* \leq \ell_C + t\).
    Similarly, the inverse \(f^{-1}\) induces a chain map \(\mathcal{G}_*: D_* \to C_*\) with \(\ell_C \circ \mathcal{G}_* \leq \ell_D + t\).
Clearly, \(\mathcal{F}_*\) and \(\mathcal{G}_*\) are inverses of each other, and thus the compositions
    \[
        C_*^\lambda \xrightarrow{\mathcal{F}_*^\lambda} D_*^{\lambda + t} \xrightarrow{\mathcal{G}_*^{\lambda + t}} C_*^{\lambda + 2t}
        \quad \text{and} \quad
        D_*^\lambda \xrightarrow{\mathcal{G}_*^\lambda} C_*^{\lambda + t} \xrightarrow{\mathcal{F}_*^{\lambda + t}} D_*^{\lambda + 2t}
    \]
    are equal to the respective inclusion maps.
    Therefore, \(\mathcal{F}_*\) and \(\mathcal{G}_*\) form a \(t\)-interleaving.
\end{proof}

Next, we introduce the notion of a \emph{marker}, which, roughly speaking, is a finite point set in $\R^N$ that induces
 a specified point in 
a verbose diagram. This notion is a direct extension of  \cite[Definition~4.5]{hiraokaetal2018} in the sense that, by replacing $\Delta'$ and \(\xi_{\mathrm{Ver}, q}^\kappa\) with $\Delta$ and \(\xi_{q}\), respectively, in the definition below, we obtain precisely \cite[Definition~4.5]{hiraokaetal2018}.

\begin{definition}
    Let \(\Lambda\) be a bounded Borel set in \(\R^N\), and let \((b, d) \in \Delta'\).
    For \(q \geq 0\) and a filtration function \(\kappa\) for unmarked point sets, we say that a set \(\Xi \subset \mathscr{F}(\R^N)\)  is a \emph{\((b, d)\)-marker} in \(\Lambda\) (for the degree-\(q\) verbose diagram of \(\kappa\)) if:
    \begin{itemize}
        \item[(i)] \(\Xi \subset \Lambda\), and  
        \item[(ii)] for any \(X \in \mathscr{F}(\R^N)\),
        \[
            \xi_{\mathrm{Ver}, q}^\kappa((X \setminus \Lambda) \sqcup \Xi)(\{(b, d)\}) \geq \xi_{\mathrm{Ver}, q}^\kappa(X \setminus \Lambda)(\{(b, d)\}) + 1.
        \]
    \end{itemize}
    For a subset \(A \subset \Delta'\), we also say that \(\Xi\) is an \(A\)-\emph{marker in \(\Lambda\)} if there exists \((b, d) \in A\) such that \(\Xi\) is a \((b, d)\)-marker in \(\Lambda\).
\end{definition}

\begin{lemma} \label{lem: marker}
    Let \(\kappa: \mathscr{F}(\R^N) \to [0, \infty)\) be a filtration function, which is $c_\kappa$-Lipschitz continuous with respect to the Hausdorff distance for some $c_\kappa>0$.     
    Fix \(A \subset \{(b,d) \in \Delta' : d < \infty\}\), \(q \geq 0\),
    \(\Xi \in \mathscr{F}(\R^N)\), \(\varepsilon > 0\), and a convex averaging sequence \(\{\Lambda_n\}_n\).
    There exists a natural number $M$ satisfying the following property: 
    If \(\Xi' \in \mathscr{F}(\R^N)\) such that \(|\Xi| = |\Xi'|\) and \(d_H(\Xi,\Xi') < \varepsilon\) and
    there is a point in \(A\) which is \((\kappa,q)\)-realizable by \(\Xi'\), i.e.,
    \[
        D_{\mathrm{Ver},q}^\kappa(\Xi') \cap A \neq \varnothing,
    \]
    then, for all $n\geq M$, $\Xi'$ is an
    \(A\)-marker in \(\Lambda_n\) for the degree-\(q\) verbose diagram of \(\kappa\).
\end{lemma}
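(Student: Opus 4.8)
The plan is to follow the strategy of the persistence‑diagram analogue of this lemma in \cite{hiraokaetal2018}, replacing the bottleneck/fundamental‑lemma bookkeeping there by the verbose‑diagram machinery built above, in particular Proposition~\ref{prop: persbettimeasure} and Corollary~\ref{cor: cycleboundarymeasure}. First I fix notation: $\kappa_0:=\kappa(\{x\})$, which is independent of $x$ by (K2); $\Xi^\varepsilon:=\bigcup_{x\in\Xi}B(x,\varepsilon)$; and $D^*:=\kappa_0+c_\kappa\big(\mathrm{diam}(\Xi)+2\varepsilon\big)$. Applying the Lipschitz bound to a singleton face and the whole simplex shows that every simplex $\sigma$ with $\sigma\subset\Xi^\varepsilon$ satisfies $\kappa(\sigma)\le D^*$. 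Let $\rho$ be a regularity function for $\kappa$ as in (K3) and set $R:=\rho(D^*)+1$. Since $(\Xi^\varepsilon)^R:=\bigcup_{u\in\Xi^\varepsilon}B(u,R)$ is bounded and $\{\Lambda_n\}_n$ is a convex averaging sequence, I would choose $M$ so that $\Lambda_n\supseteq(\Xi^\varepsilon)^R$ for all $n\ge M$; this is the only place where the growth of the averaging sequence enters.

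Now fix $n\ge M$ and an admissible $\Xi'$. Since $d_H(\Xi,\Xi')<\varepsilon$ we have $\Xi'\subset\Xi^\varepsilon\subset(\Xi^\varepsilon)^R\subset\Lambda_n$, which is condition~(i) in the definition of an $A$‑marker. Pick $(b,d)\in D_{\mathrm{Ver},q}^\kappa(\Xi')\cap A$; then $d<\infty$, and since by the definition of the verbose diagram $d$ equals the birth‑time $\ellK{\K^\kappa(\Xi')}(y)$ of some $(q+1)$‑chain $y$ supported on $\Xi'\subset\Xi^\varepsilon$, we get $d\le D^*$; moreover $(b,d)$ has multiplicity at least $1$ in $D_{\mathrm{Ver},q}^\kappa(\Xi')$. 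It remains to verify condition~(ii): for every $X\in\mathscr{F}(\R^N)$, writing $Y:=X\setminus\Lambda_n$ (so $Y\cap\Lambda_n=\varnothing$, hence $Y\cap\Xi'=\varnothing$) and $Z:=Y\sqcup\Xi'$, we must show $\xi_{\mathrm{Ver},q}^\kappa(Z)(\{(b,d)\})\ge\xi_{\mathrm{Ver},q}^\kappa(Y)(\{(b,d)\})+1$.

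The core observation is that a simplex of $Z$ having one vertex $v\in\Xi'$ and one vertex $w\in Y$ cannot appear in $\K^\kappa(Z)$ before parameter $D^*$: since $w\notin\Lambda_n\supseteq(\Xi^\varepsilon)^R$ we have $\|v-w\|\ge R>\rho(D^*)$, so by (K3) $\rho(\kappa(\{v,w\}))\ge\|v-w\|>\rho(D^*)$, forcing $\kappa(\{v,w\})>D^*$ ($\rho$ nondecreasing) and then $\kappa(\sigma)\ge\kappa(\{v,w\})>D^*$ by monotonicity (K1). Hence for every $t\le D^*$ the complex $K^\kappa(Z,t)$ is the disjoint union $K^\kappa(Y,t)\sqcup K^\kappa(\Xi',t)$, so in degree $q$ its chain, cycle and boundary groups split as direct sums of the corresponding groups of $Y$ and of $\Xi'$; as subspaces respecting a common direct‑sum decomposition intersect componentwise, Equation~\eqref{eq: persbetti} yields
\[
    \beta_q^{r,s}(\K^\kappa(Z))=\beta_q^{r,s}(\K^\kappa(Y))+\beta_q^{r,s}(\K^\kappa(\Xi'))\qquad\text{for all }r,s\in[0,D^*].
\]
Finally, for any filtration of a finite complex and any $(b,d)$ with $d<\infty$, the multiplicity of $(b,d)$ in the degree‑$q$ verbose diagram is, by Proposition~\ref{prop: persbettimeasure} (and, in the corner case $d=0$, by Corollary~\ref{cor: cycleboundarymeasure}), a fixed alternating sum $\beta_q^{b,d-\epsilon}-\beta_q^{b-\epsilon,d-\epsilon}-\beta_q^{b,d}+\beta_q^{b-\epsilon,d}$ of values at parameters $r,s\le d$ once $\epsilon>0$ is small enough for the (finite) diagram (dropping the $b-\epsilon$ terms if $b=0$). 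Choosing one $\epsilon$ small enough for all three of $D_{\mathrm{Ver},q}^\kappa(Z),D_{\mathrm{Ver},q}^\kappa(Y),D_{\mathrm{Ver},q}^\kappa(\Xi')$ and applying the displayed additivity term by term (valid since $b,d\le D^*$) gives $\mathrm{mult}_{(b,d)}(D_{\mathrm{Ver},q}^\kappa(Z))=\mathrm{mult}_{(b,d)}(D_{\mathrm{Ver},q}^\kappa(Y))+\mathrm{mult}_{(b,d)}(D_{\mathrm{Ver},q}^\kappa(\Xi'))\ge\mathrm{mult}_{(b,d)}(D_{\mathrm{Ver},q}^\kappa(Y))+1$, which is condition~(ii). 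Since $n\ge M$ and $X$ were arbitrary, $\Xi'$ is an $A$‑marker in $\Lambda_n$ for all $n\ge M$.

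The step I expect to be the main obstacle is the last one: making rigorous that grafting the far‑away set $Y$ onto $\Xi'$ leaves the portion of the verbose diagram with death value $\le D^*$ entirely undisturbed. This hinges on three things that must be arranged with care—a threshold $D^*$ that is uniform over all admissible $\Xi'$ and dominates every $\kappa$‑value able to influence the chosen point $(b,d)$; the separation estimate via (K3) that produces the disjoint‑union structure of $\K^\kappa(Z)$ below $D^*$ (which is exactly what dictates how large $M$ must be, and where the growth assumption on the averaging sequence is needed); and the conversion of the resulting direct‑sum linear algebra into an identity for the multiplicity of the single point $(b,d)$, where Propositions~\ref{prop: persbettimeasure} and~\ref{prop: cycleboundary} do the work and the boundary cases $b=0$ and $d=0$ must be checked by hand.
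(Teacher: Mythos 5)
Your proposal is correct, and its geometric half coincides with the paper's: you bound the relevant $\kappa$-values by a uniform constant ($D^*$, playing the role of the paper's $L=\kappa(\Xi)+c_\kappa\varepsilon$), choose $M$ so that $\Xi'$ is separated from $\Lambda_n^c$ by more than $\rho(D^*)$ (the paper requires $d(\Xi,\Lambda_n^c)>\rho(L)+\varepsilon$; both arguments share the same implicit use of the fact that the windows eventually engulf a neighborhood of $\Xi$), and deduce via (K1)+(K3) that for $t\le D^*$ the complex on $(X\setminus\Lambda_n)\sqcup\Xi'$ is the disjoint union of the two subcomplexes --- this is exactly the paper's \emph{Claim 3}, and your bound $d\le D^*$ is its \emph{Claim 1}. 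Where you genuinely diverge is the final bookkeeping step. The paper truncates the filtration at level $L$ and argues at the level of diagrams: it uses the identity \eqref{eq: timelimit} relating points with $d\le L$ to the verbose diagram of the truncated filtration, together with the splitting \eqref{eq: disjoint union} of the verbose diagram of a disjoint union, to get equality of the measures on every Borel set below the line $y\le L$ (its \emph{Claim 2}). You instead stay with the untruncated filtration, note that the cycle and boundary groups split componentwise for parameters $\le D^*$, deduce additivity of the extended persistent Betti numbers from Equation~\eqref{eq: persbetti}, and then extract the multiplicity of the single point $(b,d)$ by the inclusion--exclusion formula coming from Proposition~\ref{prop: persbettimeasure} (with Corollary~\ref{cor: cycleboundarymeasure} covering $d=0$ and the $b=0$ adjustment); this is valid since $b\le d\le D^*$ and the diagrams are finite, so a sufficiently small $\epsilon$ exists. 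Your route is somewhat more elementary --- it avoids truncated FCCs and any implicit singular-value-decomposition argument behind \eqref{eq: disjoint union}, leaning instead on the already-established extended fundamental lemma --- at the price of the small-$\epsilon$ box argument and the corner cases; the paper's route yields the stronger statement that the entire sub-diagram $\{d\le L\}$ splits, not just the multiplicity at $(b,d)$. Either way condition (ii) of the marker definition follows (in fact with equality), so your proof is complete up to the routine details you flag.
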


\begin{proof}
    We use the fact that, for sufficiently large \(n\), \(\Xi'\) lies far from \(\Lambda_n^c\).
    We begin by specifying 
    \(M\).
    By Definition~\ref{def:filtration function} (K3), there exists an increasing function \(\rho: [0, \infty] \to [0, \infty]\) with \(\rho(t) < \infty\) for all finite \(t\), such that
    \(
        \|x-y\| \leq \rho(\kappa(\{x,y\}))
    \)
    for all \(x,y \in \R^N\). Let \(L = \kappa(\Xi) + c_\kappa \varepsilon\).
    Let \(M\) be a sufficiently large natural number so that, for each \(n \geq M\), we have
    \begin{equation} \label{eq: selection of n}
        d(\Xi, \Lambda_n^c) := \inf \{\|x-y\| : x \in \Xi,\, y \in \Lambda_n^c\} > \rho(L) + \varepsilon.
    \end{equation}

    Let \((b,d) \in D_{\mathrm{Ver},q}^\kappa(\Xi') \cap A\) and \(X \in \mathscr{F}(\R^N)\), and fix any \(n \geq M\).
    Since \(\xi_{\mathrm{Ver},q}^\kappa(\Xi')(\{(b,d)\}) \geq 1\), it suffices to show that
    \begin{equation} \label{eq: bdinequality}
        \xi_{\mathrm{Ver},q}^\kappa((X \setminus \Lambda_n) \sqcup \Xi')(\{(b,d)\})= \xi_{\mathrm{Ver},q}^\kappa(X \setminus \Lambda_n)(\{(b,d)\}) + \xi_{\mathrm{Ver},q}^\kappa(\Xi')(\{(b,d)\}).
    \end{equation}
    This equality follows from the two claims below.
    
    \noindent\textit{Claim 1.} \(d < L\).
    
    \noindent\textit{Claim 2.} For any Borel set \(B \subset \{(x, y) \in \Delta': y \leq L\}\),
    \[
        \xi_{\mathrm{Ver},q}^\kappa((X \setminus \Lambda_n) \sqcup \Xi')(B) 
        = \xi_{\mathrm{Ver},q}^\kappa(X \setminus \Lambda_n)(B) 
        + \xi_{\mathrm{Ver},q}^\kappa(\Xi')(B).
    \]
    
    By \textit{Claim 1}, \(\{(b, d)\}\) is a subset of \(\{(x, y) \in \Delta': y \leq L\}\), which is clearly Borel.
    Then \textit{Claim 2} gives the equality given in Equation~\eqref{eq: bdinequality}.

    Now, we prove \textit{Claim 1}.
    Since $\kappa$ is  $c_{\kappa}$-Lipschitz and $d_H(\Xi,\Xi')<\varepsilon$, we have that \(\kappa(\Xi') \leq \kappa(\Xi) + c_\kappa \varepsilon = L\).
    Since \((b, d) \in A\) implies \(d < \infty\), there exists a \((q+1)\)-chain \(y\) in the FCC \(\mathcal{C}(\K^\kappa(\Xi'))\) with \(\ell_{\K^\kappa(\Xi')}(y)=d\).
    Every simplex in \(y\) lies in \(\Xi'\), and hence, by Equation~\eqref{eq: vectfilt},
    \[
        d = \ell_{\K^\kappa(\Xi')}(y) \leq \kappa(\Xi') \leq L.
    \]

    Next, we prove \textit{Claim 2}.
    The statement holds if and only if the following equality holds
    \begin{multline*} 
        \{(b, d) \in D_{\mathrm{Ver}, q}^\kappa((X \setminus \Lambda_n) \sqcup \Xi'): d \leq L\} \\ 
        = \{(b, d) \in D_{\mathrm{Ver}, q}^\kappa(X \setminus \Lambda_n): d \leq L\} \sqcup \{(b, d) \in D_{\mathrm{Ver}, q}^\kappa(\Xi'): d \leq L\}.
    \end{multline*}
    To show this, we exploit another claim that will be proved at the end of the proof:

    \noindent\textit{Claim 3.} For \(t \leq L\),
    the simplicial complex \(K^\kappa((X \setminus \Lambda_n) \sqcup \Xi', t)\)  (cf. Equation~\eqref{eq: sublevel}) is the disjoint union of \(K^\kappa(X \setminus \Lambda_n, t)\) and \(K^\kappa(\Xi', t)\).

    For a filtration \(\K = \{K_t\}_{t \in [0, \infty)}\) of a finite simplicial complex and \(t_0 > 0\), denote by \(\K_{\leq t_0}\) the filtration \(\{K_t'\}_{t \in [0, \infty)}\) given by
    \[
        K_t' =
        \begin{cases}
            K_t, &\quad \text{if } t \leq t_0, \\
            K_{t_0}, &\quad \text{if } t > t_0.
        \end{cases}
    \]
    It is not difficult to show that, as multisets,
    \begin{equation} \label{eq: timelimit}
        \{(b, d) \in D_{\mathrm{Ver}, q}(\mathcal{C}(\K_{\leq t_0})): d < \infty\} = \{(b, d) \in D_{\mathrm{Ver}, q}(\mathcal{C}(\K)): d \leq t_0\}.
    \end{equation}
    Also, by \textit{Claim 3}, we have
    \begin{equation} \label{eq: disjoint union}
        D_{\mathrm{Ver}, q}(\mathcal{C}((\K^\kappa((X \setminus \Lambda_n) \sqcup \Xi'))_{\leq L})) = D_{\mathrm{Ver}, q}(\mathcal{C}((\K^\kappa(X \setminus \Lambda_n))_{\leq L})) \sqcup D_{\mathrm{Ver}, q}(\mathcal{C}((\K^\kappa(\Xi'))_{\leq L})).
    \end{equation}
    Then we have
    \begin{align*}
        &\{(b, d) \in D_{\mathrm{Ver}, q}^\kappa((X \setminus \Lambda_n) \sqcup \Xi'): d \leq L\} \\ 
        &= \{(b, d) \in D_{\mathrm{Ver}, q}(\mathcal{C}((\K^\kappa((X \setminus \Lambda_n) \sqcup \Xi'))_{\leq L})): d < \infty\} & \text{by Equation~\eqref{eq: timelimit}}\\
        &= \{(b, d) \in D_{\mathrm{Ver}, q}(\mathcal{C}((\K^\kappa(X \setminus \Lambda_n))_{\leq L})): d < \infty\} \\
        &\qquad \sqcup \{(b, d) \in D_{\mathrm{Ver}, q}(\mathcal{C}((\K^\kappa(\Xi'))_{\leq L})): d < \infty\}& \text{by Equation~\eqref{eq: disjoint union}}\\
        &= \{(b, d) \in D_{\mathrm{Ver}, q}^\kappa(X \setminus \Lambda_n): d \leq L\} \sqcup \{(b, d) \in D_{\mathrm{Ver}, q}^\kappa(\Xi'): d \leq L\}& \text{by Equation~\eqref{eq: timelimit}}
    \end{align*}
    as desired.

    Finally, we prove \textit{Claim 3}.
    Since \(d_H(\Xi,\Xi') < \varepsilon\), Equation~\eqref{eq: selection of n} implies that
    \begin{equation} \label{eq: far enough}
        d(\Xi', \Lambda_n^c) > \rho(L).
    \end{equation}
    Suppose that
    \(\sigma \subset (X \setminus \Lambda_n) \sqcup \Xi'\)
    intersects both \(X \setminus \Lambda_n\) and \(\Xi'\).
    It suffices to show that
    \(L < \kappa(\sigma)\) because this implies that at time \(t\leq L\), the simplices present in \(\K^\kappa((X \setminus \Lambda_n) \sqcup \Xi')\) are exactly those present either in \(\K^\kappa(X \setminus \Lambda_n)\) or in \(\K^\kappa(\Xi')\).
    
    Let \(x \in \sigma \cap (X \setminus \Lambda_n)\) and \(y \in \sigma \cap \Xi'\).
    Then we have \(\rho(L) < \|x - y\|\) by Equation~\eqref{eq: far enough}.
    By Definition~\ref{def:filtration function} (K3), we have \(\|x - y\| \leq \rho(\kappa(\{x,y\}))\).
    Also, by Definition~\ref{def:filtration function} (K1), we have \(\rho(\kappa(\{x,y\})) \leq \rho(\kappa(\sigma))\).
    In sum, we have \(\rho(L) < \rho(\kappa(\sigma))\) and hence \(L < \kappa(\sigma)\).
\end{proof}

For $p\in\Delta'$ and $\varepsilon> 0$, let \(B(p, \varepsilon)\) denote the open ball of radius \(\varepsilon\) centered at \(p\), with respect to the Euclidean distance. The following lemma, an extension of \cite[Theorem~4.7]{hiraokaetal2018}, will play a crucial role in the proof of Theorem~\ref{thm: measuresupportverbose} below.

\begin{lemma} \label{lem: subsetofsupport}
    Fix a filtration function \(\kappa: \mathscr{F}(\R^N) \to [0, \infty)\) and a point process \(\Phi\) on \(\R^N\). 
    For \(n \geq 1\), let \(\Lambda_n := [-n/2, n/2)^N\).
    For \(p = (b, d) \in \Delta'\) with \(d < \infty\), any integer \(q \geq 0\), and any real number \(\varepsilon > 0\), consider the event
    \[
        A_{q, \varepsilon, p} := \bigcup_{n = 1}^\infty \{\Phi_{\Lambda_n} \text{ is a } B(p, \varepsilon)\text{-marker in } \Lambda_n \text{ for the degree-\(q\) verbose diagram of } \kappa\}.
    \]
    Then, for the set
    \begin{equation} \label{eq: sq}
        S_q := \bigcap_{\varepsilon > 0} \{(b, d) \in \Delta' : d < \infty,\, \mathbb{P}(A_{q, \varepsilon, (b, d)}) > 0\},
    \end{equation}
    we have \(S_q \subset \mathrm{supp}(\nu_q')\).
\end{lemma}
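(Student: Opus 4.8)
The plan is to show that every $p=(b_0,d_0)\in S_q$ lies in $\mathrm{supp}(\nu_q')$ by producing, for each $\varepsilon>0$, a lower bound of order $\vol{\Lambda_n}$ for $\mathbb{E}\big[\xi_{\mathrm{Ver},q,\Lambda_n}^\kappa(B(p,\varepsilon))\big]$ and then invoking the convergence $\vol{\Lambda_n}^{-1}\mathbb{E}[\xi_{\mathrm{Ver},q,\Lambda_n}^\kappa]\xrightarrow{v}\nu_q'$ from Theorem~\ref{thm: vagueconvverbose}. Fix $p=(b_0,d_0)\in S_q$, so $d_0<\infty$; it suffices to show $\nu_q'\big(B(p,2\varepsilon)\big)>0$ for every $\varepsilon>0$. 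Fix such an $\varepsilon$. Since $p\in S_q$ we have $\mathbb{P}(A_{q,\varepsilon,p})>0$, and because $A_{q,\varepsilon,p}$ is by definition the countable union over $m\ge 1$ of the events $\{\Phi_{\Lambda_m}\text{ is a }B(p,\varepsilon)\text{-marker in }\Lambda_m\}$, there is some $m$ with
\[
\rho:=\mathbb{P}\big(\Phi_{\Lambda_m}\text{ is a }B(p,\varepsilon)\text{-marker in }\Lambda_m\big)>0.
\]
For $n\ge m$ I would tile (the interior of) $\Lambda_n=[-n/2,n/2)^N$ by $k_n:=\lfloor n/m\rfloor^N$ pairwise disjoint translates $B_{n,1},\dots,B_{n,k_n}$ of $\Lambda_m$ and put $I_n:=\{\,i:\Phi_{B_{n,i}}\text{ is a }B(p,\varepsilon)\text{-marker in }B_{n,i}\,\}$. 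Translation invariance of the verbose diagram (a consequence of (K2)) shows that each event ``$\Phi_{B_{n,i}}$ is a $B(p,\varepsilon)$-marker in $B_{n,i}$'' is the appropriate translate of the event defining $\rho$, so by stationarity it has probability $\rho$ and $\mathbb{E}[|I_n|]=k_n\rho$.

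The key step is the pathwise inequality $\xi_{\mathrm{Ver},q,\Lambda_n}^\kappa(B(p,\varepsilon))\ge|I_n|$, and this is the part I expect to need the most care: the count $\xi_{\mathrm{Ver},q}^\kappa(\cdot)(B(p,\varepsilon))$ is \emph{not} monotone under enlarging the point set, so one cannot simply insert the marker boxes one after another. I would get around this by grouping the boxes of $I_n$ according to the point they force. By definition of an $A$-marker, each $i\in I_n$ comes with a point $v(i)\in B(p,\varepsilon)\cap\Delta'$, depending only on $\Phi_{B_{n,i}}$ and not on any background, such that $\Phi_{B_{n,i}}$ is a $v(i)$-marker in $B_{n,i}$. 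For a fixed value $v\in\{v(i):i\in I_n\}$, enumerate $\{i:v(i)=v\}=\{i_1,\dots,i_s\}$, set $Z_0:=\Phi_{\Lambda_n}\setminus\bigcup_k\Phi_{B_{n,i_k}}$ and $Z_k:=Z_{k-1}\sqcup\Phi_{B_{n,i_k}}$, so that $Z_s=\Phi_{\Lambda_n}$. Since $Z_{k-1}$ is disjoint from $B_{n,i_k}$, one has $Z_k\setminus B_{n,i_k}=Z_{k-1}$, and applying the $v$-marker property of $\Phi_{B_{n,i_k}}$ with background $X=Z_k$ gives $\xi_{\mathrm{Ver},q}^\kappa(Z_k)(\{v\})\ge\xi_{\mathrm{Ver},q}^\kappa(Z_{k-1})(\{v\})+1$; telescoping over $k$ yields $\xi_{\mathrm{Ver},q,\Lambda_n}^\kappa(\{v\})\ge s$. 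Summing this over the finitely many distinct values $v$ (all lying in $B(p,\varepsilon)\cap\Delta'$) and using that $\xi_{\mathrm{Ver},q,\Lambda_n}^\kappa$ is a measure gives $\xi_{\mathrm{Ver},q,\Lambda_n}^\kappa(B(p,\varepsilon))\ge\sum_v|\{i:v(i)=v\}|=|I_n|$. This grouping-by-forced-point device is exactly what renders the non-monotonicity harmless, since it only ever invokes the marker inequality at one fixed point at a time.

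Finally I would take expectations to get $\mathbb{E}[\xi_{\mathrm{Ver},q,\Lambda_n}^\kappa(B(p,\varepsilon))]\ge k_n\rho$, hence $\liminf_{n\to\infty}\vol{\Lambda_n}^{-1}\mathbb{E}[\xi_{\mathrm{Ver},q,\Lambda_n}^\kappa(B(p,\varepsilon))]\ge\rho/m^N>0$ because $k_n/\vol{\Lambda_n}\to m^{-N}$. Since $d_0<\infty$, the set $B(p,\varepsilon)\cap\Delta'$ is relatively compact in $\Delta'$ (its closure is bounded and avoids $(\infty,\infty)$), so Theorem~\ref{thm: vagueconvverbose} together with Lemma~\ref{lem: vagueconvequiv}~(ii) gives
\[
\nu_q'\big(\overline{B(p,\varepsilon)}\big)\ \ge\ \limsup_{n\to\infty}\vol{\Lambda_n}^{-1}\mathbb{E}[\xi_{\mathrm{Ver},q,\Lambda_n}^\kappa(B(p,\varepsilon))]\ \ge\ \frac{\rho}{m^N}\ >\ 0.
\]
As $\overline{B(p,\varepsilon)}\subseteq B(p,2\varepsilon)$ and $\varepsilon>0$ was arbitrary, every open ball around $p$ in $\Delta'$ has positive $\nu_q'$-mass, i.e. $p\in\mathrm{supp}(\nu_q')$; since $p\in S_q$ was arbitrary, $S_q\subseteq\mathrm{supp}(\nu_q')$.
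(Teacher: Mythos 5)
Your proof is correct and follows essentially the same route as the paper's, which simply defers to the argument of \cite[Theorem~4.7]{hiraokaetal2018}: pick a box $\Lambda_m$ on which the marker event has positive probability $\rho$, tile $\Lambda_n$ by disjoint translates, use stationarity and the translation invariance (K2) of $\kappa$ to get $\mathbb{E}[|I_n|]=k_n\rho$, and conclude via Theorem~\ref{thm: vagueconvverbose} and Lemma~\ref{lem: vagueconvequiv}. Your grouping-by-forced-point telescoping is precisely the careful way to justify the pathwise marker-counting bound $\xi_{\mathrm{Ver},q,\Lambda_n}^\kappa(B(p,\varepsilon))\geq|I_n|$, so no gap remains.
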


The proof is essentially the same as that of \cite[Theorem~4.7]{hiraokaetal2018}: From their proof, we obtain a proof simply by 
replacing \(\Delta\) with \(\Delta'\), \(\xi_q\) with \(\xi_{\mathrm{Ver}, q}^\kappa\), \(\nu_q\) with \(\nu_q'\), and ``PH\(_q\)" with ``the degree-\(q\) verbose diagram of \(\kappa\)". 

Before we prove Theorem~\ref{thm: measuresupportverbose}, we give a remark on the distribution of a homogeneous Poisson point process.

\begin{remark}[{\cite[p.~25]{hiraokaetal2018}}] \label{rmk: PPPdistribution}
    Let \(\Lambda \subset \R^N\) be a bounded Borel set. 
    By Proposition~\ref{prop: intvaluedirac}, the set \(\mathscr{N}(\Lambda)\) of all integer-valued Radon measures on \(\Lambda\) can be identified with
    \[
        \left(\bigsqcup_{k = 0}^\infty \Lambda^k\right) / \sim,
    \]
    where \(\sim\) denotes the equivalence relation induced by coordinate permutations and \(\Lambda^0 := \{\varnothing\}\). 
    
    Let \(\Psi\) be a homogeneous Poisson point process on \(\R^N\) with intensity \(\lambda > 0\). 
    Under the above identification, sampling a measure with respect to the distribution \(\Theta_{r_\Lambda(\Psi)}\) can be described as follows:
    \begin{enumerate}[(i)]
        \item Sample a Poisson random variable \(n\) with mean \(\lambda \cdot \vol{\Lambda}\);
        \item Conditional on \(n\), sample \(n\) i.i.d.\ points \((x_1,\dots,x_n)\) uniformly from \(\Lambda\), independently of \(n\);
        \item Take the equivalence class of \((x_1,\dots,x_n)\) under \(\sim\).
    \end{enumerate}
    
    For \(k > 0\), the local Janossy density of \(\Theta_{r_\Lambda(\Psi)}\) on \(\Lambda^k\) (see, e.g., \cite[Definition~5.4.IV]{daley2003introduction}) is given by
    \begin{align*}
        \Theta_{r_\Lambda(\Psi)}(dx_1 \dots dx_k)
        &= k! \cdot \mathbb{P}(n = k) \cdot \frac{1}{\vol{\Lambda}^k}\, dx_1 \dots dx_k \\
        &= \lambda^k e^{-\lambda \cdot \vol{\Lambda}} \, dx_1 \dots dx_k.
    \end{align*}
\end{remark}

We finally prove Theorem~\ref{thm: measuresupportverbose}.

\begin{proof}[Proof of Theorem~\ref{thm: measuresupportverbose}]
    By Lemma~\ref{lem: nomark}~\ref{lemitem: verbosesupport}, it suffices to show that the assumption of Theorem~\ref{thm: measuresupport} implies the conclusion of Theorem~\ref{thm: measuresupportverbose}.
    Suppose that the assumption of Theorem~\ref{thm: measuresupport} holds.
    
    Fix an integer \(q \geq 0\).
    We first show that \(\mathrm{supp}(\nu_q') \subset \overline{R_q'}\).
    If \((b, d) \notin \overline{R_q'}\), then there exists a neighborhood \(U\) of \((b, d)\) such that \(\xi_{\mathrm{Ver}, q, \Lambda_n}^\kappa(U) = 0\) for all \(n\).
    By Lemma~\ref{lem: vagueconvequiv} and Theorem~\ref{thm: vagueconv}, this implies that \(\nu_q'(U) = 0\) and thus \((b, d) \notin \mathrm{supp}(\nu_q')\).

    Next, we show that \(\overline{R_q'} \subset \mathrm{supp}(\nu_q')\).
   Since \(\mathrm{supp}(\nu_q')\) is closed, it suffices to prove that \({R_q'} \subset \mathrm{supp}(\nu_q')\).
    Let \((b, d) \in R_q'\).
    Since \(\kappa\) is finite-valued,
    either  
    (i) \(d < \infty\), or  
    (ii) \(d = \infty\) and \(q = 0\):
    Suppose that \(q > 0\). For any \(\Xi \in \mathscr{F}(\R^N)\), consider the simplicial filtration $\K^\kappa(\Xi)=\{K_t\}_{t\geq 0}$. Then there exists $T>0$ such that $K_t$ is the full simplicial complex on $\Xi$ for all $t\geq T$ in which every cycle is a boundary. This implies that every point \((b,d)\) in \(D_{\mathrm{Ver},q}^\kappa(\Xi)\) satisfies $d<\infty$. 

    First, assume (i) $d<\infty$. In order to prove that \((b, d) \in \mathrm{supp}(\nu_q')\),
    by Lemma~\ref{lem: subsetofsupport}, it suffices to show that \((b, d)\in S_q\) in Equation~\eqref{eq: sq}.
    For that, it suffices to show that $\mathbb{P}(A_{q, \varepsilon, (b, d)}) > 0$ for all \(\varepsilon > 0\).
    Fix $\varepsilon>0$ and let \((b, d)\) be $(\kappa,q)$-realizable by a set \(\{x_1, \dots, x_m\}\subset \R^N\).
    By Lemma~\ref{lem: matchingdistancecontinuity}, there exists \(\delta > 0\) such that for any \(\{z_1, \dots, z_m\} \in \mathscr{F}(\R^N)\) with \(d_H(\{x_1, \dots, x_m\}, \{z_1, \dots, z_m\}) < \delta\), we have
        \[d_M(D_{\mathrm{Ver}, q}^\kappa(\{x_1, \dots, x_m\}), D_{\mathrm{Ver}, q}^\kappa(\{z_1, \dots, z_m\}))
        \leq c_\kappa d_H(\{x_1, \dots, x_m\}, \{z_1, \dots, z_m\}).\]
    There exists \(r > 0\), smaller than both \(\varepsilon / c_\kappa\) and \(\delta\), such that the open balls \(B(x_1, r), \dots, B(x_m, r)\) are pairwise disjoint.
    Pick any \(y_i \in B(x_i, r)\) for each \(i\).
    Then the set \(\{y_1, \dots, y_m\}\) is within Hausdorff distance less than \(r\) from \(\{x_1, \dots, x_m\}\), and the points \(y_1, \dots, y_m\) are all distinct.
    Since \(d_H(\{x_1, \dots, x_m\}, \{y_1, \dots, y_m\}) < r < \delta\), we have    
    \[
        \quad d_M(D_{\mathrm{Ver}, q}^\kappa(\{x_1, \dots, x_m\}), D_{\mathrm{Ver}, q}^\kappa(\{y_1, \dots, y_m\})) 
        \leq c_\kappa d_H(\{x_1, \dots, x_m\}, \{y_1, \dots, y_m\}) 
        < c_\kappa r 
        < \varepsilon.
    \]
    This implies that in the $\varepsilon$-ball centered at \( (b,d)\in D_{\mathrm{Ver}, q}^\kappa(\{x_1, \dots, x_m\})\) in $d_{\infty}$, there exists a point in \(D_{\mathrm{Ver}, q}^\kappa(\{y_1, \dots, y_m\})\). That point is in the \(\sqrt{2} \varepsilon\)-ball centered at $(b,d)$ in Euclidean distance.
    
    Then, by Lemma~\ref{lem: marker}, for sufficiently large \(n\), any such set \(\{y_1, \dots, y_m\}\) is a \(B((b, d), \sqrt{2}\varepsilon)\)-marker in \(\Lambda_n = [-n/2, n/2)^N\).
    Let \(f_{\Lambda_n} := {d\Theta_{r_{\Lambda_n}(\Phi)} / d\Theta_{r_{\Lambda_n}(\Psi)}}\).
    Then, by Remark~\ref{rmk: PPPdistribution}, we obtain
    \begin{align*}
       &\mathbb{P}(A_{q,\sqrt{2}\varepsilon,(b,d)}) \\&=
        \mathbb{P}(\Phi_{\Lambda_n} \text{ is a } B((b,d), \sqrt{2}\varepsilon)\text{-marker in } \Lambda_n \text{ for the degree-\(q\) verbose diagram of } \kappa) \\
        &\geq {\Theta_{r_{\Lambda_n}(\Phi)}} \left( 
            \bigcap_{i=1}^m \left\{ \Phi(B(x_i, \delta)) = 1 \right\}
            \cap 
            \left\{ \Phi\left( \Lambda_n \setminus \bigcup_{i=1}^m B(x_i, \delta) \right) = 0 \right\}
        \right) \\
        &= \int_{B(x_1, \delta) \times \cdots \times B(x_m, \delta)} 
            f_{\Lambda_n}(y_1, \ldots, y_m) \, {\Theta_{r_{\Lambda_n}(\Psi)}}(dy_1 \cdots dy_m) \\
        &= \lambda^m e^{-\lambda\cdot\vol{\Lambda_n}} \int_{B(x_1, \delta) \times \cdots \times B(x_m, \delta)} 
            f_{\Lambda_n}(y_1, \ldots, y_m) \, dy_1 \cdots dy_m \\
        &> 0.
    \end{align*}
     Since \(\varepsilon\) was arbitrary, we have proved that $\mathbb{P}(A_{q,\varepsilon,(b,d)})>0$ for all $\varepsilon>0$.
     
    Next, assume (ii) \(d = \infty\) and \(q = 0\). Let \(U\) be any open neighborhood of \((b, \infty)\).
    In order to prove that \((b, \infty) \in \mathrm{supp}(\nu_0')\), it suffices to show that \(\nu_0'(U) > 0\).
    Choose \(t > 0\) such that \(\{b\} \times [t, \infty] \subset U\).
    By Definition~\ref{def:filtration function} (K3), there exists an increasing function \(\rho: [0, \infty] \to [0, \infty]\) with \(\rho(u) < \infty\) for all finite \(u\), such that \(\|x - y\| \leq \rho(\kappa(\{x,y\}))\) for all \(x, y \in \R^N \).
    Consider any set \(\{x, y\} \in \mathscr{F}(\R^N)\) with \( \rho(t)<\|x - y \| \). Since $\rho$ is increasing, this implies that \(t < \kappa(\{x, y\})\).
    Then we have \((b, \kappa(\{x, y\})) \in \{b\} \times [t, \infty] \subset U\).
    Also, we have \((b, \kappa(\{x, y\})) \in D_{\mathrm{Ver}, q}^\kappa(\{x, y\})\), implying that \((b, \kappa(\{x, y\})) \in R_0'\).
    Since $\kappa(\{x, y\})<\infty$, by case (i), it follows that \((b, \kappa(\{x, y\})) \in \mathrm{supp}(\nu_0')\).
    Since \(U\) is a neighborhood of \((b, \kappa(\{x, y\}))\), we have that \(\nu_0'(U) > 0\), as desired.
  \end{proof}

\section{Conclusions}

We extended the main results of \cite{hiraokaetal2018} and \cite{shirai2022} to the setting of verbose diagrams.
Also, we extended the fundamental lemma of persistent homology to the verbose diagram setting.

We suggest the following directions for future research.
\begin{enumerate}
    \item Asymptotic behavior is just one aspect of studying random verbose diagrams. In fact, many works explore properties of random persistence diagrams beyond their asymptotic behavior, some of which may naturally extend to verbose diagrams, e.g.,
    the study of the density of expected persistence diagrams \cite{chazaldivol2018}
    and the study of probability measures on the space of persistence diagrams 
    \cite{mileyko2011, munch2015}.
    
    \item 
    One may consider statistical analysis of verbose diagrams (VDs) ---e.g., hypothesis testing on sets of VDs, statistical inference based on VDs, or machine learning using vectorized representations of VDs, as an extension of such works on persistence diagrams \cite{bubenik2015, adams2017, Fasyetal2014, robinson2017}.
\end{enumerate}

\printbibliography

\appendix

\section{Proof of Proposition~\ref{prop: intvaluedirac}} \label{apdx: pfofintvaluedirac}

\begin{proof}
    By the atomic decomposition \cite[Lemma~1.6]{kallenberg2017} of \(\mu\),
    there exist a finite or countably infinite indexing set \(I\), positive numbers \((\beta_i)_{i \in I}\), and distinct points \((\sigma_i)_{i \in I}\) in \(S\), along with a \emph{diffuse} measure \(\alpha\) on \(S\)---that is, \(\alpha(\{p\}) = 0\) for any \(p \in S\)---such that
    \[
        \mu = \alpha + \sum_{i \in I} \beta_i \delta_{\sigma_i}.
    \]
    Each \(\beta_i\) must be an integer, since \(\mu(\{\sigma_i\}) = \beta_i\).
    Thus, it remains to prove that \(\alpha = 0\).
    It therefore suffices to show that any integer-valued diffuse Radon measure must be zero.

    For contradiction, suppose that \(\alpha\) is a nonzero integer-valued diffuse Radon measure.
    Since \(S\) is Polish, it admits a metric \(d\) that generates its topology, and there exists a countable dense sequence \(\{p_n\}_n\) in \(S\).
    Consider the sequence of open balls \(\{B(p_n, 1/2)\}_n\) of radius \(1/2\) centered at the points \(p_n\).
    As \(\bigcup_n B(p_n, 1/2) = S\) and \(\alpha(S) > 0\), there exists some index \(i_1\) such that \(\alpha(B(p_{i_1}, 1/2)) > 0\).
    Set \(q_1 := p_{i_1}\).
    
    We now define \(q_2, q_3, \dots\) inductively.
    Suppose that \(q_1, \dots, q_m\) have been chosen.
    For each \(m \geq 1\), observe that the set of points in \(\{p_n\}_n\) contained in \(B(q_m, 2^{-m})\) is dense in \(B(q_m, 2^{-m})\).
    If \(\alpha(B(q_m, 2^{-m})) > 0\), then there exists some index \(i_{m+1}\) such that \(p_{i_{m+1}} \in B(q_m, 2^{-m})\) and \(\alpha(B(p_{i_{m+1}}, 2^{-(m+1)})) > 0\).
    We then define \(q_{m+1} := p_{i_{m+1}}\) and proceed inductively.
    This construction ensures that \(\alpha(B(q_m, 2^{-m})) > 0\) for all \(m \geq 1\).
    
    Note that \(d(q_i, q_{i+1}) < 2^{-i}\) for each \(i\), so the sequence \(\{q_n\}_{n \geq 1}\) is Cauchy.
    Since \(S\) is complete, the sequence converges to some point \(q \in S\).
    For each \(n \geq 1\), define \(V_n := B(q, 2^{-n})\).
    Then, for each \(n\), there exists \(l > n\) such that \(d(q, q_l) < 2^{-(n+1)}\).
    Because \(l \geq n+1\), we have \(B(q_l, 2^{-l}) \subset V_n\), and hence \(\alpha(V_n) > 0\).
    Moreover, since \(\alpha\) is integer-valued, it follows that \(\alpha(V_n) \geq 1\) for all \(n\).
    Now, observe that \(\bigcap_{n \geq 1} V_n = \{q\}\), and by the continuity from above of measures, we obtain \(\alpha(\{q\}) = \lim_{n \to \infty} \alpha(V_n) \geq 1\), contradicting the assumption that \(\alpha\) is diffuse.
\end{proof}

\end{document}